\newtheorem{thm}{Theorem}[section]
\newtheorem{lem}[thm]{Lemma}
\newtheorem{prop}[thm]{Proposition}
\theoremstyle{definition}
\newtheorem*{defx}{Definition}
\theoremstyle{remark}
\newtheorem{rem}[thm]{Remark}
\newtheorem*{xrem}{Remark}
\newtheorem*{ntt}{Notation}
\numberwithin{equation}{section}
\newcommand{\N}{\mathbb{N}}      
\newcommand{\Z}{\mathbb{Z}}      
\newcommand{\Q}{\mathbb{Q}}      
\newcommand{\R}{\mathbb{R}}      
\newcommand{\eps}{\varepsilon}   
\newcommand\restr[2]{{           
  \left.\kern-\nulldelimiterspace #1%
  \right|_{#2}%
 }}
\newcommand\minus{               
  \setbox0=\hbox{-}%
  \vcenter{%
    \hrule width\wd0 height \the\fontdimen8\textfont3%
  }%
}
\newcommand{\bigslant}[2]{       
  {\raisebox{.2em}{$#1$} \left/ \raisebox{-.2em}{$#2$} \right.}}
\newcommand{\Var}{\mathrm{Var}}  
\newcommand{\A}{\mathscr{A}}
\newcommand{\B}{\mathscr{B}}
\newcommand{\C}{\mathscr{C}}
\newcommand{\rhohat}{\widehat{\rho}}
\newcommand{\mupp}{\mathcal{M}^{*}}
\newcommand{\mlow}{\mathcal{M}_{*}}
\tikzset{                        
    symbol/.style={%
        ,draw=none
        ,every to/.append style={%
            edge node={node [sloped, allow upside down, auto=false]{$#1$}}}
    }
}
\newcommand{\uptxt}[2]{          
  \mathrel{\overset{\makebox[0pt]{\mbox{\normalfont\tiny #1}}}{#2}}}
\newcommand{\negphantom}[1]{\settowidth{\dimen0}{#1}\hspace*{-\dimen0}}
\begin{document}

\title{An extension of the Erd\H{o}s--Tetali theorem}%
\author{Christian T\'afula}%
\address{RIMS, Kyoto University, 606-8502 Kyoto, Japan}%
\email{tafula@kurims.kyoto-u.ac.jp}%

\subjclass[2010]{Primary 11B13, 11B34; Secondary 05D40}%
\keywords{Erd\H{o}s--Tetali theorem, economical bases, probabilistic method, representation functions, $O$-regular variation.}%

\begin{abstract}
 Given a sequence $\A=\{a_0<a_1<a_2\ldots\}\subseteq \N$, let $r_{\A,h}(n)$ denote the number of ways $n$ can be written as the sum of $h$ elements of $\A$. Fixing $h\geq 2$, we show that if $f$ is a suitable real function (namely: locally integrable, $O$-regularly varying and of positive increase) satisfying
 \[ x^{1/h}\log(x)^{1/h} \ll f(x) \ll \frac{x^{1/(h-1)}}{\log(x)^{\eps}} \text{ for some } \eps > 0, \]
 then there must exist $\A\subseteq\N$ with $|\A\cap [0,x]|=\Theta(f(x))$ for which $r_{\A,h+\ell}(n) = \Theta(f(n)^{h+\ell}/n)$ for all $\ell \geq 0$. Furthermore, for $h=2$ this condition can be weakened to $x^{1/2}\log(x)^{1/2} \ll f(x) \ll x$.
 
 The proof is somewhat technical and the methods rely on ideas from regular variation theory, which are presented in an appendix with a view towards the general theory of additive bases. We also mention an application of these ideas to Schnirelmann's method.
\end{abstract}
\maketitle

\section{Introduction}
 Denote by $\N$ the set of natural numbers with $0$. Given a sequence $\A=\{a_0<a_1<a_2\ldots\}\subseteq \N$ and an integer $h\geq 2$, the \emph{$h$-fold sumset} $h\A$ is the set $\{\sum_{i=1}^h k_i : k_1,\ldots,k_h \in \A \}$. We say that $\A$ is an \emph{additive basis} when there is $h\geq 2$ such that $\N\setminus h\A$ is empty, and the least such $h$ is the \emph{order} of $\A$. An \emph{$h$-basis} is an additive basis of order $h$. The \emph{representation functions} $r_{\A,h}(n)$ and $s_{\A,h}(x)$ count the number of solutions of $k_1+k_2+\ldots+k_h = n$ and $k_1+k_2+\ldots+k_h \leq x$ resp., where $h\geq 1$ is fixed and $k_i \in \A$, considering permutations in the sense of the formal series:
 \begin{equation}
  \left(\sum_{a\in \A} z^a\right)^h = \sum_{n\geq 0} r_{\A,h}(n)z^n, \quad \frac{\left(\sum_{a\in \A} z^a\right)^h}{1-z} = \sum_{n\geq 0} s_{\A,h}(n)z^n. \label{forms}
 \end{equation}
 When $h=1$, we denote $r_{\A,1}(n)$, $s_{\A,1}(n)$ by $\mathbbm{1}_{\A}(n)$ and $A(n)$ resp..
 
 An $h$-basis is said to be \emph{economical}\footnote{There appears to be conflicting descriptions of \emph{economical} and \emph{thin} $h$-bases in the literature. We believe that the most satisfactory definitions are the following: an $h$-basis $\A\subseteq \N$ is \emph{thin} when $A(x) = \Theta(x^{1/h})$ (cf. Nathanson \cite{nat12}), and \emph{economical} when $r_{\A,h}(n)\ll_\eps n^{\eps}$ (cf. Chapter III, p. 111 of Halberstam \& Roth \cite{halberstam83}). Note that not all economical bases are thin, and, a priori, thin bases are not necessarily economical. In spite of that, economical $h$-basis always satisfy $A(x) = x^{1/h +o(1)}$.} when $r_{\A,h}(n) \ll_{\eps} n^{\eps}$, that is, $r_{\A,h}(n) = O(n^{\eps})$ for every $\eps >0$. The study of such objects dates back to S. Sidon, who in the 1930s inquired about the existence of economical $2$-basis (cf. Erd\H{o}s \cite{erdo56}). A positive answer was given by Erd\H{o}s in the 1950s by means of probabilistic (therefore non-constructive) methods, showing the existence of $\A\subseteq\N$ with $r_{\A,2}(n) = \Theta(\log(n))$. A constructive proof was later found in the 1990s (cf. Kolountzakis \cite{kol95}). Also in the 1990s, Erd\H{o}s and Tetali \cite{erdtet90} settled the $h\geq 3$ case, showing for every such $h$ the existence of an $h$-basis $\A\subseteq\N$ with $r_{\A,h}(n) = \Theta(\log(n))$. The strategy behind their proof can be outlined in three main steps:
 
 \begin{enumerate}[I.]
  \item Define a random sequence $\omega\subseteq\N$ by
  \[ \Pr(n\in \omega) = \frac{f(n)}{n+1}, \]
  where $f(x) := Mx^{1/h}\log(x)^{1/h}$ for some large constant $M>1$; \label{stepI}
  
  \item Show that the expected value of $r_{\omega,h}(n)$ is $\Theta(f(n)^h/n)$; \label{stepII}
  
  \item \label{stepIII} Show that $r_{\omega,h}(n)$ concentrates around its mean. That is,
  \[ \Pr\big(\exists c_1,c_2>0 : \forall n\in\N,~ c_1\mathbb{E}(r_{\omega,h}(n)) \leq r_{\omega,h}(n) \leq c_2\mathbb{E}(r_{\omega,h}(n)) \big) = 1. \] 
 \end{enumerate}
 
 The third step is by far the most involved. The case $h=2$ is rather special, for $r_{\omega,2}(n)$ can be written as a sum of independent Bernoulli trials, hence the third step may be tackled by using Chernoff-type bounds.\footnote{As in Section 8.6, p. 139 of Alon \& Spencer \cite{alon16}.} This is not the case for $h\geq 3$. The way Erd\H{o}s and Tetali worked around this limitation was by showing that the events counted by $r_{\omega,h}(n)$ have low correlation. Vu's alternative proof in Section 5 of \cite{vvu00mc} uses the same principle, albeit with a much more powerful two-sided concentration inequality. In this paper we take each of these aforementioned steps and extend their reach, being relatively faithful to the methods from Erd\H{o}s \& Tetali \cite{erdtet90}. Our studies culminate in the following:
 
 \theoremstyle{plain}
 \newtheorem*{mainthm}{Main Theorem}
 \begin{mainthm}
  Let $h\geq 2$ be fixed and $x_0\in\R_{+}$ arbitrarily large. For any locally integrable, positive real function $f:[x_0,+\infty)\to \R_{+}$ such that
  \begin{enumerate}[(i)]
   \item $\displaystyle{\int_{x_0}^x \frac{f(t)}{t}\mathrm{d}t = \Theta(f(x)) }$ \label{cond1},
   \item $\displaystyle{x^{1/h}\log(x)^{1/h} \ll f(x) \ll \frac{x^{1/(h-1)}}{\log(x)^{\eps}}}~$ for some $\eps > 0$; $\phantom{\displaystyle{\int_{x_0}^x}}$ \label{cond2}
  \end{enumerate}
  there is an $h$-basis $\A$ with $A(x) = \Theta(f(x))$ such that
  \[ r_{\A,h+\ell}(n)= \Theta\left(\frac{f(n)^{h+\ell}}{n}\right), \text{ for every } \ell\geq 0. \]
  Furthermore, when $h=2$ the ``$\log(n)^{\eps}$'' term can be dropped.
 \end{mainthm}

 One way of thinking about the condition \eqref{cond2} is that while $x^{1/h}\log(x)^{1/h}$ works as a natural lower bound for the methods used by Erd\H{o}s and Tetali,\footnote{So much so that Erd\H{o}s \cite{erdo56} inquired on the existence of $2$-bases $\B$ with $r_{\B,2}(n) \neq o(\log(n))$, a question that naturally extends to $h$-bases in general and is still open.} $x^{1/(h-1)}$ is a natural upper bound. When $h\geq 3$, however, the term $\log(n)^{\eps}$ pops up because $r_{\B,h-1}(n)$ is not necessarily bounded. These thoughts shall be made clear in our study of Step \ref{stepIII}, which constitutes Section \ref{sec4}. Condition \eqref{cond1} relates to Steps \ref{stepI} and \ref{stepII}. In Appendix \ref{appA} we show that when $f(x) = A(x)$ for some $\A\subseteq\N$, this condition is satisfied if and only if $\A$ is an \emph{$O$-regular plus} (OR$_{+}$) \emph{sequence}, which are sequences such that $A(2x) = O(A(x))$ and $a_{2n} = O(a_{n})$. This appendix is used to explain the idea behind the regularity conditions we assume on sequences in order to achieve our result, with a view towards the general theory of sequences and additive bases, and some applications are given. In Section \ref{sec3} we present a natural way of using an OR$_{+}$ sequence $\A$ to induce a probability measure in the space of sequences so that the expected value of $r_{\omega,h}(n)$ is $\Theta(A(n)^{h}/n)$ for all $h\geq 1$. We will argue that these sequences are the only kind for which one can draw such conclusion.

 \begin{ntt}
  Our use of the asymptotic notations $\Theta,\asymp,O,\ll,o,\sim$ is standard, as well as the ``floor'' and ``ceiling'' functions $\lfloor\cdot\rfloor$, $\lceil \cdot\rceil$. Unless otherwise specified, asymptotic symbols are used assuming $n\to +\infty$ through $\N$ and $x\to +\infty$ through $\R$. When we say ``$P(n)$ \emph{holds for all large} $n$'' we mean that there is $n_0\in\N$ such that $P(n)$ holds for all $n\geq n_0$. Whenever we define a positive real function $f:[x_0,+\infty)\to \R_{+}$, the $x_0\in\R_+$ is to be assumed arbitrarily large; this is just to accommodate functions such as $\log\log\log(x)/\log\log(x)$ without worrying about small $x$.
  
  We use $\Pr$ for the probability measure, $\mathbb{E}$ for expectation and $\Var$ for variance. Given a probability space $(\Omega, \mathcal{F}, \Pr)$, we denote the indicator function of an event $E\in \mathcal{F}$ by $\mathcal{I}_E$. A random variable (abbreviated r.v.) $X:\Omega\to \R$ is said to be a \emph{Bernoulli trial} when $X(\Omega) \subseteq \{0,1\}$. The conditional expectation of a r.v. $X$ given an event $E\in\mathcal{F}$ is denoted by $\mathbb{E}(X|E) := \Pr(E)^{-1}\cdot \int_E X~\mathrm{d}\Pr$.
  
  Finally, whenever we write an asymptotic sign with a superscripted ``a.s.'' we mean that the limit in the definition of that sign holds \emph{almost surely}, i.e. for a subset of $\Omega$ with complement having measure $0$. Please refer to the beginning of Appendix \ref{appA} for the notation regarding \emph{sequences}.
 \end{ntt}

\section{Preliminaries}\label{sec1}
 This section gathers basic preliminary concepts and results that will be used in our arguments; for this reason, everything is unnumbered. The reader may skim through it first and come back as necessary.

\subsection{Probabilistic tools}
 Consider a probability space $(\Omega, \mathcal{F}, \Pr)$. Given a collection $\mathcal{X}$ of r.v.s, denote by $\sigma(\mathcal{X})$ the \emph{$\sigma$-algebra generated by $\mathcal{X}$}, which is the smallest $\sigma$-algebra contained in $\mathcal{F}$ such that all r.v.s $X\in\mathcal{X}$ are measurable. Given a sequence $(X_n)_{n\geq 1}$ of r.v.s, consider an event 
 \[ E \in \bigcap_{N\geq 1} \sigma\left((X_n)_{n\geq N} \right). \]
 When the sequence consists of mutually independent r.v.s, this event is independent of any finite subsequence and is called a \emph{tail event} for the sequence. Such events have either probability $0$ or $1$.\footnote{Kolmogorov's zero--one law (Theorem 2 in Section IV.6, p. 124 of Feller \cite{feller2}).} These observations will be relevant for our discussion in Section \ref{sec2}. In what follows, we present the three essential lemmas from probability theory that shall be used in our considerations.
  
 \theoremstyle{plain}
 \newtheorem*{disjlem}{Disjointness lemma}
 \begin{disjlem}[Lemma 8.4.1, p. 135 of Alon \& Spencer \cite{alon16}]
  Let $\mathcal{E} = \{E_i : i \in I\}$ be a (not necessarily finite) family of events and define $S:= \sum_{E\in\mathcal{E}} \mathcal{I}_{E}$. If $\mathbb{E}(S)<+\infty$, then for all $k\geq 1$ the following holds:
  \[ \Pr\left(\mathcal{D}\subseteq\mathcal{E} \text{ disfam}\footnotemark : |\mathcal{D}| = k \right) \leq \sum_{\substack{\mathcal{J}:\mathcal{J}\subseteq \mathcal{E} \\  \text{disfam, } |\mathcal{J}| = k}} \Pr\left(\bigwedge_{E\in\mathcal{J}} E \right) \leq \frac{\mathbb{E}(S)^k}{k!}. \]
 \end{disjlem}

 \theoremstyle{plain}
 \newtheorem*{chern}{Chernoff bounds}
 \begin{chern}[Theorem 1.8, p. 11 of Tao \& Vu \cite{tao06}]
  Let $x_1,\ldots,x_n$ be mutually independent r.v.s with $|x_i|\leq \lambda,~\forall i\leq n$ for some fixed constant $\lambda>0$. Taking $S:= x_1+\cdots+x_n$, if $\mathbb{E}(S)>0$ then, for every $\eps >0$,
  \[ \Pr\left(\left|\frac{S}{\mathbb{E}(S)}-1\right|\geq \eps \right)\leq 2e^{-\min\{\eps/2, (\eps/2)^2\} \cdot \mathbb{E}(S)/\lambda }. \]
 \end{chern}
  
 \theoremstyle{plain}
 \newtheorem*{corlem}{Correlation inequality}
 \begin{corlem}[Boppana \& Spencer \cite{bopspe89}]
  Suppose $\Omega = \{\omega_1,\omega_2\ldots\}$ is finite and let $\mathcal{R}\subseteq \Omega$ be a random subset given by $\Pr(\omega_k\in \mathcal{R}) = p_k$, these events being mutually independent. Let $\mathfrak{s}_1,\ldots,\mathfrak{s}_n\subseteq \Omega$ be different subsets of $\Omega$ and, respectively, let $E_1,\ldots,E_n$ denote the events ``$\mathfrak{s}_i\subseteq \mathcal{R}$''. Furthermore, assume $\Pr(E_i)\leq 1/2$ for $1\leq i\leq n$. Then:
  \[ \prod_{i=1}^n \Pr(\bar{E}_i) \leq \Pr\left(\bigwedge_{i=1}^n \bar{E}_i\right) \leq \left(\prod_{i=1}^n \Pr(\bar{E}_i)\right) e^{2\Delta}, \]
  where
  \[ \Delta = \sum_{\substack{1 \leq i < j \leq n \\ E_i\cap E_j \neq \varnothing}}\Pr\left(E_i\wedge E_j\right) \]
  and $\bar{E}_i$ is the complement of $E_i$. \footnotetext{A \emph{disjoint family} (abbreviated \emph{disfam}) is a mutually independent subfamily of $\mathcal{E}$.}
 \end{corlem}
 
 It is worth noting that, in Section \ref{sec4}, almost all the of techniques used may be broken down into several instances of the disjointness lemma followed by an application of the Borel--Cantelli lemma.\footnote{cf. Section VIII.3, p. 200 of Feller \cite{feller1}.}
  
\subsection{Exact representation functions}
 Fix an integer $h\geq 2$. Let $n\in\N$ and consider an arbitrary sequence $\A\subseteq\N$. Denote by
 \begin{equation*}
  \mathrm{R}_h(n; \A) := \left\{(k_1,\ldots, k_h) \in \A^h : k_1+\cdots + k_h = n\right\}
 \end{equation*}
 the set of $h$-representations of $n$ in $\A$, writing just $\mathrm{R}_h(n)$ when $\A=\N$. We say an $h$-representation $\mathfrak{R}=(k_1,\ldots,k_h)\in \mathrm{R}_h(n; \A)$ is \emph{exact} when $k_i\neq k_j$ whenever $i\neq j$. Denote the set of exact $h$-representations by
 \begin{equation*}
  \mathrm{ER}_h(n; \A) := \bigslant{\left\{ \mathfrak{R}\in\mathrm{R}_h(n;\A): \mathfrak{R} \text{ is exact} \right\}}{\mathfrak{S}_h},
 \end{equation*}
 where $\mathfrak{S}_h$ is the symmetric group in $h$ symbols; that is, we consider two exact representations to be equal when one can be obtained by a permutation of the other, hence $\mathfrak{R}\in \mathrm{ER}_h(n; \A)$ can be thought of as a set $\{k_1,\ldots,k_h\}$ rather than as an $h$-tuple. Similarly, write simply $\mathrm{ER}_h(n)$ when $\A=\N$. Two $h$-representations $\mathfrak{R}, \mathfrak{R}' \in \mathrm{R}_h(n;\A)$ are \emph{disjoint} when $\mathfrak{R}\cap \mathfrak{R}' = \varnothing$. Thus we define two auxiliary representation functions.
 \begin{itemize}
  \item The \emph{exact representation function}:
  \[ \rho_{\A,h}(n) := \left|\mathrm{ER}_h(n;\A)\right|; \]
 
  \item The \emph{maxdisfam} \emph{exact representation function}:
  \[ \rhohat_{\A,h}(n) := \max\left\{|\mathcal{C}|: \mathcal{C}\subseteq \mathrm{ER}_h(n;\A) \substack{\text{ is a maximal disjoint family} \\ \text{ of exact $h$-representations.}}\right\}. \]
 \end{itemize}
 
 While $r_{\A,h}$ is more convenient for induction purposes, $\rho_{\A,h}$ is easier to work with in probabilistic settings. This is because on the one hand, one has equations \eqref{p1}, while on the other, $\rho_{\A,h}$ may be written as
 \begin{equation*}
  \rho_{\A,h}(n) = \sum_{\substack{k_1+\ldots +k_h=n \\ k_1< \cdots < k_h}} \mathbbm{1}_{\A}(k_1)\cdots\mathbbm{1}_{\A}(k_h).
 \end{equation*}
 The $\rhohat_{\A,h}$ function is more abstract. It is useful in probabilistic settings when accompanied by the disjointness lemma. If $\mathcal{C}_n\subseteq\mathrm{ER}_{h}(n;\A)$ is a maximal disjoint family of exact $h$-representations, then
 \begin{equation*}
  \rho_{\A,h}(n) \leq \sum_{k\in\bigcup\mathcal{C}_n} \rho_{\A,h-1}(n-k) \leq h\cdot\rhohat_{\A,h}(n)\cdot \left(\max_{k\in\bigcup\mathcal{C}_n} \rho_{\A,h-1}(n-k) \right), 
 \end{equation*}
 for $|\bigcup \mathcal{C}_n| \leq h\cdot\rhohat_{\A,h}(n)$. This allows us to derive some rough upper bounds for $\rho_{\A,h}$ by induction, but is a fairly limited method since $\rhohat_{\A,h}(n) \leq n/h$.
 
 \begin{xrem}
  The reader familiar with hypergraphs may interpret these functions as counting hyperedges in $\mathcal{H}_{\A,h}^{(n)} = (\mathcal{V},\mathcal{E})$, where $\mathcal{V}:=\A\cap \{0,1,\ldots,n\}$ and $\mathcal{E}:= \mathrm{ER}_h(n;\A)$. Thus $\rhohat_{\A,h}(n)$ is the cardinality of a maximum matching in $\mathcal{H}_{\A,h}^{(n)}$, which when used with the maximum degree $\Delta_{1}(\mathcal{H}) := \max_{u\in\mathcal{V}} \left| \{e \in\mathcal{E} : u \in e \} \right|$ provides some rough upper bounds for $|\mathcal{E}|$. This is a pictorial analogy to keep in mind, but we will not need to make explicit use of it in this paper. For a closely related problem which requires extensive use of this graph-theoretical language, refer to Warnke \cite{war17}.
 \end{xrem}
 
 Another very useful restriction of the set of representations concerns \emph{lower bounded representations}. Given $x_0 \in \R_{+}$, consider the partitions
 \begin{equation*}
  \begin{split}
   \restr{\mathrm{R}_{h}(n;\A)}{\geq x_0} &:= \{\mathfrak{R}\in\mathrm{R}_{h}(n;\A) : \forall k\in\mathfrak{R}, k\geq x_0\}, \\
   \mathrm{R}_{h}(n;\A)\flat_{x_0} &:= \{\mathfrak{R}\in\mathrm{R}_{h}(n;\A) : \exists k\in\mathfrak{R}, k< x_0\}.
  \end{split}
 \end{equation*}
 This is distinguishing those $h$-representations for which the smallest term is at least as large as $x_0$ from those for which some of its elements are smaller than it. In view of what has been discussed, $\restr{\mathrm{ER}_{h}(n;\A)}{\geq x_0}$, $\mathrm{ER}_{h}(n;\A)\flat_{x_0}$, $\restr{\rho_{\A,h}(n)}{\geq x_0}$, $\rho_{\A,h}(n)\flat_{x_0}$, $\restr{\rhohat_{\A,h}(n)}{\geq x_0}$ and $\rhohat_{\A,h}(n)\flat_{x_0}$ are defined accordingly.
  
\subsection{Regular variation}
 Fundamental to the calculations done in this paper are the notions of $O$-regular variation (Subsection \ref{appA2}) and Matuszewska indices (\eqref{dmatusz} in Subsection \ref{appA3}). These are introduced in Appendix \ref{appA}, with a view towards the theory of \emph{sequences} (in the sense of Halberstam \& Roth \cite{halberstam83}) and the general study of additive bases. The concepts necessary for the proof of the Main Theorem are the definitions of OR, PI and OR$_+$ sequences (Propositions \ref{chrreg}, \ref{chrpi} and \ref{chrorpi}, resp.), as well as what we called \emph{the OR--PI lemma} (Lemma \ref{pilem}).
 
\section{The space of sequences (Step \ref{stepI})}\label{sec2}
 Denote by $\mathcal{S}:=\{\A \subseteq \N : \A \text{ is infinite}\}$ the \emph{space of integer sequences}. Say a real sequence $(\alpha_n)_{n\geq 0}$ is a \emph{sequence of probabilities} when $0\leq \alpha_n \leq 1$ for all $n\in \N$, and say it is \emph{proper} when $\sum_{n\geq 0}\alpha_n$ diverges. A proper sequence of probabilities induces a probability measure on $\mathcal{S}$; we denote by $\langle\mathcal{S}, (\alpha_n)_{n}\rangle$ the space $(\mathcal{S},\mathcal{F},\Pr)$ that satisfies the following:
 \begin{enumerate}[(i)]
  \item The events $E_n:=\{\omega: n\in\omega\}$\footnote{Here and throughout this paper we denote by $\omega$ ($\omega \in \mathcal{S}$) a random sequence following the probability distribution induced by $(\alpha_n)_{n\geq 0}$.} are measurable and $\Pr(E_n) = \alpha_n$;

  \item $\{E_0, E_1, E_2,\ldots\}$ is a collection of mutually independent events;

  \item $\mathcal{F}$ is the $\sigma$-algebra induced by the collection $\{E_0, E_1, E_2, \ldots\}$.
 \end{enumerate}
  
 The requirement for $(\alpha_n)_{n\geq 0}$ to be proper is a necessary and sufficient condition to ensure that this is a well-defined probability space according to the Borel--Cantelli lemmas. Indeed, once infinite Cartesian products of probability spaces are established,\footnote{This may be found in several sources, most notably in Halmos \cite{halmos74} (cf. Section 38).} consider a sequence of independent Bernoulli trials with probability $\alpha_n$ of success, then remove those events in which only a finite number of successes occur and identify it with $\sigma((\mathbbm{1}_\omega(n))_{n\geq 0})$.
 
 \begin{xrem}
  As a side note, the reason we chose to work with $\mathcal{S}$ instead of the entire power set of $\N$ is that we will only deal with infinite sequences. It also has the nice one-to-one correspondence:
  \begin{align*}
   \mathcal{S} &\longrightarrow \R/\Z \\
   \A &\longmapsto \textstyle{\left[\sum_{a\in\A} 2^{-(a+1)}\right]} 
  \end{align*}
  thus probability measures on $\mathcal{S}$ correspond naturally to certain measures on the unit circle. The only $\langle\mathcal{S}, (\alpha_n)_{n}\rangle$ corresponding to a translation-invariant measure in the circle is the one with $\alpha_n = 1/2$ for all $n\in \N$, which coincides with the Borel measure on $\R/\Z$.
 \end{xrem}

 Instances of this construction appear in numerous works, most notably in Erd\H{o}s \& R\'enyi \cite{erdren60}. For a classical treatment on related combinatorial and number-theoretical applications, see Chapter III of Halberstam \& Roth \cite{halberstam83}. Essentially, the idea behind these probability measures is to study sequences with certain prescribed rates of growth. Indeed, it is a consequence of Kolmogorov's three-series theorem (cf. Section IX.9 of Feller \cite{feller2}) that if $(X_n)_{n\geq 0}$ is a sequence of independent r.v.s with $\mathbb{E}(X_n)=0$ and $\sum_{n\geq 0} \Var(X_n) < +\infty$, then $\sum_{n\geq 0} X_n$ converges almost surely. The idea of prescribing rates of growth has then the precise meaning implied by the following variant of the strong law of large numbers.\footnote{This is a well-known variant of the strong law, as it is mentioned in Erd\H{o}s \& R\'enyi \cite{erdren60}. A similar statement is present in Chapter III, \S 11 of Halberstam \& Roth \cite{halberstam83}, but with some unnecessary additional hypotheses. Given the fundamental nature of this result in relation to $\langle\mathcal{S}, (\alpha_n)_n \rangle$, we offer it a short proof just for the sake of completeness.}
  
 \begin{thm}[Strong law]\label{slln}
  If $(\alpha_n)_{n\geq 0}$ is a proper sequence of probabilities, then in $\langle\mathcal{S}, (\alpha_n)_n \rangle$ the following holds:
  \[ W(x) \uptxt{a.s.}{\sim} \mathbb{E}(W(x)), \]
  where $W(x) := |\omega\cap [0,x]|$.
 \end{thm}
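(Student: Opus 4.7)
The plan is to reduce to integer $x$, apply the Chernoff bounds from the preliminaries on a suitably sparse subsequence, and then interpolate using monotonicity of $W$ and $\mathbb{E}(W)$.

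First, both $W(x)$ and $\mathbb{E}(W(x))=\sum_{n\leq x}\alpha_{n}$ are non-decreasing step functions that jump only at integer values of $x$, so it is enough to prove the statement for $x\in\N$. Writing $S(N):=\mathbb{E}(W(N))$, the hypothesis that $(\alpha_{n})_{n}$ is proper gives $S(N)\to +\infty$. Since $W(N)=\sum_{n=0}^{N}\mathbbm{1}_{\omega}(n)$ is a sum of mutually independent Bernoulli trials with $\lambda=1$, the Chernoff bounds yield, for every $\eps>0$,
\[
 \Pr\!\left(\left|\frac{W(N)}{S(N)}-1\right|\geq \eps\right)\;\leq\; 2\,e^{-c_{\eps}\,S(N)},\qquad c_{\eps}:=\min\{\eps/2,(\eps/2)^{2}\}.
\]

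Next, I would extract a sparse subsequence along which the right-hand side is summable. Because $S$ is non-decreasing with jumps of size at most $1$ and tends to infinity, for each integer $k\geq 1$ the index $N_{k}:=\min\{N\in\N:S(N)\geq k\}$ is well defined and satisfies $k\leq S(N_{k})<k+1$. Evaluating the Chernoff bound at $N=N_{k}$ gives a geometric tail $\sum_{k}2\,e^{-c_{\eps}k}<\infty$, so the first Borel--Cantelli lemma yields $W(N_{k})/S(N_{k})\to 1$ almost surely for each fixed $\eps$; intersecting the corresponding full-measure events over a countable sequence $\eps\downarrow 0$ gives full almost sure convergence along $(N_{k})_{k}$.

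Finally, interpolate by monotonicity. For $N_{k}\leq N<N_{k+1}$, since both $W$ and $S$ are non-decreasing,
\[
 \frac{W(N_{k})}{S(N_{k+1})}\;\leq\;\frac{W(N)}{S(N)}\;\leq\;\frac{W(N_{k+1})}{S(N_{k})},
\]
while $S(N_{k+1})/S(N_{k})\leq (k+2)/k\to 1$. Combined with the previous step, both extreme ratios tend to $1$ almost surely, which finishes the proof.

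The only delicate point is that a direct application of Chernoff plus Borel--Cantelli to \emph{every} integer $N$ would fail whenever $S(N)$ grows very slowly (for instance $S(N)\asymp\log\log N$), because then $e^{-c_{\eps}S(N)}$ is not summable in $N$. The sparse-subsequence trick, which is standard in this setting, sidesteps the issue at the cost of the harmless final interpolation; no other step should present any real obstacle.
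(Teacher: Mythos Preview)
Your argument is correct. It takes a genuinely different route from the paper's proof. The paper normalizes each summand by defining $X_n=(\mathbbm{1}_\omega(n)-\alpha_n)/\sum_{k\le n}\alpha_k$, telescopes to show $\sum_n\Var(X_n)<\infty$, invokes Kolmogorov's convergence theorem to get $\sum_n X_n$ convergent almost surely, and then recovers $W(x)-\mathbb{E}(W(x))=o(\mathbb{E}(W(x)))$ by Abel summation. You instead apply the Chernoff bound to the whole partial sum $W(N)$, pass to the level-set subsequence $N_k$ to make the tail summable, and squeeze by monotonicity. Your approach has the advantage of being entirely self-contained within the paper's stated toolkit (Chernoff plus Borel--Cantelli), whereas the paper's proof appeals to Kolmogorov's theorem, mentioned in the surrounding text but not among the preliminaries. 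The paper's route, on the other hand, is closer in spirit to the classical Kolmogorov-style strong law and does not require an exponential concentration inequality---it would go through verbatim with any independent summands having uniformly bounded second moments.
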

 \begin{proof}
  First, notice that $\mathbb{E}(W(x)) = \sum_{n\leq x} \alpha_n$. Letting $n_0$ be the smallest number for which $\alpha_{n_0}>0$, define the r.v.s
  \[ X_n := \frac{\mathbbm{1}_{\omega}(n)-\alpha_n}{\sum_{k\leq n} \alpha_k}, \]
  for all $n\geq n_0$, with $X_n \equiv 0$ for $n<n_0$. Since these are just linear transformations being applied to each $\mathbbm{1}_{\omega}(n)$, the r.v.s $X_n$ are still mutually independent. By routine calculations, it follows that $\mathbb{E}(X_n)=0$ and $\Var(X_n) = (\alpha_n - {\alpha_n}^2 )/(\sum_{k\leq n} \alpha_k)^2$. Note that
  \[ \frac{\alpha_n - \alpha_n^2}{\left(\sum_{k\leq n} \alpha_k \right)^2} \leq \frac{1}{\sum_{k< n} \alpha_k } - \frac{1}{\sum_{k\leq n} \alpha_k}, \]
  hence we may telescope the sum $\sum_{n\geq n_0} \Var(X_n)$ and thus it converges.
  
  By the aforementioned theorem of Kolmogorov, it follows that $\sum_{n\geq 0} X_n$ converges almost surely to, say, $C\geq 0$. Then, for each $n\geq -1$, define $\eps_{n} := C - \sum_{k\leq n} X_k$. Note that $X_k = \eps_{k-1} - \eps_{k}$, thus by partial summation we get
  \begin{align*}
   W(x) - \sum_{n\leq x} \alpha_{n} &= \sum_{n\leq x} X_n \mathbb{E}(W(n)) \\
   &= \sum_{n\leq x} \eps_{n}\alpha_{n+1} + C\cdot \mathbb{E}(W(0)) - \eps_{\lfloor x\rfloor}\cdot\mathbb{E}(W(x)).
  \end{align*}
  Since $\eps_n \uptxt{a.s.}{=} o(1)$, the last term in the RHS is a.s. $o(\mathbb{E}(W(x)))$. Recall that $\sum_{n\leq x} \alpha_{n}$ diverges, thus the first term is also a.s. $o(\mathbb{E}(W(x)))$, hence the theorem is proved.
 \end{proof}
  
 From the many types of tail events in $\langle \mathcal{S}, (\alpha_n)_n\rangle$ one then might study, we focus on the behavior of the r.v.s $r_{\omega,h}(n)$ and $s_{\omega,h}(n)$.

\section{The fundamental lemma (Step \ref{stepII})}\label{sec3}
 Recall that our goal in this step is to describe $\mathbb{E}(r_{\omega,h}(n))$ in a simple way. If we want to have $|\omega\cap[0,x]| \uptxt{a.s.}{=} \Theta(f(x))$ in $\langle\mathcal{S}, (\alpha_n)_{n}\rangle$ for some $f$, by the strong law it is necessary and sufficient that $\sum_{n\leq x} \alpha_n = \Theta(f(x))$. If we assume $f$ to be differentiable, one could then choose $\alpha_n$ to be something like $f'(n)$, so that everything is expressed in terms of $f$. Alternatively, in view of the OR--PI lemma (Lemma \ref{pilem}), we could just assume $f$ to be $O$-regularly varying and having positive increase, so that $\alpha_n$ can be chosen to be something like $f(n)/n$. This is actually preferable, for it can be formulated in a very clean way:

 \begin{defx}[The space $\mathcal{S}_{\A}$]
  When $\A$ is an OR$_{+}$ sequence, let $\mathcal{S}_{\A}$ denote the space $\langle\mathcal{S}, (\alpha_n)_{n}\rangle$ with
  \[ \alpha_{n} := \frac{A(n)}{n+1}. \]
 \end{defx}

 This definition only makes sense when $\A$ is OR$_{+}$ (Proposition \ref{chrorpi}),\footnote{Hence whenever we write $\mathcal{S}_{\A}$ in this paper, $\A$ is being assumed OR$_{+}$.} for it is the only case in which this construction yields $|\omega \cap [0,x]| \uptxt{a.s.}{\asymp} A(x)$. Not only that, but since $\A$ is, in particular, OR (Proposition \ref{chrreg}), it follows that $s_{\omega,h}(x) \uptxt{a.s.}{\asymp} A(x)^h$ for all $h\geq 1$. If one has a suitable function $f$ and still wants $\Theta(f(x))$ as the prescribed rate of growth, it is then sufficient to obtain a sequence $\A\subseteq\N$ with $A(x) = \Theta(f(x))$. Moreover, in this setting, the expected result holds. We first need the following lemma.
 
 \begin{lem}\label{lko}
  Let $\A\subseteq \N$, $h\geq 1$, $n\in\N$ and $k \geq 0$. Then
  \[ r_{\A,h}(n) \leq r_{\A\cup\{k\}, h}(n) \leq r_{\A,h}(n)+\binom{h}{\lfloor h/2\rfloor}\sum_{\ell=1}^{h-1} r_{\A,h-\ell}(n-\ell k) + \delta_{n/h}(k), \]
  where $\delta_{n/h}(k) = 1$ if $k=n/h$ and $0$ otherwise.
 \end{lem}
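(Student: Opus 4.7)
The plan is to interpret both sides combinatorially as counts of ordered $h$-tuples with coordinates in a given set, and to stratify the tuples counted by the right-hand side according to how many of their coordinates equal the newly added element $k$.

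First I would observe that the lower bound is immediate from the inclusion $\A\subseteq \A\cup\{k\}$: every ordered tuple in $\A^h$ summing to $n$ is also an ordered tuple in $(\A\cup\{k\})^h$ summing to $n$. I would then dispose of the trivial case $k\in\A$ — both sides reduce to $r_{\A,h}(n)$ — and assume from here on that $k\notin\A$, so that the value $k$ can be cleanly distinguished from elements of $\A$.

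For the upper bound, partition the set of ordered tuples $(k_1,\ldots,k_h)\in(\A\cup\{k\})^h$ with $k_1+\cdots+k_h=n$ according to the number $\ell\in\{0,1,\ldots,h\}$ of indices $i$ with $k_i=k$. Since $k\notin\A$, this is a genuine partition. The stratum $\ell=0$ contributes exactly $r_{\A,h}(n)$. For $1\leq \ell\leq h-1$, one first chooses the $\ell$ positions to be occupied by $k$ in $\binom{h}{\ell}$ ways, then fills the remaining $h-\ell$ coordinates by an element of $\A^{h-\ell}$ summing to $n-\ell k$, so this stratum contributes $\binom{h}{\ell}\, r_{\A,h-\ell}(n-\ell k)$. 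The degenerate case $\ell=h$ corresponds to the single tuple $(k,\ldots,k)$, which appears iff $hk=n$, contributing exactly $\delta_{n/h}(k)$. Summing these contributions and bounding each binomial coefficient by its maximum $\binom{h}{\lfloor h/2\rfloor}$ gives the claimed inequality.

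There is no substantive obstacle here; the statement is a straightforward combinatorial identity with a mild slack on the binomial coefficients. The only care needed is to separate the case $k\in\A$ up front, so that the stratification by ``number of coordinates equal to $k$'' is honestly a partition rather than an over-count of representations already present in $\A$. The boundary issue $n-\ell k<0$ requires no special treatment because $\A\subseteq\N$ forces $r_{\A,h-\ell}(n-\ell k)=0$ in that range.
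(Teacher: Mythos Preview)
Your proof is correct and follows essentially the same approach as the paper: stratify the tuples in $(\A\cup\{k\})^h$ summing to $n$ by the number $\ell$ of coordinates equal to $k$, obtain the exact identity $r_{\A\cup\{k\},h}(n) = r_{\A,h}(n) + \sum_{\ell=1}^{h-1}\binom{h}{\ell}r_{\A,h-\ell}(n-\ell k) + \delta_{n/h}(k)$ when $k\notin\A$, and then bound each $\binom{h}{\ell}$ by $\binom{h}{\lfloor h/2\rfloor}$. One minor wording point: when $k\in\A$ the right-hand side does not literally \emph{reduce} to $r_{\A,h}(n)$, but the inequality holds trivially since the extra terms are nonnegative and the middle expression equals $r_{\A,h}(n)$.
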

 \begin{proof}
  The LHS inequality is obvious. For the other side we have:
  \begin{align*}
   r_{\A\cup \{k\},h}(n) &= \sum_{k_1+k_2+\ldots+k_h = n} \mathbbm{1}_{\A\cup \{k\}}(k_1)\mathbbm{1}_{\A\cup \{k\}}(k_2)\ldots \mathbbm{1}_{\A\cup \{k\}}(k_h) \\
   &= \sum_{k_1+k_2+\ldots+k_h = n} \mathbbm{1}_{\A}(k_1)\mathbbm{1}_{\A}(k_2)\ldots \mathbbm{1}_{\A}(k_h) + \sum_{\substack{k_1+k_2+\ldots+k_h = n \\ k \in \{k_1,\cdots,k_h\} \\ k_j\neq k \iff k_j \in \A}} 1.
  \end{align*}
  Thus, assuming $k\notin \A$,
  \[ r_{\A\cup \{k\},h}(n) = r_{\A,h}(n) + \sum_{\ell =1}^{h-1} \binom{h}{\ell}r_{\A,h-\ell}(n-\ell k) + \delta_{n/h}(k), \]
  and the RHS inequality follows.
 \end{proof}
  
 \begin{lem}[Fundamental lemma, $r$ form]\label{t37}
  In $\mathcal{S}_{\A}$ the following holds:
  \begin{equation*}
   \mathbb{E}(r_{\omega,h}(n)) = \Theta\left(\frac{A(n)^h}{n}\right),\quad \forall h\geq 1.
  \end{equation*}
 \end{lem}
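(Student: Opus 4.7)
The plan is to reduce the expectation to an $h$-fold convolution and argue by induction on $h$. Writing $Y_k := \mathbbm{1}_\omega(k)$, the definition gives
\[ \mathbb{E}(r_{\omega,h}(n)) = \sum_{k_1+\cdots+k_h=n}\mathbb{E}(Y_{k_1}\cdots Y_{k_h}); \]
since the $Y_k$'s are independent Bernoullis and $Y_k^e=Y_k$ for every $e\geq 1$, each expectation on the right equals $\prod_{j}\alpha_{m_j}$, where $m_1,\ldots,m_d$ are the \emph{distinct} values appearing among the $k_i$'s. Grouping tuples by their underlying multiset isolates the ``all-distinct'' contribution $h!\,\mathbb{E}(\rho_{\omega,h}(n))$ from a non-negative correction indexed by partitions of $h$ with $d<h$ parts. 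It is cleaner, however, to compare directly against the unrestricted convolution $T_h(n):=\sum_{k_1+\cdots+k_h=n}\alpha_{k_1}\cdots\alpha_{k_h}=(\alpha\ast\cdots\ast\alpha)(n)$: the all-distinct term coincides for $\mathbb{E}(r_{\omega,h})$ and for $T_h$, and on the remaining tuples the inequality $\alpha_m^{e}\leq\alpha_m$ yields $\mathbb{E}(r_{\omega,h}(n)) = T_h(n)+\Xi_h(n)$ with $\Xi_h(n)\geq 0$ supported on the non-exact representations.

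To estimate $T_h(n)\asymp A(n)^h/n$, I would induct via the recursion $T_h=\alpha\ast T_{h-1}$; the base case $h=1$ is immediate from $\alpha_n=A(n)/(n+1)$. For the inductive step, split the convolution at $k=n/2$. For $k\leq n/2$ the OR property of $\A$ gives $T_{h-1}(n-k)\asymp A(n)^{h-1}/n$ uniformly, so the partial sum equals (up to constants) $(A(n)^{h-1}/n)\sum_{k\leq n/2}\alpha_k$, and $\sum_{k\leq n/2}\alpha_k\asymp A(n/2)\asymp A(n)$ by the OR--PI lemma (Lemma~\ref{pilem}) applied to $A$. For $k>n/2$, setting $j=n-k$ and using $\alpha_{n-j}\asymp A(n)/n$, the partial sum becomes $(A(n)/n)\sum_{j<n/2}T_{h-1}(j)$; here Lemma~\ref{pilem} applied to the (still OR$_+$) function $A^{h-1}$ yields $\sum_{j<n/2}T_{h-1}(j)\asymp \int_{x_0}^{n/2} A(t)^{h-1}/t\,\mathrm{d}t\asymp A(n)^{h-1}$. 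Both halves contribute $\Theta(A(n)^h/n)$. The matching lower bound is direct: restrict to strictly ordered tuples whose entries lie in disjoint subintervals of length $\asymp n/h$ around $n/h$, on which $\alpha_{k_i}\asymp A(n)/n$ uniformly by OR, yielding $\Theta(n^{h-1})$ tuples and a total of $\gg A(n)^h/n$.

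To control $\Xi_h(n)$, each partition $(e_1,\ldots,e_d)$ with $d<h$ contributes a constant multiple of $\sum_{\sum e_jm_j=n}\prod_j\alpha_{m_j}$; the change of variables $k_j:=e_jm_j$ combined with the OR-relation $\alpha_{k/e}\asymp\alpha_k$ bounds this by $O(T_d(n))=O(A(n)^d/n)$ using the inductive estimate for smaller values of the first parameter. Since $d\leq h-1$ and $A(n)\to\infty$ (as $\A$ is infinite), summing over the finitely many partitions yields $\Xi_h(n)=O(A(n)^{h-1}/n)=o(A(n)^h/n)$, which is absorbed by the main term.

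The hardest part is the upper bound on $T_h$: the half-convolution with $k>n/2$ forces one to sum $T_{h-1}(j)$ --- and hence $A(j)^{h-1}/j$ --- over small $j$, which requires regularity of $A^{h-1}$ rather than of $A$ itself. This is precisely why the OR$_+$ hypothesis is the right one, since it is closed under multiplication and so transfers cleanly through the powers of $A$ appearing at each stage of the induction, permitting a uniform application of Lemma~\ref{pilem}.
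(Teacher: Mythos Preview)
Your argument is correct and takes a genuinely different route from the paper. The paper inducts through the one-step recursion $\mathbb{E}(r_{\omega,h}(n))=\sum_{k}\mathbb{E}(r_{\omega,h-1}(k)\mathbbm{1}_\omega(n-k))$, where the main obstacle is that $r_{\omega,h-1}(k)$ and $\mathbbm{1}_\omega(n-k)$ are \emph{not} independent; it resolves this by conditioning on $\{\mathbbm{1}_\omega(n-k)=1\}$ and invoking a separate deterministic lemma (Lemma~\ref{lko}) that controls $r_{\A\cup\{m\},h-1}-r_{\A,h-1}$, thereby reducing to the ``independent'' convolution $\sum_k\mathbb{E}(r_{\omega,h-1}(k))\mathbb{E}(\mathbbm{1}_\omega(n-k))$ up to a lower-order error. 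You instead expand fully over $h$-tuples and compare termwise with the unrestricted product convolution $T_h$, exploiting $Y_k^e=Y_k$ to see that the discrepancy $\Xi_h$ is non-negative and supported on non-exact tuples; your partition bound then dispatches $\Xi_h$ as $O(A(n)^{h-1}/n)$ without any conditioning. This is cleaner: it bypasses Lemma~\ref{lko} altogether and in fact yields the $\rho$-form (the paper's Lemma~\ref{exct37}) for free, since your all-distinct term is exactly $h!\,\mathbb{E}(\rho_{\omega,h}(n))$. The paper's approach, on the other hand, keeps the recursion intact, which makes Remark~\ref{imprem} (the tracking of explicit constants through the induction, needed later in Section~\ref{sec4}) slightly more transparent. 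Both routes ultimately rest on the same OR--PI convolution estimate; your observation that $A^{h-1}$ inherits the OR$_+$ property (so Lemma~\ref{pilem} applies to it directly) is exactly the right way to handle the small-$j$ tail $\sum_{j<n/2}T_{h-1}(j)$.
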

 \begin{proof}
  Recall the recursive formulas in \eqref{p1}. We have
  \begin{equation*}
   \mathbb{E}(r_{\omega,h}(n)) = \sum_{k\leq n} \mathbb{E}(r_{\omega,h-1}(k)\mathbbm{1}_{\omega}(n-k)).
  \end{equation*}
  We claim that it is sufficient to show that
  \begin{equation}
   \sum_{k\leq n} \mathbb{E}(r_{\omega,h-1}(k)\mathbbm{1}_{\omega}(n-k)) \asymp \sum_{k\leq n} \mathbb{E}(r_{\omega,h-1}(k))\mathbb{E}(\mathbbm{1}_{\omega}(n-k)). \label{suffsh}
  \end{equation}
  Indeed, since the claim of our lemma holds for $h=1$ by definition, one may apply induction to see that
  \begin{equation}
   \begin{split}
    \sum_{k\leq n} \mathbb{E}(&r_{\omega,h-1}(k))\mathbb{E}(\mathbbm{1}_{\omega}(n-k)) \\
    \asymp~ &\sum_{1\leq k\leq n-1} \frac{A(k)^{h-1}}{k} \frac{A(n-k)}{n-k} \\
    =~ &\sum_{1\leq k\leq n/2} \frac{A(k)^{h-1}}{k} \frac{A(n-k)}{n-k} + \sum_{n/2<k\leq n-1} \frac{A(k)^{h-1}}{k} \frac{A(n-k)}{n-k}.
   \end{split} \label{keyobs}
  \end{equation}
  Then, using that $\A$ is OR$_{+}$, for the ``$\gg$'' side we have
  \begin{align*}
   \sum_{1\leq k\leq n-1} \frac{A(k)^{h-1}}{k} \frac{A(n-k)}{n-k} &\gg \sum_{n/2<k\leq n-1} \frac{A(k)^{h-1}}{k} \frac{A(n-k)}{n-k} \\
   &\gg \frac{A(n)^{h-1}}{n} \sum_{n/2<k\leq n-1} \frac{A(n-k)}{n-k} \\
   &= \frac{A(n)^{h-1}}{n} \sum_{1\leq k< n/2} \frac{A(k)}{k},
  \end{align*}
  which by Lemma \ref{pilem} is $\gg A(n)^h/n$. For the ``$\ll$'' side
  \begin{align*}
   \sum_{1\leq k\leq n/2} \frac{A(k)^{h-1}}{k} &\frac{A(n-k)}{n-k} + \sum_{n/2<k\leq n-1} \frac{A(k)^{h-1}}{k} \frac{A(n-k)}{n-k} \\
   \ll~ &\frac{A(n)}{n}\sum_{1\leq k\leq n/2} \frac{A(k)^{h-1}}{k} + \frac{A(n)^{h-1}}{n}\sum_{n/2<k\leq n-1} \frac{A(n-k)}{n-k},
  \end{align*}
  which, again by Lemma \ref{pilem}, is $\ll A(n)^h/n$.

  Hence, we just need to prove \eqref{suffsh}. Note that the case $h=2$ holds, since the $\mathbbm{1}_{\omega}(n)$ are mutually independent over $n$; hence we apply induction for $h\geq 3$. The r.v.s $r_{\omega,h-1}(k)$ and $\mathbbm{1}_\omega(n-k)$ are not mutually independent in general, but since $\mathbbm{1}_\omega(n-k)$ is a Bernoulli trial, we at least have:
  \begin{equation*}
   \mathbb{E}(r_{\omega,h-1}(k)\mathbbm{1}_{\omega}(n-k)) = \mathbb{E}(\mathbbm{1}_{\omega}(n-k))\mathbb{E}(r_{\omega,h-1}(k)\mid \mathbbm{1}_{\omega}(n-k) = 1).
  \end{equation*}
  Applying Lemma \ref{lko} to $\omega$, $h-1$, $k$ and taking $n-k$,
  \begin{align}
   &\phantom{\leq}\,\,\, \mathbb{E}(\mathbbm{1}_{\omega}(n-k))\mathbb{E}(r_{\omega,h-1}(k)) \phantom{\Bigg(} \nonumber \\
   &\leq \mathbb{E}(\mathbbm{1}_{\omega}(n-k))\mathbb{E}(r_{\omega,h-1}(k)\mid \mathbbm{1}_{\omega}(n-k) = 1) \label{desig} \\
   &\begin{aligned}
    \leq\mathbb{E}(\mathbbm{1}_{\omega}(n-k))\Bigg( \mathbb{E}(r_{\omega,h-1}(k)) ~+~ \delta_{k/(h-1)}&(n-k) ~+ \\
    +~\binom{h-1}{\lfloor \frac{h-1}{2}\rfloor} &\sum_{\ell =1}^{h-2} \mathbb{E}(r_{\omega,h-\ell-1}(\ell k-(\ell-1)n))\Bigg).
   \end{aligned} \nonumber
  \end{align}
  The apparent problem in this estimate is the summation on the upper bound. We show that it ends up being negligible. For $h\geq 3$ we have
  \begin{align*}
   &\phantom{=~}
    \sum_{k\leq n} \mathbb{E}(\mathbbm{1}_{\omega}(n-k)) \Bigg(\binom{h-1}{\lfloor \frac{h-1}{2}\rfloor}\sum_{\ell =1}^{h-2} \mathbb{E}(r_{\omega,h-\ell-1}(\ell k-(\ell-1)n)) \delta_{k/(h-1)}(n-k)\Bigg) \\
   &\begin{aligned}
    =\binom{h-1}{\lfloor \frac{h-1}{2}\rfloor}\sum_{\ell =1}^{h-2}\sum_{k\leq n} \mathbb{E}(r_{\omega,h-\ell-1}(\ell k-(\ell -1)n))\mathbb{E}&(\mathbbm{1}_{\omega}(n-k)) + \\ +~ &\sum_{k\leq n} \delta_{(h-1)n/h}(k)\mathbb{E}(\mathbbm{1}_{\omega}(n-k))
   \end{aligned} \\
   &\ll \sum_{\ell =1}^{h-2}\sum_{k\leq n} \mathbb{E}(r_{\omega,h-2}(\ell k-(\ell-1)n))\mathbb{E}(\mathbbm{1}_{\omega}(n-k)) + \frac{A(n)}{n} \\
   &= \sum_{\ell =1}^{h-2}\sum_{k\leq n} \mathbb{E}(r_{\omega,h-2}(n- \ell k))\mathbb{E}(\mathbbm{1}_{\omega}(k)) +\frac{A(n)}{n}.
  \end{align*}
  Since ``$n-\ell k \geq 0$''$\iff$``$k \leq n/\ell$'', this summation can subdivided as follows:
  \begin{align*}
   \sum_{k\leq n} \mathbb{E}(r_{\omega,h-2}&(n- \ell k))\mathbb{E}(\mathbbm{1}_{\omega}(k)) \\
   \asymp~ &\sum_{1\leq k\leq \frac{n}{2\ell}} \frac{A(n-\ell k)^{h-2}}{n-\ell k}\frac{A(k)}{k} + \sum_{\frac{n}{2\ell} < k\leq \frac{n}{\ell}-1} \frac{A(n-\ell k)^{h-2}}{n-\ell k}\frac{A(k)}{k} \\
   \ll~ & \frac{A(n)^{h-2}}{n}\sum_{1\leq k\leq \frac{n}{2\ell}} \frac{A(k)}{k} + \frac{A(n)}{n}\sum_{\frac{n}{2\ell} < k\leq \frac{n}{\ell}-1} \frac{A(n-\ell k)^{h-2}}{n-\ell k},
  \end{align*}
  therefore, by Lemma \ref{pilem}, this is $O(A(n)^{h-1}/n)$ for all $\ell\geq 2$. With this, we conclude from \eqref{desig} that
  \begin{align}
   &\sum_{k\leq n} \mathbb{E}(r_{\omega,h-1}(k))\mathbb{E}(\mathbbm{1}_{\omega}(n-k)) \nonumber \\
   \leq~ &\sum_{k\leq n} \mathbb{E}(r_{\omega,h-1}(k)\mathbbm{1}_{\omega}(n-k)) \label{desig2} \\
   \leq~ &\sum_{k\leq n} \mathbb{E}(r_{\omega,h-1}(k))\mathbb{E}(\mathbbm{1}_{\omega}(n-k)) + O\left(\frac{A(n)^{h-1}}{n}\right), \nonumber
  \end{align}
  hence \eqref{suffsh} follows, and our proof is complete.
 \end{proof}
 
 Two variations of this lemma will be important to us, one for exact ($\rho$) and other for lower bounded exact ($\restr{\rho}{\geq}$) representation functions. When referenced along the text, consider them in conjunction with Lemma \ref{t37}.
 
 \begin{lem}[Fundamental lemma, $\rho$ form]\label{exct37}
  In $\mathcal{S}_{\A}$ the following holds:
  \[ \mathbb{E}(\rho_{\omega,h}(n)) = \frac{1}{h!}\mathbb{E}(r_{\omega,h}(n)) + O\left(\frac{A(n)^{h-1}}{n}\right), \quad\forall h\geq 2. \]
 \end{lem}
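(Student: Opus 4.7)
The key combinatorial observation is that every ordered $h$-tuple in $\omega^h$ summing to $n$ with all entries distinct contributes exactly $h!$ to $r_{\omega,h}(n)$ per exact representation. Hence $r_{\omega,h}(n) \ge h!\,\rho_{\omega,h}(n)$, and the discrepancy counts ordered $h$-tuples with at least one repeated coordinate. A union bound over the $\binom{h}{2}$ choices of a coincident pair $(i,j)$ and over the common value $k = k_i = k_j$ gives
\[ 0 \le r_{\omega,h}(n) - h!\,\rho_{\omega,h}(n) \le \binom{h}{2}\sum_{2k\le n} \mathbbm{1}_\omega(k)\,r_{\omega,h-2}(n-2k), \]
with the convention $r_{\omega,0}(m) := \mathbbm{1}_{m=0}$. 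It therefore suffices to bound the expectation of the right-hand side by $O(A(n)^{h-1}/n)$.

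For $h=2$ the sum collapses to $\mathbbm{1}_\omega(n/2)$ (or $0$ if $n$ is odd), whose expectation is $\alpha_{n/2} = O(A(n)/n) = O(A(n)^{h-1}/n)$, settling the base case. For $h\ge 3$ the main obstacle is that $\mathbbm{1}_\omega(k)$ and $r_{\omega,h-2}(n-2k)$ are not independent. This is handled exactly as in Lemma \ref{t37}: writing
\[ \mathbb{E}\bigl(\mathbbm{1}_\omega(k)\,r_{\omega,h-2}(n-2k)\bigr) = \alpha_k\,\mathbb{E}\bigl(r_{\omega,h-2}(n-2k)\,\big|\,\mathbbm{1}_\omega(k)=1\bigr), \]
and applying Lemma \ref{lko} to $\omega\setminus\{k\}$ with the added element $k$, we bound the conditional expectation by $\mathbb{E}(r_{\omega,h-2}(n-2k))$ plus correction terms of the form $\mathbb{E}(r_{\omega,h-2-\ell}(n-(2+\ell)k))$ for $1\le \ell\le h-3$. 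After summation in $k$, each correction is $O(A(n)^{h-1-\ell}/n) = O(A(n)^{h-2}/n)$ by the same splitting argument that controlled the analogous contributions in \eqref{desig}, hence negligible.

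What remains is the main term
\[ \sum_{2k\le n}\alpha_k\,\mathbb{E}\bigl(r_{\omega,h-2}(n-2k)\bigr) \asymp \sum_{1\le k<n/2}\frac{A(k)}{k}\cdot\frac{A(n-2k)^{h-2}}{n-2k}, \]
where Lemma \ref{t37} was used inductively. The splitting at $k=n/4$ from \eqref{keyobs} carries over verbatim: for $k\le n/4$ we have $n-2k\asymp n$, so $A(n-2k)^{h-2}/(n-2k)\ll A(n)^{h-2}/n$, while Lemma \ref{pilem} yields $\sum_{k\le n/4} A(k)/k \ll A(n)$; for $n/4<k<n/2$ we use $A(k)/k\ll A(n)/n$ and, substituting $j=n-2k$, apply Lemma \ref{pilem} to $A(j)^{h-2}/j$. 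Both pieces contribute $O(A(n)^{h-1}/n)$, completing the proof. The hard part is managing the dependence in the $h\ge 3$ case, but the machinery developed for Lemma \ref{t37} transfers almost without change.
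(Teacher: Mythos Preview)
Your proposal is correct and follows essentially the same approach as the paper: reduce to bounding the expected number of non-exact ordered $h$-tuples via the union-bound inequality $r_{\omega,h}(n)-h!\,\rho_{\omega,h}(n)\le\binom{h}{2}\sum_{2k\le n}\mathbbm{1}_\omega(k)\,r_{\omega,h-2}(n-2k)$, then handle the dependence between $\mathbbm{1}_\omega(k)$ and $r_{\omega,h-2}(n-2k)$ through Lemma~\ref{lko} exactly as in Lemma~\ref{t37}, and finish with the $k\le n/4$ versus $k>n/4$ split combined with Lemma~\ref{pilem}. The only difference worth noting is that the paper packages this computation by introducing $\mathcal{N}_{\omega,h}(n):=\sum_{k\le n/2}r_{\omega,h-1}(n-2k)\mathbbm{1}_\omega(k)$ and proves the two-sided estimate $\mathbb{E}(\mathcal{N}_{\omega,h}(n))=\Theta(A(n)^h/n)$, whereas you only establish the upper bound; the lower bound is not needed here but is invoked later in Lemma~\ref{o1rhat}.
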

 \begin{proof}
  All we need to do is show that
  \[ \mathbb{E}\left(|\{\mathfrak{R}\in\mathrm{R}_{h}(n;\omega):\mathfrak{R}\text{ is non-exact}\}|\right) = O\left(\frac{A(n)^{h-1}}{n}\right). \]
  This is clearly true for $h=2$, thus we may focus on $h\geq 3$. For this case, consider the non-probabilistic estimate:
  \begin{equation*}
  |\{\mathfrak{R}\in\mathrm{R}_{h}(n;\omega):\mathfrak{R}\text{ is non-exact}\}| \leq \binom{h}{2} \sum_{k\leq n/2} r_{\omega,h-2}(n-2k)\mathbbm{1}_{\omega}(k).
  \end{equation*}
  To analyze this summation, let $\mathcal{N}_{\omega,h}(n) := \sum_{k\leq n/2} r_{\omega,h-1}(n-2k)\mathbbm{1}_{\omega}(k)$. We are going to prove that
  \begin{equation}
   \mathbb{E}(\mathcal{N}_{\omega,h}(n)) = \Theta\left(\frac{A(n)^h}{n}\right),\quad \forall h\geq 2. \label{nnex1}
  \end{equation}
  Just as in \eqref{desig} on the proof of Lemma \ref{t37}, we apply Lemma \ref{lko} for $\omega$, $h-1$, $n-2k$ and $k$ to obtain:
  \begin{align*}
    &\phantom{\leq}\,\,\, \mathbb{E}(\mathbbm{1}_{\omega}(k))\mathbb{E}(r_{\omega,h-1}(n-2k)) \phantom{\Bigg(} \\
    &\leq \mathbb{E}(\mathbbm{1}_{\omega}(k))\mathbb{E}(r_{\omega,h-1}(n-2k)\mid \mathbbm{1}_{\omega}(k) = 1) \\
    &\begin{aligned}
     \leq\mathbb{E}(\mathbbm{1}_{\omega}(k))\Bigg( \mathbb{E}(r_{\omega,h-1}(n-2k)) ~+~ \delta_{(n-2k)/(h-1)}(k&) ~+ \\
     +~\binom{h-1}{\lfloor \frac{h-1}{2}\rfloor} &\sum_{\ell =1}^{h-2} \mathbb{E}(r_{\omega,h-\ell-1}(n-(\ell+2)k))\Bigg).
    \end{aligned}
  \end{align*}
  By calculations similar to the ones in Lemma \ref{t37} it follows
  \begin{align*}
   \mathbb{E}(\mathcal{N}_{\omega,h}(n)) &= \sum_{k\leq n/2} \mathbb{E}(r_{\omega,h-1}(n-2k)\mathbbm{1}_{\omega}(k)) \\
   &= \sum_{k\leq n/2} \mathbb{E}(r_{\omega,h-1}(n-2k))\mathbb{E}(\mathbbm{1}_{\omega}(k)) + O\left(\frac{A(n)^{h-1}}{n}\right).
  \end{align*}
  Finally, by using Lemma \ref{t37},
  \begin{align*} \sum_{k\leq n/2} \mathbb{E}(r_{\omega,h-1}&(n- 2k))\mathbb{E}(\mathbbm{1}_{\omega}(k)) \\
  \asymp~ &\sum_{1\leq k\leq \frac{n}{4}} \frac{A(n-2k)^{h-1}}{n-2k}\frac{A(k)}{k} + \sum_{\frac{n}{4} < k\leq \frac{n}{2}-1} \frac{A(n-2k)^{h-1}}{n- 2k}\frac{A(k)}{k},
  \end{align*}
  thus \eqref{nnex1} follows from Lemma \ref{pilem}.
 \end{proof}
 
 \begin{lem}[Fundamental Lemma, $\restr{\rho}{\geq}$ form]\label{rbulk}
  Given $h\geq 2$, for every $\eps>0$ there is $v_h(\eps)>0$ such that in $\mathcal{S}_{\A}$ the following holds:
  \[ \mathbb{E}(\restr{\rho_{\omega,h}(n)}{\geq \eps n}) \geq (1-v_h(\eps))\mathbb{E}(\rho_{\omega,h}(n)) \text{\emph{ for all large $n$.}} \]
  Furthermore, if $\eps = \eps(n) \to 0^{+}$ as $n\to +\infty$, then for every positive $\gamma < \mlow(A)$ one can take $v_h(\eps)$ in a manner that satisfies
  \begin{equation}
   v_h(\eps) \ll \eps^{\gamma/2^{h-2}} \quad (\text{as } n\to +\infty ), \label{clncut}
  \end{equation}
  where $\mlow(A)$ is the lower Matuszewska index (cf. \eqref{dmatusz}) of $A$.
 \end{lem}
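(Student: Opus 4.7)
The plan is to bound the ``bad'' contribution $\mathbb{E}(\rho_{\omega,h}(n)\flat_{\eps n})$ directly, then show that its ratio with $\mathbb{E}(\rho_{\omega,h}(n))$ is controlled by $A(\eps n)/A(n)$; the quantitative bound then follows from the PI property of $\A$, packaged into the lower Matuszewska index.

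Because $\rho_{\omega,h}(n)$ is by definition a sum of products of \emph{distinct} indicators $\mathbbm{1}_{\omega}(k_1)\cdots\mathbbm{1}_{\omega}(k_h)$, and distinct indicators are mutually independent in $\mathcal{S}_{\A}$, tagging each bad exact representation by its smallest element $k_1<\eps n$ produces the exact identity
\[
\mathbb{E}(\rho_{\omega,h}(n)\flat_{\eps n}) \;=\; \sum_{k_1<\eps n} \alpha_{k_1} \sum_{\substack{k_1<k_2<\ldots<k_h \\ k_2+\ldots+k_h = n-k_1}} \alpha_{k_2}\cdots\alpha_{k_h} \;\leq\; \sum_{k_1<\eps n} \alpha_{k_1}\, \mathbb{E}(\rho_{\omega,h-1}(n-k_1)),
\]
where the inequality comes from dropping the constraint $k_2>k_1$ in the inner sum. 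In particular, no Lemma \ref{lko}-style decoupling is needed at this stage -- the independence of distinct indicators handles it for free, which is the main practical advantage of working with $\rho$ rather than $r$.

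Next, by Lemma \ref{exct37} (combined with Lemma \ref{t37}) one has $\mathbb{E}(\rho_{\omega,h-1}(n-k_1))\asymp A(n-k_1)^{h-1}/(n-k_1)$; since $\A$ is OR (Proposition \ref{chrreg}) and $k_1<\eps n$ with $\eps$ small, both $A(n-k_1)\asymp A(n)$ and $n-k_1\asymp n$, so each summand collapses to $\asymp \alpha_{k_1}\cdot A(n)^{h-1}/n$. By Lemma \ref{pilem} (the OR--PI lemma), $\sum_{k_1<\eps n}\alpha_{k_1}\asymp\sum_{k_1<\eps n} A(k_1)/k_1\asymp A(\eps n)$, yielding
\[
\mathbb{E}(\rho_{\omega,h}(n)\flat_{\eps n}) \;\ll\; \mathbb{E}(\rho_{\omega,h}(n)) \cdot \frac{A(\eps n)}{A(n)}.
\]
Taking $v_h(\eps)$ to be a large enough absolute constant times $\sup_{n} A(\eps n)/A(n)$ proves the first claim; the quantitative statement then follows from the definition of the lower Matuszewska index (cf.~\eqref{dmatusz} below), which for any $\gamma<\mlow(A)$ gives $A(\lambda x)/A(x)\ll \lambda^{\gamma}$ uniformly in $\lambda\leq 1$ as $x\to+\infty$. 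Applying this with $\lambda=\eps$ and $x=n$ yields $v_h(\eps)\ll\eps^{\gamma}$, which is a priori stronger than the stated $\eps^{\gamma/2^{h-2}}$ and suffices equally well for the Main Theorem; the weaker exponent in the statement presumably leaves room for a cleaner induction on $h$ (halving the effective exponent at each inductive step, e.g.\ by an application of the $(h-1)$ version of the result at threshold $\sqrt{\eps}$).

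The main obstacle is not really analytic but one of uniformity bookkeeping: each of the asymptotic relations used above -- $A(n-k_1)\asymp A(n)$, $\sum_{k<\eps n} A(k)/k\asymp A(\eps n)$, and $\mathbb{E}(\rho_{\omega,h-1}(n-k_1))\asymp A(n-k_1)^{h-1}/(n-k_1)$ -- must hold uniformly in $k_1\in[1,\eps n]$ as $\eps=\eps(n)\to 0^{+}$. This is precisely the role of the OR and PI hypotheses on $\A$, codified via Lemma \ref{pilem}, provided $\eps n\to+\infty$, which is a mild condition tacit in the setup.
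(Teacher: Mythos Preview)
Your argument is correct and in fact sharper than the paper's. The key structural difference is that you isolate the \emph{smallest} element $k_1<\eps n$ of a bad representation, whereas the paper isolates the \emph{largest} element $k_h>\frac{1-\eps}{h-1}n$. Your choice is better: after removing $k_1$, the remaining $(h-1)$-tuple is an exact representation of $n-k_1\in[(1-\eps)n,n]$, so Lemma~\ref{exct37} applies directly and uniformly, and the whole bound collapses to $A(\eps n)\cdot A(n)^{h-1}/n$ in one stroke. The paper's choice leaves behind an $(h-1)$-tuple summing to some $\ell\leq \frac{h-2+\eps}{h-1}n$ that still carries the constraint ``$k_1<\eps n$''; to handle this the paper must split at an intermediate threshold $\eps^{1/2}n$ and invoke the $(h-1)$ case inductively, which is exactly the mechanism you guessed in your last paragraph and is what produces the weaker exponent $\gamma/2^{h-2}$. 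Your bound $v_h(\eps)\ll\eps^{\gamma}$ dominates this for every $h\geq 2$ and would only strengthen the downstream applications (Part~1 of Lemmas~\ref{p1ph} and~\ref{p2ph}).

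Two small points of care, neither fatal. First, for $h=2$ your appeal to Lemma~\ref{exct37} for $\mathbb{E}(\rho_{\omega,h-1})$ is formally outside that lemma's stated range $h\geq 2$; of course the inner ``sum'' is then just $\alpha_{n-k_1}$ and the computation is immediate, matching the paper's base case verbatim. Second, your uniformity remark at the end is the right thing to flag: the step $\sum_{k_1<\eps n}\alpha_{k_1}\asymp A(\eps n)$ via Lemma~\ref{pilem} needs $\eps n\to+\infty$, which holds in every application the paper makes of \eqref{clncut} (thresholds $n^{1/2}$, $n^{1-\delta}$, $nL(n)^{-\kappa}$).
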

 \begin{proof}
  Recall that $\rho_{\omega,h}(n)\flat_{\eps n} = \rho_{\omega,h}(n) - \restr{\rho_{\omega,h}(n)}{\geq \eps n}$ is counting those exact $h$-representations in which at least one term is smaller than $\eps n$. All we need to do is show that $\mathbb{E}(\rho_{\omega,h}(n)\flat_{\eps n}) \leq v_{h-1}(\eps) \mathbb{E}(\rho_{\omega,h}(n))$.
  
  An exact $h$-representation $\mathfrak{R}=\{k_1,\ldots,k_h\}$ of $n$ which has $k_1< \eps n$ must have $k_h > \frac{(1-\eps)}{h-1}n$, therefore
  \begin{align} \mathbb{E}(\rho_{\omega,h}(n)\flat_{\eps n}) &= \mathbb{E}\Bigg(\sum_{\substack{k_1+\ldots +k_h = n \\ k_1 < k_2 < \cdots < k_h \\ k_1 < \eps n}} \mathbbm{1}_{\omega}(k_1)\cdots\mathbbm{1}_{\omega}(k_h) \Bigg) \nonumber \\
  &= \sum_{\substack{k_1+\ldots +k_h = n \\ k_1 < k_2 < \cdots < k_h \\ k_1 < \eps n}} \frac{A(k_1)}{k_1+1}\cdots \frac{A(k_h)}{k_h + 1} \nonumber \\
  &< \frac{h-1}{(1-\eps)}\frac{A(n)}{n + \frac{1}{1-\eps}} \sum_{\substack{k_1+\ldots +k_{h-1} \leq \frac{h-2-\eps}{h-1}n \\ k_1 < k_2 < \cdots < k_{h-1} \\ k_1 < \eps n}} \frac{A(k_1)}{k_1+1}\cdots \frac{A(k_{h-1})}{k_{h-1}+1}, \label{bulk1}
  \end{align}  
  where the last sum can be estimated by
  \begin{align}
   \sum_{\ell \leq \frac{h-2-\eps}{h-1}n} &\sum_{\substack{k_1+\ldots +k_{h-1} = \ell \\ k_1 < k_2 < \cdots < k_{h-1} \\ k_1 < \eps n}} \frac{A(k_1)}{k_1+1}\cdots \frac{A(k_{h-1})}{k_{h-1}+1} \nonumber \\
   &\leq \sum_{\ell \leq \eps^{1/2} n} \mathbb{E}(\rho_{\omega, h-1}(\ell)) + \sum_{\eps^{1/2} n < \ell \leq n} \sum_{\substack{k_1+\ldots +k_{h-1} = \ell \\ k_1 < k_2 < \cdots < k_{h-1} \\ k_1 < \eps^{1/2} \ell}} \frac{A(k_1)}{k_1+1}\cdots \frac{A(k_{h-1})}{k_{h-1}+1} \nonumber \\
   &= \sum_{\ell \leq \eps^{1/2} n} \mathbb{E}(\rho_{\omega, h-1}(\ell)) + \sum_{\eps^{1/2} n < \ell \leq n} \mathbb{E}(\rho_{\omega,h-1}(\ell)\flat_{\eps^{1/2}\ell}). \label{bulk2}
  \end{align}
  By Lemma \ref{exct37}, there are $t_h,T_h>0$ such that, for large $n$,
  \[ t_h \frac{A(n)^{h}}{n} \leq \mathbb{E}(\rho_{\omega,h}(n)) \leq T_h \frac{A(n)^{h}}{n}. \]
  In addition, recall that $\A$ is OR$_{+}$, thus its lower Matuszewska index $\mlow(A)$ is positive. Therefore for every $0 < \gamma < \mlow(A)$ there must be some $M = M_\gamma \in \R_{+}$ for which $A(\eps n) \leq M \eps^{\gamma} A(n)$ for all large $n$. Using that, we can prove our statement by induction.
  
  When $h=2$, by Lemma \ref{pilem}, there exists $C\in\R_{+}$ such that, for large $n$,
  \begin{align*} \mathbb{E}(\rho_{\omega,2}(n)\flat_{\eps n}) &= \sum_{k < \eps n} \frac{A(k)}{k+1}\frac{A(n-k)}{n-k+1} \\
  &\leq \frac{A(n)}{(1-\eps)n+1} \sum_{k < \eps n} \frac{A(k)}{k+1} \leq C\cdot \frac{A(n)}{n} A(\eps n),
  \end{align*}
  thus, since $A(\eps n) \leq M \eps^{\gamma} A(n)$, our statement applies for $h=2$, including the upper bound for $v_2(\eps)$ in \eqref{clncut} for $\eps=\eps(n)\to 0^{+}$. Supposing it holds for $h-1$, from \eqref{bulk2} we get
  \begin{align*}
   \sum_{\ell \leq \eps^{1/2} n} \mathbb{E}(&\rho_{\omega, h-1}(\ell)) + \sum_{\eps^{1/2} n < \ell \leq n} \mathbb{E}(\rho_{\omega,h-1}(\ell)\flat_{\eps^{1/2}\ell}) \\
   &\leq C T_{h-1} M^{h-1}\eps^{(h-1)\gamma/2} A(n)^{h-1} + T_{h-1} v_{h-1}(\eps^{1/2})\sum_{\ell \leq n} \frac{A(\ell)^{h-1}}{\ell} \\
   &\leq C T_{h-1} \left(M^{h-1}\eps^{(h-1)\gamma/2} + v_{h-1}(\eps^{1/2}) \right) A(n)^{h-1},
  \end{align*}
  for large $n$ and some $C\in\R_{+}$ that comes from Lemma \ref{pilem}. Substituting this in \eqref{bulk1} yields
  \begin{align*}
   \mathbb{E}(\rho_{\omega,h}(n)\flat_{\eps n}) < \frac{h-1}{(1-\eps)}\frac{n}{n+\frac{1}{1-\eps}}C T_{h-1} \left(M^{h-1}\eps^{(h-1)\gamma/2} + v_{h-1}(\eps^{1/2}) \right) \frac{A(n)^{h}}{n}.
  \end{align*}
  We have that $n/(n+\frac{1}{1-\eps}) \sim 1$ as $n\to +\infty$ and every other term in this coefficient is either bounded or independent of $\eps$, except for $\eps^{(h-1)\gamma/2}$ and $v_{h-1}(\eps^{1/2})$. Since the induction step includes \eqref{clncut}, we deduce that these two terms must remain bounded, and if $\eps=\eps(n)\to 0^{+}$ then they must vanish like $O(\eps^{\gamma/2^{h-2}})$. The proof is then complete.
 \end{proof}
  
 We finish this section with an important remark about Lemma \ref{t37}.
 
 \begin{rem}\label{imprem}
  Let $g:[x_0,+\infty)\to\R_{+}$ be a positive real function satisfying the conditions of Lemma \ref{pilem}. Based on this lemma, for each $2\leq \ell \leq h$ let $c_\ell$ be some positive constant for which
  \[ \sum_{x_0 < k \leq n-x_0} \frac{g(k)^{\ell-1}}{k}\frac{g(n-k)}{n-k} > c_\ell \frac{g(n)}{n} \]
  holds for all large $n$. In Lemma \ref{t37}, in view of \eqref{desig2}, it is possible to deduce from the equations in \eqref{keyobs} that if $\A$ is such that $A(n) > M g(n)$ for some $M > 0$ and all large $n$, then
  \begin{equation*} \mathbb{E}(r_{\omega,h}(n)) > M^{h} \left(\prod_{\ell = 2}^h c_h\right) \frac{g(n)^h}{n}, \quad \text{$\forall$suff. large $n$}. \end{equation*}
  Note that the same holds when inequalities are reversed. These observations are key for the proof of the Main Theorem.
 \end{rem}

\section{Concentration (Step \ref{stepIII})}\label{sec4}
 Now that the general setup is established, we will start working with the conditions from the Main Theorem. We start with the two lemmas that constitute the most technical parts of the proof.

\subsection{Lemmas}
 The following results have a similar theme concerning the condition $A(x) \ll x^{1/(h-1)}$. As mentioned in Section \ref{sec1}, our methods are naturally limited by the fact that $\rhohat_{\omega,h}(n) \leq n/h$; this is basically why $x^{1/(h-1)}$ pops up, and the purpose of the following lemmas is to make the most out of this restraint. The first lemma says that non-exact $h$-representations are, almost surely, conveniently scarce in $\mathcal{S}_{\A}$ for such $\A$.
  
 \begin{lem}\label{o1rhat}
  Let $h\geq 2$ be fixed. If $A(x)\ll x^{1/(h-1)}$, then in $\mathcal{S}_{\A}$ the following holds:
  \begin{equation*}
   \rho_{\omega,h-1}(n) \uptxt{a.s.}{=} O\left(\frac{\log(n)}{\log\log(n)}\right),
  \end{equation*}
  \begin{equation}
   |\{\mathfrak{R}\in\mathrm{R}_{h}(n;\omega):\mathfrak{R}\text{ is non-exact}\}| \uptxt{a.s.}{=} O\left(\frac{\log(n)}{\log\log(n)}\right). \label{loll2}
  \end{equation}
  Furthermore,
  \begin{equation*}
   \rho_{\omega,h}(n) \uptxt{a.s.}{\ll} \sum_{k\in\bigcup\mathcal{C}_n} \rhohat_{\omega,h-1}(n-k), \label{loll3}
  \end{equation*}
  where $\mathcal{C}_n \subseteq \mathrm{ER}_{h}(n;\omega)$ is a maximal disjoint family.
 \end{lem}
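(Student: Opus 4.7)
The common input is that $A(x)\ll x^{1/(h-1)}$ combines with Lemma \ref{exct37} to give $\mathbb{E}(\rho_{\omega,h-1}(n))\ll A(n)^{h-1}/n\ll 1$, with the corresponding expectations at lower levels $\ell<h-1$ decaying polynomially. Each of the three claims will follow from the same recipe: apply the disjointness lemma to a suitable family of events whose first moment is bounded, then use Stirling's formula to make the tail summable, and close with Borel--Cantelli.

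For the first claim, I would apply the disjointness lemma to the family $\mathcal{E}_n:=\{\mathfrak{R}\subseteq\omega : \mathfrak{R}\in \mathrm{ER}_{h-1}(n)\}$, whose indicator sum is $\rho_{\omega,h-1}(n)$; this yields $\Pr(\rhohat_{\omega,h-1}(n)\geq k)\ll C^k/k!$, and choosing $k=\lceil K\log n/\log\log n\rceil$ with $K$ large enough gives (via Stirling) a tail $\ll n^{-2}$, so $\rhohat_{\omega,h-1}(n)=O(\log n/\log\log n)$ almost surely by Borel--Cantelli. To promote this bound from $\rhohat$ to $\rho$, I would proceed by induction on $h$ using the deterministic inequality from the preliminaries,
\[\rho_{\omega,h-1}(n)\leq (h-1)\,\rhohat_{\omega,h-1}(n)\max_{k}\rho_{\omega,h-2}(n-k),\]
where $k$ ranges over the union of a maximal disfam of $\mathrm{ER}_{h-1}(n;\omega)$; the base case $h=2$ is immediate because $\rho_{\omega,1}\leq 1$.

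For the second claim, the non-probabilistic bound already used in the proof of Lemma \ref{exct37},
\[|\{\mathfrak{R}\in\mathrm{R}_h(n;\omega):\mathfrak{R}\text{ non-exact}\}|\leq \binom{h}{2}\sum_{k\leq n/2}r_{\omega,h-2}(n-2k)\mathbbm{1}_\omega(k),\]
has expectation $O(A(n)^{h-1}/n)=O(1)$, so applying the disjointness lemma to the labeled events $(k,k_1,\dots,k_{h-2})$ together with Stirling $+$ Borel--Cantelli delivers the $O(\log n/\log\log n)$ bound almost surely. For the third claim, maximality of $\mathcal{C}_n$ forces every $\mathfrak{R}\in\mathrm{ER}_h(n;\omega)$ to share an element with $\bigcup\mathcal{C}_n$, giving the deterministic inequality $\rho_{\omega,h}(n)\leq\sum_{k\in\bigcup\mathcal{C}_n}\rho_{\omega,h-1}(n-k)$; substituting the $\rho\ll\rhohat$ upgrade at level $h-1$ from the first claim then completes the argument.

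The hard part will be the $\rhohat\to\rho$ upgrade in the first claim. A crude iteration of the preliminary inequality accumulates an extra $\log n/\log\log n$ factor at each level of $h$, so to achieve the announced single-$\log$ bound I will need to exploit the strict polynomial decay $\mathbb{E}(\rho_{\omega,\ell}(n))\to 0$ for $\ell<h-1$, which should force the lower-level $\rho$'s to be almost surely $O(1)$ and prevent the log factors from compounding.
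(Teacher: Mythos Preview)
Your overall strategy is correct and mirrors the paper's: disjointness lemma plus Stirling plus Borel--Cantelli to bound the maxdisfam count, then a recursive upgrade using the polynomial decay $\mathbb{E}(\rho_{\omega,\ell}(n))\ll n^{\ell/(h-1)-1}$ at levels $\ell<h-1$ to force the lower $\rho$'s to be almost surely $O(1)$. Your treatment of the first and third claims is sound and essentially identical to the paper's.

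There is, however, a gap in your handling of the second claim. The disjointness lemma applied to the events ``$\{k,k_1,\dots,k_{h-2}\}\subseteq\omega$'' bounds only the size of a maximal \emph{disjoint} family of such multisets---call it $\widehat{\mathcal{N}}_{\omega,h-1}(n)$---not the full count $\mathcal{N}_{\omega,h-1}(n)=\sum_{k\leq n/2}r_{\omega,h-2}(n-2k)\mathbbm{1}_\omega(k)$. So ``disjointness $+$ Stirling $+$ Borel--Cantelli'' alone does not deliver the stated bound; you still need the $\widehat{\mathcal{N}}\to\mathcal{N}$ upgrade, exactly parallel to the $\rhohat\to\rho$ upgrade you correctly flagged as the hard part of the first claim. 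The subtlety is that when you fix an element $x$ from the union of the maximal disfam and peel it off a solution of $2k+k_1+\cdots+k_{h-2}=n$, the residual may be either an ordinary $(h-2)$-representation of $n-2x$ (if $x$ was the doubled coordinate $k$) or \emph{another} non-exact solution $2k+\cdots=n-x$ (if $x$ was one of the $k_i$). Thus you get a bound of the shape
\[
\mathcal{N}_{\omega,h-1}(n)\ \ll\ \widehat{\mathcal{N}}_{\omega,h-1}(n)\cdot\Big(\max_{m\leq n} r_{\omega,h-2}(m)\ +\ \max_{m\leq n}\mathcal{N}_{\omega,h-2}(m)\Big),
\]
and this forces a \emph{second} induction showing $\mathcal{N}_{\omega,\ell}(n)\uptxt{a.s.}{=}O(1)$ for $2\leq\ell\leq h-2$, running alongside the $\rho_{\omega,\ell}\uptxt{a.s.}{=}O(1)$ induction you already have. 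The paper makes this explicit by introducing $\mathcal{N}_{\omega,\ell}$ and $\widehat{\mathcal{N}}_{\omega,\ell}$ as separate objects, writing down the recursive inequality above, and then running the disjointness-plus-Borel--Cantelli argument for both the $\rhohat$ and $\widehat{\mathcal{N}}$ towers in parallel. Once you add this parallel track, the rest of your proof goes through unchanged.
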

 \begin{proof}
  We start by proving the first two estimates. Just as in Lemma \ref{exct37}, we have the non-probabilistic estimate
  \begin{equation*}
  |\{\mathfrak{R}\in\mathrm{R}_{\ell}(n;\omega):\mathfrak{R}\text{ is non-exact}\}| \leq \binom{\ell}{2} \mathcal{N}_{\omega,\ell -1}(n)
  \end{equation*}
  for $\ell\geq 3$, where $\mathcal{N}_{\omega,\ell}(n) = \sum_{k\leq n/2} r_{\omega,\ell-1}(n-2k)\mathbbm{1}_{\omega}(k)$. Note that this is counting the number of solutions to
  \begin{equation} 2 x_1 + x_2 + \ldots + x_{\ell} = n, \quad x_i\in\omega. \label{eqeps} \end{equation}
 
  Consider now $\widehat{\mathcal{N}}_{\omega,\ell}(n)$ to be the cardinality of the largest maximal collection of disjoint \emph{multisets}\footnote{A \emph{multiset} is a set that allows multiple instances of an element.} $\{k_1,k_2,\ldots,k_\ell\}\subseteq\omega$ satisfying \eqref{eqeps}. We may then deduce the following non-probabilistic inequalities:
  \begin{align}
   r_{\omega,\ell}(n) &\leq \ell! \ell \left(\max_{k\leq n}r_{\omega,\ell-1}(k)\right)\rhohat_{\omega,\ell}(n) + \binom{\ell}{2} \mathcal{N}_{\omega,\ell-1}(n) \label{funnyest} \\
   \mathcal{N}_{\omega,\ell}(n) &\leq \ell! \ell^2 \left(\max_{k\leq n} \{\mathcal{N}_{\omega,\ell-1}(k)\} + \max_{k\leq n} \{r_{\omega,\ell-1}(k)\}\right) \widehat{\mathcal{N}}_{\omega,\ell}(n). \label{funnyest2}
  \end{align}
  Indeed, \eqref{funnyest} follows by noticing that when $\mathcal{C}\subseteq \mathrm{ER}_{\ell}(n;\omega)$ is a maximal collection of pairwise disjoint representations with $|\mathcal{C}|=\rhohat_{\omega,\ell}(n)$, we have $|\bigcup \mathcal{C}| = \ell\cdot \rhohat_{\omega,\ell}(n)$ and $\mathfrak{R}\cap(\bigcup \mathcal{C}) \neq\varnothing$ for every $\mathfrak{R}\in\mathrm{ER}_{\ell}(n;\omega)$; thus the first term bounds the number of exact $\ell$-representations, whereas the latter bounds the number of non-exact ones. The reasoning for \eqref{funnyest2} is analogous.
  
  To prove our lemma, we just need to show that $\mathcal{N}_{\omega,h-1}(n) \uptxt{a.s.}{=} O\left(\frac{\log(n)}{\log\log(n)}\right)$. The estimate in \eqref{funnyest2} allows us to break this into the following three items:
  \begin{enumerate}[(i)]
   \item $r_{\omega,h-2}(n) \uptxt{a.s.}{=} O(1)$; \label{l51i}
   \item $\mathcal{N}_{\omega,h-2}(n) \uptxt{a.s.} = O(1)$; \label{l51ii}
   \item $\widehat{\mathcal{N}}_{\omega,h-1}(n) \uptxt{a.s.} = O(\log(n)/\log\log(n)).$ \label{l51iii}
  \end{enumerate}
  The first two are dealt with by induction. Since $r_{\omega,2}(n)\ll \rhohat_{\omega,2}(n)+1$ and $\mathcal{N}_{\omega,2}(n) \ll \widehat{\mathcal{N}}_{\omega,2}(n)$, to show \eqref{l51i} and \eqref{l51ii} it is sufficient to, in view of \eqref{funnyest} and \eqref{funnyest2}, show that:
  \begin{enumerate}[(i)*]
   \item $\rhohat_{\omega,\ell}(n) \uptxt{a.s.}{=} O(1), \text{ for all } 2\leq \ell \leq h-2$; \label{l51is}
   \item $\widehat{\mathcal{N}}_{\omega,\ell}(n) \uptxt{a.s.} = O(1), \text{ for all } 2\leq \ell \leq h-2$. \label{l51iis}
  \end{enumerate}
  At this point, we have managed to reduce all our problems to estimates involving maxdisfam-type counting functions, thus the next natural step is to use the disjointness lemma! We now prove \eqref{l51is}*, \eqref{l51iis}* and \eqref{l51iii}. \medskip 
  
  \noindent
  $\bullet \textbf{ Items \eqref{l51is}* and \eqref{l51iis}*:}$ Starting with \eqref{l51iis}*, from \eqref{nnex1} in Lemma \ref{exct37} we know that $\mathbb{E}(\mathcal{N}_{\omega,\ell}(n)) = \Theta(A(n)^{\ell}/n)$. Thus, as the function $\widehat{\mathcal{N}}_{\omega,\ell}(n)$ is counting mutually independent events, applying the disjointness lemma yields, in view of having $A(x)\ll x^{1/(h-1)}$,
  \begin{equation*}
   \Pr\left(\widehat{\mathcal{N}}_{\omega,\ell}(n) \geq M \right) \leq \frac{\mathbb{E}(\mathcal{N}_{\omega,\ell}(n))^{M}}{M!} \ll \left(\frac{A(n)^{h-2}}{n}\right)^M \ll n^{-M/(h-1)}.
  \end{equation*}
  Therefore, if we take $M > 2(h-1)$, it follows from the Borel--Cantelli lemma that $\widehat{\mathcal{N}}_{\omega,\ell}(n) \uptxt{a.s.}{=} O(1)$ for all $\ell$ considered. The case \eqref{l51is}* for $\rhohat_{\omega,\ell}(n)$ goes analogously. \medskip
 
  \noindent
  $\bullet \textbf{ Item \eqref{l51iii}:}$ Once again we apply the disjointness lemma, but this time we shall take $M = M_n$ to be a slowly growing function in $n$. First, we have
  \begin{equation}
   \Pr\left(\widehat{\mathcal{N}}_{\omega,h-1}(n) \geq M_n \right) \leq \frac{\mathbb{E}(\mathcal{N}_{\omega,h-1}(n))^{M_n}}{M_n!}. \label{logloglog}
  \end{equation}
  Let $M_n := \lceil 2\log(n)/\log\log(n) \rceil$ for $n > e^e$. Since $M_n$ is unbounded and non-decreasing, we have that for every $\eps>0$ there is $n_0\in\N$ such that
  \[ (1-\eps)M_n\log(M_n) \leq \log(M_n!) \leq (1+\eps) M_n\log(M_n),\quad \forall n\geq n_0. \]
  Thus, keeping in mind that $\mathbb{E}(\mathcal{N}_{\omega,h-1}(n)) < C$ for some $C>0$ and all large $n$, we may apply ``$-\log$'' in both sides of \eqref{logloglog} to obtain, for large $n$,
  \begin{align*}
   -\log\left(\Pr\left(\widehat{\mathcal{N}}_{\omega,\ell}(n) \geq M_n \right)\right) &\geq \log(M_n!) - M_n\log(C) \\
   &\geq (1-\eps)M_n\log(M_n) \\
   &\geq (1-\eps)\frac{2\log(n)}{\log\log(n)}\log\left(\frac{2\log(n)}{\log\log(n)}\right) \\
   &\geq 2(1-\eps)\log(n) - o(\log(n)).
  \end{align*}
  Therefore $\Pr(\widehat{\mathcal{N}}_{\omega,h-1}(n) \geq M_n ) \leq n^{-2(1-\eps)+o(1)}$. Choosing $\eps < 1/2$, we conclude from the Borel--Cantelli lemma that $\widehat{\mathcal{N}}_{\omega,h-1}(n) \uptxt{a.s.}{=} O(M_n)$, thus completing the proof of \eqref{loll2}. As before, the reasoning for $\rho_{\omega,h-1}(n)$ is analogous. \medskip
 
  Finally, given $\ell\geq 2$, consider $\mathcal{C}^{(\ell)}_n \subseteq \mathrm{ER}_{\ell}(n;\omega)$ a maximal disjoint family of exact $\ell$-representations of $n$. By definition, we will have
  \[ \rho_{\omega,\ell}(n) \leq \sum_{k\in\bigcup\mathcal{C}^{(\ell)}_n} \rho_{\omega,\ell-1}(n-k). \]
  From item \eqref{l51i} we know that $\rho_{\omega,\ell-1}(n) \uptxt{a.s.}{=} O(1)$ for $2 \leq \ell \leq h-1$, therefore, since $|\bigcup\mathcal{C}^{(\ell)}_n| = \ell\cdot\rhohat_{\omega,\ell}(n)$, the above implies $\rho_{\omega,\ell}(n) \uptxt{a.s.}{\ll} \rhohat_{\omega,\ell}(n)$ for such $\ell$, thus the last estimate stated in the lemma follows.
 \end{proof}
 
 The second lemma is very similar to Lemma 11 from Erd\H{o}s \& Tetali \cite{erdtet90}; the goal is to show that we will have ``$\Delta$'' arbitrarily small in our applications of the correlation inequality. By an abuse of notation, given $\mathfrak{R}\in\mathrm{ER}_h(n)$ we shall also use ``$\mathfrak{R}$'' to denote the event ``$\mathfrak{R}\in\mathrm{ER}_h(n;\omega)$''.
  
 \begin{lem}\label{upll}
  Let $h\geq 2$ be fixed. For every $\eps>0$ there is $M_0\in\R_{+}$ such that, if $A(x)< M^{-1} x^{1/(h-1)}$ for some $M>M_0$ and all sufficiently large $x$, then in $\mathcal{S}_{\A}$ the following holds:
  \begin{equation*}
   \sum_{\substack{\mathfrak{R},\mathfrak{R}'\in \mathrm{ER}_h(n) \\ 1\leq |\mathfrak{R}\cap\mathfrak{R'}|\leq h-2}} \Pr\left( \mathfrak{R} \wedge \mathfrak{R}'\right) < \eps
  \end{equation*}
  for all sufficiently large $n$.
 \end{lem}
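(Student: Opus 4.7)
My plan is to decompose the sum by the overlap size $j := |\mathfrak{R}\cap\mathfrak{R}'|$, which by hypothesis ranges over $1 \leq j \leq h-2$. A pair $(\mathfrak{R},\mathfrak{R}')$ with $|\mathfrak{R}\cap\mathfrak{R}'| = j$ is parametrized by its shared set $S \subseteq \N$ (of size $j$ with $\sigma(S) := \sum_{k\in S} k < n$) together with two exact $(h-j)$-representations $T_1, T_2$ of $n - \sigma(S)$ that are disjoint from $S$ and from each other. By the mutual independence of the $\mathbbm{1}_\omega(k)$,
\[ \Pr(\mathfrak{R} \wedge \mathfrak{R}') \;=\; \prod_{k \in S} \alpha_k \cdot \prod_{k \in T_1} \alpha_k \cdot \prod_{k \in T_2} \alpha_k, \]
so discarding the disjointness between $T_1$ and $T_2$ yields the upper bound
$\sum_{|S|=j,\, \sigma(S)<n} \prod_{k\in S}\alpha_k \cdot \mathbb{E}(\rho_{\omega,h-j}(n-\sigma(S)))^2$ on the contribution of overlap $j$.

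Collecting over $s = \sigma(S)$ and using $\sum_{|S|=j,\,\sigma(S)=s}\prod_{k\in S}\alpha_k = \mathbb{E}(\rho_{\omega,j}(s))$, the fundamental lemmas (Lemmas \ref{t37} and \ref{exct37}) convert the contribution of overlap $j$ into a convolution of the form
\[ \sum_{s < n} \frac{A(s)^j}{s} \cdot \frac{A(n-s)^{2(h-j)}}{(n-s)^2}, \]
up to admissible correction terms. Substituting $A(x)^{h-1} < x/M^{h-1}$ (the hypothesis raised to the $(h-1)$-th power) gives a built-in factor $M^{-2(h-j)}$ in the second fraction. I would then split the sum at $s = n/2$ and apply the OR--PI lemma (Lemma \ref{pilem}) to each half, with the OR$_+$ regularity of $A$ absorbing the tails; for every $j \geq 2$ the resulting total is $O(M^{-(2h-j)})$, which is smaller than $\eps/(h-2)$ once $M_0$ is taken sufficiently large (depending on $\eps$ and $h$).

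The main obstacle is the case $j = 1$: the exponent on $(n-s)$ in the inner square vanishes, and the naive estimate collapses to $A(n)/M^{2(h-1)}$, which under the blanket hypothesis alone is a priori of order $n^{1/(h-1)}/M^{2h-1}$ and therefore grows in $n$. To overcome this, the proof must exploit the additional structure previously discarded: the disjointness of the pair $(T_1, T_2)$---which, for $h-1 = 2$, holds automatically once $T_1 \neq T_2$---together with the almost-sure bound $\rho_{\omega,h-1}(n) \ll \log n/\log\log n$ provided by Lemma \ref{o1rhat}. A careful dyadic decomposition in the shared element $k$, making essential use of the positive-increase property $\mlow(A) > 0$, should then extract the missing decay and bring the $j = 1$ contribution within $\eps/(h-2)$; summing over $j \in \{1,\ldots,h-2\}$ then completes the proof.
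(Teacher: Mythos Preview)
Your decomposition by $j=|\mathfrak{R}\cap\mathfrak{R}'|$ and the reduction to $\sum_{s}\mathbb{E}(\rho_{\omega,j}(s))\,\mathbb{E}(\rho_{\omega,h-j}(n-s))^2$ is exactly what the paper does. The paper, however, does \emph{not} single out $j=1$: it treats all $1\le j\le h-2$ uniformly by replacing both appearances of $A$ by $M^{-1}x^{1/(h-1)}$ via Remark~\ref{imprem}, then pulling out one copy of the squared factor $(n-s)^{(h-j)/(h-1)}/(n-s)$ by its supremum (which is $1$) and asserting that the remaining convolution $\sum_s s^{j/(h-1)-1}(n-s)^{(h-j)/(h-1)-1}$ is bounded by a universal constant $T$. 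The whole thing is then $\ll M^{-c}T$ for some $c>0$ depending on $j$.

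Your proposed fix for $j=1$ does not work. The sum $\sum\Pr(\mathfrak{R}\wedge\mathfrak{R}')$ is a \emph{deterministic} function of the $\alpha_k$; invoking the almost-sure bound of Lemma~\ref{o1rhat} on the random variable $\rho_{\omega,h-1}$ is a category error, since there is no $\omega$ to condition on. The disjointness of $(T_1,T_2)$ does not help either: for $h=3$ (so $h-j=2$) two \emph{distinct} exact $2$-representations of $n-k$ are automatically disjoint, so enforcing disjointness only removes the diagonal $T_1=T_2$, whose total contribution is $\sum_T\Pr(T)^2=O(n^{-1}\log n)$, vastly smaller than the $\asymp n^{1/(h-1)}$ you are trying to kill. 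No dyadic argument in the shared element $k$ will change this, because when $A(x)\asymp c\,x^{1/(h-1)}$ the $j=1$ sum really is of exact order $c^{2h-1}n^{1/(h-1)}$.

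You are right, though, that something is off at $j=1$: the paper's claimed bound $T$ on the reduced convolution also fails there, since for $\ell=1$ that sum is $\sum_{k=1}^{n-1}k^{1/(h-1)-1}\asymp n^{1/(h-1)}$. The resolution is not to argue harder, but to notice that the application (Part~2 of Lemma~\ref{pholds}) only requires $\Delta(n)\le \eps\,\mathbb{E}(\rho_{\omega,h}(n))$, and \emph{that} follows cleanly from the max trick: bounding one factor $\mathbb{E}(\rho_{\omega,h-j}(n-s))$ by its supremum $\ll M^{-(h-j)}$ leaves $\sum_s\mathbb{E}(\rho_{\omega,j}(s))\,\mathbb{E}(\rho_{\omega,h-j}(n-s))\asymp\mathbb{E}(\rho_{\omega,h}(n))$ by the fundamental lemma, so each $j$ contributes $\ll M^{-(h-j)}\mathbb{E}(\rho_{\omega,h}(n))$.
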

 \begin{proof}
  Rewriting this sum as
  \begin{equation*}
   \sum_{\substack{\mathfrak{R},\mathfrak{R}'\in \mathrm{ER}_h(n) \\ 1\leq |\mathfrak{R}\cap\mathfrak{R'}|\leq h-2}} \Pr\left( \mathfrak{R} \wedge \mathfrak{R}'\right) = \sum_{\ell=1}^{h-2} \sum_{\substack{\mathfrak{R},\mathfrak{R}'\in \mathrm{ER}_h(n) \\ |\mathfrak{R}\cap\mathfrak{R'}|=\ell}} \Pr\left( \mathfrak{R} \wedge \mathfrak{R}'\right),
  \end{equation*}
  we just need to estimate each of the terms for $\ell$ fixed. Taking $\mathfrak{R}\cap\mathfrak{R}'$ with $|\mathfrak{R}\cap\mathfrak{R}'|=\ell$, say we have
  \[ \mathfrak{R} = \{x_1,\ldots,x_\ell, y_1,\ldots,y_{h-\ell}\} \text{ and } \mathfrak{R}' = \{x_1,\ldots,x_\ell, z_1,\ldots,z_{h-\ell}\}, \]
  with $\sum_{i=1}^\ell x_i = k$ and $y_i \neq z_j$ for all $i,j$. Letting $\sum^*$ denote the sum over exact representations and abusing the notation again by writing ``$\Pr(x_i)$'' to denote ``$\Pr(\mathbbm{1}_{\omega}(x_i)=1)$'', we have:
  \begin{align*}
   \sum_{\substack{\mathfrak{R},\mathfrak{R}'\in \mathrm{ER}_h(n) \\ |\mathfrak{R}\cap\mathfrak{R'}|=\ell}} &\Pr\left( \mathfrak{R} \wedge \mathfrak{R}'\right) \\
   =~ &\sum_{k\leq n} ~\sideset{}{^{*}}\sum_{\substack{x_1+\ldots+x_\ell = k \\ y_1+\ldots+y_{h-\ell} = n-k \\ z_1+\ldots+z_{h-\ell} = n-k}} \Pr(x_1)\cdots\Pr(x_\ell)\Pr(y_1)\cdots\Pr(y_{h-\ell})\Pr(z_1)\cdots\Pr(z_{h-\ell}) \\
   \leq~ &\sum_{k\leq n} \left(~\sideset{}{^{*}}\sum_{x_1+\ldots +x_\ell = k} \Pr(x_1)\cdots\Pr(x_\ell)\right) \left(~\sideset{}{^{*}}\sum_{y_1+\ldots +y_{h-\ell} = n-k} \Pr(y_1)\cdots\Pr(y_{h-\ell})\right)^2 \\
   =~ &\sum_{k\leq n} \mathbb{E}(\rho_{\omega,\ell}(k))\mathbb{E}(\rho_{\omega,h-\ell}(n-k))^2.
  \end{align*}
  
  Since $A(n) < M^{-1}n^{1/(h-1)}$ and $\mathbb{E}(\rho_{\omega,\ell}(n)) \leq \ell!^{-1} \mathbb{E}(r_{\omega,\ell}(n))$ for all large $n$, the observation from Remark \ref{imprem} allows us to derive
  \begin{equation}
   \begin{split}
    \sum_{k\leq n} &\mathbb{E}(\rho_{\omega,\ell}(k)) \mathbb{E}(\rho_{\omega,h-\ell}(n-k))^2 \\
    &< M^{-\ell (h-\ell)^2} \frac{C_\ell (C_{h-\ell})^2}{\ell! (h-\ell)!^2} \sum_{1\leq k\leq n-1} \frac{k^{\ell/(h-1)}}{k}\left(\frac{(n-k)^{(h-\ell)/(h-1)}}{n-k}\right)^{2},
   \end{split} \label{bondelta}
  \end{equation}
  for some $C_\ell,C_{h-\ell}>0$ not depending on $\A$, for all large $n$. Note that for all $\ell$ ranging from $1$ to $h-2$, which is the range being considered,
  \[ \max_{1\leq k \leq n-1} \left(\frac{(n-k)^{(h-\ell)/(h-1)}}{n-k}\right) = 1, \]
  hence
  \begin{align*}
   M^{-\ell (h-\ell)^2} &\frac{C_\ell (C_{h-\ell})^2}{\ell! (h-\ell)!^2} \sum_{1\leq k\leq n-1} \frac{k^{\ell/(h-1)}}{k}\left(\frac{(n-k)^{(h-\ell)/(h-1)}}{n-k}\right)^{2} \\
   &\leq M^{-\ell (h-\ell)^2} \frac{C_\ell (C_{h-\ell})^2}{\ell! (h-\ell)!^2} \sum_{1\leq k\leq n-1} \frac{k^{\ell/(h-1)}}{k}\frac{(n-k)^{(h-\ell)/(h-1)}}{n-k}.
  \end{align*}
  The nature of the above convolution is analogous to that of \eqref{keyobs} and the ones described in Remark \ref{imprem}, thus it is not difficult to see that this sum is bounded as $n\to+\infty$ by, say, some $T>0$. Going back to \eqref{bondelta}, we then arrive at
  \begin{equation*}\sum_{\substack{\mathfrak{R},\mathfrak{R}'\in \mathrm{ER}_h(n) \\ 1\leq |\mathfrak{R}\cap\mathfrak{R'}|\leq h-2}} \Pr\left( \mathfrak{R} \wedge \mathfrak{R}'\right) < M^{-\ell (h-\ell)^2} \frac{C_\ell (C_{h-\ell})^2}{\ell! (h-\ell)!^2}T \end{equation*}
  for all large $n$, where $M$ is the only term depending on $\A$. Since $M$ can be chosen as big as desirable, our lemma follows.
 \end{proof}
 
 We now prove the Main Theorem.
 
\subsection{Proof of the Main Theorem}
 Fix $h\geq 2$ an integer. Take a positive and locally integrable real function $f:[x_0,+\infty)\to\R_{+}$ such that
 \[ x^{1/h}\log(x)^{1/h} \ll f(x) \ll x^{1/(h-1)} \quad \text{ and }\quad \int_{x_0}^x \frac{f(t)}{t}\mathrm{d}t = \Theta(f(x)). \]
 Since $f(x)\ll x$, let $K := \limsup_x f(x)/x$. Choose $\epsilon>0$ and let $x^{*}$ be such that $f(x) \leq (K+\epsilon)x$ for all $x\geq x^{*}$. Define then, for every $n\geq 0$,
 \[ \begin{cases} \alpha_n := 1,\negphantom{$\alpha_n = 1,$}\phantom{\alpha_n = (K+\epsilon)^{-1} f(n)/n,} \text{ if $n < x^{*}$;} \\ \alpha_n := (K+\epsilon)^{-1} f(n)/n, \text{ if $n\geq x^{*}$.}\end{cases}  \]
 By Lemma \ref{pilem}, $f$ is in particular almost increasing, thus
 \[ \int_{x_0}^{x} \frac{f(t)}{t}\mathrm{d}t \asymp \sum_{x_0 <n\leq x} \frac{f(n)}{n}, \]
 which, by the same lemma, is $\Theta(f(x))$. From strong law of large numbers it then follows that in $\langle\mathcal{S},(\alpha_n)_n \rangle$ it holds $|\omega\cap[0,x]| \uptxt{a.s.}{\asymp} f(x)$. Since this is a non-empty  probability space, there must be at least some $\C\subseteq\N$ for which $C(x)=\Theta(f(x))$.
  
 Based on this, let us construct a more convenient sequence $\A$. We know from $x^{1/h}\log(x)^{1/h}\ll C(x) \ll x^{1/(h-1)}$ that there must exist $q_1,q_2 > 0$ for which $\exists n_0\in\N : \forall n\geq n_0$ the following holds:
 \begin{equation}
  q_1 \cdot x^{1/h}\log(x)^{1/h}\leq C(x) \leq q_2\cdot x^{1/(h-1)} \label{q1q2}
 \end{equation}
 With this in mind, take:
 \begin{equation}
  M_1,M_2 > 1 \quad \textit{large real constants to be specified later.} \label{m1m2}
 \end{equation}
 Let $n_1\in\N$ be large enough so that, for all $n\geq n_1$,
 \begin{equation*}
  \begin{split}
   M_1 n^{1/h}\log(n)^{1/h} + 1 &> M_1(n+1)^{1/h}\log(n+1)^{1/h}, \\
   \frac{1}{M_2} n^{1/(h-1)} + 1 &> \frac{1}{M_2} (n+1)^{1/(h-1)}.
  \end{split}
 \end{equation*}  
 The following construction is then well-defined:
 \begin{itemize}
  \item Define $\A_{n_1} := \{1,2\ldots,n_1\}$.
   
  \item For each integer $k > n_1$, define $\A_k$ in the following manner:
  \begin{itemize}
   \item In case $k \notin \C$, check whether
   \[ \frac{|\A_{k-1}\cap [0,k]|}{k^{1/h}\log(k)^{1/h}} \leq M_1. \]
   If so, take $\A_{k} := \A_{k-1} \cup \{k\}$; otherwise, take $\A_{k} := \A_{k-1}$.
    
   \item In case $k \in \C$, check whether
   \[ \frac{|(\A_{k-1}\cup \{k\})\cap [0,k]|}{k^{1/(h-1)}} \geq \frac{1}{M_2}. \]
   If so, take $\A_{k} := \A_{k-1}$; otherwise, take $\A_{k} := \A_{k-1}\cup\{k\}$.
  \end{itemize}
 \end{itemize}
 Finally, let $\A := \bigcup_{k\geq n_1} \A_k$. From the symmetric nature of this construction and from \eqref{q1q2}, it is possible to deduce that if $M_1,M_2$ are large enough so that $M_1 M_2 > q_1/q_2$, then $\A$ must satisfy
 \begin{equation}
  \limsup_{n\to +\infty} \frac{A(n)}{C(n)} \leq \frac{M_1}{q_1} \quad \text{and} \quad \liminf_{n\to +\infty} \frac{A(n)}{C(n)} \geq \frac{1}{q_2 M_2}, \label{relbc}
 \end{equation}
 which implies $A(x) = \Theta(C(x))$. More importantly, $\A$ satisfies:
 \begin{equation}
  M_1 \cdot n^{1/h}\log(n)^{1/h}\leq A(n) \leq \frac{1}{M_2} \cdot n^{1/(h-1)} \label{impbnd}
 \end{equation}
 from some point onward.
  
 Before proceeding, we highlight the important stuff that shall be used throughout the proof of our next results in the following remark.
  
 \begin{xrem}
  If the constants $M_1,M_2$ from \eqref{m1m2} are large enough -- i.e. satisfying $M_1 M_2 > q_1/q_2$ where $q_1,q_2$ are the constants from \eqref{q1q2}, then we can construct an OR$_{+}$ sequence $\A$ with $A(n)=\Theta(f(n))$ satisfying \eqref{impbnd} for all sufficiently large $n$.
 \end{xrem}
 
 In short, to prove the Main Theorem stated at the introduction we first show that $\rhohat_{\omega,h}(n)$ has a.s. the right rate of growth, then $\rho_{\omega,h}(n)$, and then $r_{\omega,h+\ell}(n)$ for all $\ell\geq 0$.
  
 \begin{lem}\label{pholds} In the above construction of $\A$, if $M_1>1$ is sufficiently large, then in $\mathcal{S}_{\A}$ the following holds:
 \begin{equation*}\rhohat_{\omega,h}(n) \uptxt{a.s.}{=} \Theta\left(\frac{A(n)^h}{n}\right). \end{equation*}
 \end{lem}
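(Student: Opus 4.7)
The plan is to apply the disjointness lemma to the family $\{E_{\mathfrak{R}}\}_{\mathfrak{R}\in \mathrm{ER}_{h}(n)}$, where $E_{\mathfrak{R}}$ denotes the event ``$\mathfrak{R}\subseteq \omega$''. Any disjoint sub-family of size $k$ among the $\mathfrak{R}$ with $E_{\mathfrak{R}}$ holding witnesses $\rhohat_{\omega,h}(n)\geq k$, so
\[
\Pr\bigl(\rhohat_{\omega,h}(n)\geq k\bigr)\;\leq\;\frac{\mathbb{E}(\rho_{\omega,h}(n))^{k}}{k!}.
\]
Lemma \ref{exct37} gives $\mathbb{E}(\rho_{\omega,h}(n))=\Theta(A(n)^{h}/n)$, and the construction of $\A$ guarantees $A(n)^{h}/n\geq M_{1}^{h}\log n$ for all large $n$. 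Setting $k_{n}=\lceil C_{1}A(n)^{h}/n\rceil$ with $C_{1}$ sufficiently large relative to the implicit constant of Lemma \ref{exct37}, a Stirling estimate bounds the probability above by $O(n^{-2})$ as soon as $M_{1}$ is large, and Borel--Cantelli closes this direction.

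\textbf{Lower bound: setup.} For the lower bound I would combine the correlation inequality with Lemma \ref{upll}. The key structural observation is that two distinct exact $h$-representations of $n$ cannot share exactly $h-1$ elements, since the missing coordinate would be determined by the sum constraint; consequently the interaction term
\[
\Delta=\sum_{\substack{\mathfrak{R}<\mathfrak{R}'\\ \mathfrak{R}\cap \mathfrak{R}'\neq \varnothing}}\Pr\bigl(E_{\mathfrak{R}}\wedge E_{\mathfrak{R}'}\bigr)
\]
of the correlation inequality is exactly the sum controlled by Lemma \ref{upll}, hence can be made smaller than any prescribed $\eps>0$ by taking $M_{2}$ large. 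A Janson-type consequence of the correlation inequality --- e.g.\ obtained by partitioning $\mathrm{ER}_{h}(n)$ into sub-families $\mathcal{B}_{i}$ of expected mass $\Theta(\log n)$ and applying the correlation inequality within each, or by combining it with higher-moment concentration estimates --- yields $\Pr(\rho_{\omega,h}(n)\leq \mu/2)\leq n^{-\delta}$ summably, since $\mu=\Theta(A(n)^{h}/n)\geq M_{1}^{h}\log n$. Borel--Cantelli then gives $\rho_{\omega,h}(n)\uptxt{a.s.}{\gg}A(n)^{h}/n$.

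\textbf{From $\rho$ to $\rhohat$ and the main obstacle.} Let $P_{\omega,h}(n)$ denote the number of unordered pairs of distinct overlapping representations in $\mathrm{ER}_{h}(n;\omega)$. Greedy deletion yields the deterministic inequality $\rhohat_{\omega,h}(n)\geq \rho_{\omega,h}(n)-P_{\omega,h}(n)$, so the problem reduces to establishing $P_{\omega,h}(n)\uptxt{a.s.}{\ll}A(n)^{h}/n$. Lemma \ref{upll} gives $\mathbb{E}(P_{\omega,h}(n))<\eps$, and one derives the required a.s.\ bound by applying the disjointness lemma to the auxiliary family of ``pair-configurations'' --- vertex sets of size $2h-\ell$, $1\leq \ell\leq h-2$, arising from two overlapping representations --- and invoking Borel--Cantelli. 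The principal obstacle is that $\mathbb{E}(P_{\omega,h}(n))$ is merely a bounded constant in $n$, not summable; overcoming this requires leveraging the sharper $M_{2}^{-\ell(h-\ell)^{2}}$ decay implicit in the proof of Lemma \ref{upll} and iterating the disjointness lemma on pair-configurations to extract tail bounds that are summable in $n$.
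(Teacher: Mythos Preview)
Your upper bound is correct and essentially identical to the paper's Part~1.

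Your lower bound takes a genuinely different route from the paper and contains a real gap. The paper does \emph{not} first establish $\rho_{\omega,h}(n)\gg A(n)^{h}/n$ and then pass to $\rhohat$; it attacks $\rhohat$ directly. Writing $\mu=\mathbb{E}(\rho_{\omega,h}(n))$, the paper bounds
\[
\Pr\bigl(\exists\text{ maximal disjoint }\mathcal{C}\subseteq\restr{\mathrm{ER}_{h}(n;\omega)}{\geq n^{1/2}}\text{ with }|\mathcal{C}|\leq t\mu\bigr)
\]
by summing over disjoint $k$-tuples $\{\mathfrak{R}_{1},\ldots,\mathfrak{R}_{k}\}$ (controlled by the disjointness lemma) and, for each, bounding the probability that the tuple is \emph{maximal} via the correlation inequality applied to the events $\bar{\mathfrak{R}}$ for all $\mathfrak{R}$ disjoint from the $\mathfrak{R}_{j}$. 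The restriction to representations with all parts $\geq n^{1/2}$ is there precisely to ensure $\Pr(E_{\mathfrak{R}})\leq 1/2$, a hypothesis of the correlation inequality that your sketch never addresses.

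Your route via $\rhohat\geq\rho-P$ fails at exactly the point you flag as the main obstacle, and the proposed fix does not close it. The $M_{2}^{-\ell(h-\ell)^{2}}$ decay from Lemma~\ref{upll} only makes $\mathbb{E}(P)$ a smaller \emph{constant}; it introduces no decay in $n$. The disjointness lemma applied to pair-configurations therefore yields at best $\widehat{P}\uptxt{a.s.}{\ll}\log n/\log\log n$ for the maxdisfam count, and to recover $P$ from $\widehat{P}$ you must multiply by the maximum degree in the pair-configuration hypergraph. That degree is governed by quantities like $\max_{k}\rho_{\omega,h-1}(n-k)$, which by Lemma~\ref{o1rhat} is itself of order $\log n/\log\log n$, not $O(1)$. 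The net bound is $P\uptxt{a.s.}{\ll}(\log n/\log\log n)^{c}$ for some $c>1$, and since the construction of $\A$ forces $A(n)^{h}/n\asymp\log n$ for infinitely many $n$, for those $n$ you have $P\gg\rho$ and the inequality $\rhohat\geq\rho-P$ says nothing. Comparing $\rho$ and $\rhohat$ is genuinely delicate: the paper devotes Lemmas~\ref{p1ph}--\ref{p2ph} to it, and those lemmas \emph{use} Lemma~\ref{pholds} as input --- which is exactly why the paper orders the argument the way it does.
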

 \begin{proof}
  We divide the proof into two parts. \medskip
 
  \noindent
  $\bullet \textbf{ Part 1: } \textit{Showing $\rhohat_{\omega, h}(n) \uptxt{a.s.}{\ll} A(n)^h/n$}$.
  
  Let $T>0$ be a large integer. By the disjointness lemma,
  \[ \Pr\left(\rhohat_{\omega,h}(n) \geq T\lceil \mathbb{E}(\rho_{\omega,h}(n))\rceil \right) \leq \frac{\mathbb{E}(\rho_{\omega,h}(n))^{T\lceil \mathbb{E}(\rho_{\omega,h}(n))\rceil}}{(T\lceil \mathbb{E}(\rho_{\omega,h}(n))\rceil)!}, \]
  thus, as for all $k\geq 1$ it holds $k!>k^ke^{-k}$,
  \begin{equation*}
   \Pr\left(\rhohat_{\omega,h}(n) \geq T\lceil \mathbb{E}(\rho_{\omega,h}(n))\rceil \right) \leq \left(\frac{e}{T} \frac{\mathbb{E}(\rho_{\omega,h}(n))}{\lceil \mathbb{E}(\rho_{\omega,h}(n))\rceil}\right)^{T\lceil \mathbb{E}(\rho_{\omega,h}(n))\rceil}.
  \end{equation*}
  Since $A(n) \geq M_1 n^{1/h}\log(n)^{1/h}$, in view of Lemma \ref{exct37} and Remark \ref{imprem} we have $\mathbb{E}(\rho_{\omega,h}(n)) > \alpha M_1^h \log(n)$ for all large $n$, where $\alpha>0$ is some constant (not depending on $\A$!). Thus, taking $T > \max\{e^2, 2\alpha^{-1} M_1^{-h}\}$ yields
  \[ \left(\frac{e}{T} \frac{\mathbb{E}(\rho_{\omega,h}(n))}{\lceil \mathbb{E}(\rho_{\omega,h}(n))\rceil}\right)^{T\lceil \mathbb{E}(\rho_{\omega,h}(n))\rceil} \ll \left(\frac{1}{e}\right)^{2\log(n)+o(\log(n))} \ll n^{-2+o(1)}, \]
  hence it follows from the Borel--Cantelli lemma that $\rhohat_{\omega,h}(n)\uptxt{a.s.}{\ll} A(n)^{h}/n$. \medskip
  
  \noindent
  $\bullet \textbf{ Part 2: } \textit{Showing $\rhohat_{\omega, h}(n) \uptxt{a.s.}{\gg} A(n)^h/n$}$.
  
  This part is basically Theorem 3 from Erd\H{o}s \& Tetali \cite{erdtet90} with the added nuances from our hypotheses. Recalling our definition of lower bounded representations, it suffices to show that $\restr{\rhohat_{\omega, h}(n)}{\geq n^{1/2}} \uptxt{a.s.}{\gg} A(n)^h/n$. From Lemma \ref{rbulk},
  \begin{equation}
   \mathbb{E}(\restr{\rho_{\omega, h}(n)}{\geq n^{1/2}}) = (1-o(1))\mathbb{E}(\rho_{\omega, h}(n)). \label{lbsqrt}
  \end{equation}
  Similar to what was done in Lemma \ref{upll}, given $\mathfrak{R}\in\restr{\mathrm{ER}_h(n)}{\geq n^{1/2}}$ we abuse the notation by letting ``$\mathfrak{R}$'' denote the event ``$\mathfrak{R} \in \restr{\mathrm{ER}_h(n;\omega)}{\geq n^{1/2}}$''. In addition, let $\sum^{(k)}$ denote the sum over all $k$-sets of pairwise disjoint lower bounded (by $n^{1/2}$) exact representations of $n$. Letting $t>0$ be a small real number, we then have:
  \begin{align}
   &\, \Pr\big(\exists \mathcal{C}\subseteq\restr{\mathrm{ER}_h(n;\omega)}{\geq n^{1/2}} \text{ maxdisfam}: |\mathcal{C}| \leq t\mathbb{E}(\rho_{\omega,h}(n)) \big) \nonumber \\
   = &\sum_{1\leq k\leq t\mathbb{E}(\rho_{\omega,h}(n))} \Pr\big(\exists \mathcal{C}\subseteq\restr{\mathrm{ER}_h(n;\omega)}{\geq n^{1/2}} \text{ maxdisfam}: |\mathcal{C}| = k \big) \nonumber \\
   = &\sum_{1\leq k\leq t\mathbb{E}(\rho_{\omega,h}(n))} ~~\sideset{}{^{(k)}}\sum_{\substack{\{\mathfrak{R}_1,\ldots,\mathfrak{R}_k\}\\ \text{pairwise}\\ \text{disjoint}}} \bigg(\Pr\left( \mathfrak{R}_1 \wedge\cdots \wedge\mathfrak{R}_k \right)\cdot \Pr(\{\mathfrak{R}_1\ldots\mathfrak{R}_k\} \text{ is maximal})\bigg) \nonumber
  \end{align}
  
  We may estimate both factors in the above summation separately. The first term can be estimated directly from the disjointness lemma:
  \begin{equation}
   \sideset{}{^{(k)}}\sum_{\substack{\{\mathfrak{R}_1,\ldots,\mathfrak{R}_k\}\\ \text{pairwise}\\ \text{disjoint}}} \Pr\left( \mathfrak{R}_1 \wedge\cdots \wedge\mathfrak{R}_k \right) \leq \frac{\mathbb{E}(\rho_{\omega,h}(n))^k}{k!} \label{fact1}
  \end{equation}
  For the second term we use the correlation inequality! Given $\mathfrak{R}\in \restr{\mathrm{ER}_h(n)}{\geq n^{1/2}}$, let ``$\bar{\mathfrak{R}}$'' denote the event ``$\mathfrak{R}\notin \restr{\mathrm{ER}_h(n;\omega)}{\geq n^{1/2}}$''. Then:
  \[ \Pr(\{\mathfrak{R}_1,\ldots,\mathfrak{R}_k\} \text{ is maximal}) = \Pr\Bigg(\bigwedge_{\substack{\mathfrak{R}\in \restr{\mathrm{ER}_h(n)}{\geq n^{1/2}} \\  \mathfrak{R} \cap \mathfrak{R}_j = \varnothing \, (j=1,\ldots,k) }}\bar{\mathfrak{R}}\Bigg) \]
  When $h=2$ the events $\bar{\mathfrak{R}}$ are mutually independent, and when $h\geq 3$ we can choose $n$ large enough so that $\Pr(\mathfrak{R})\leq 1/2$ for every $\mathfrak{R}\in \restr{\mathrm{ER}_h(n)}{\geq n^{1/2}}$.\footnote{This passage illustrates the necessity of considering $\restr{\mathrm{ER}_h(n)}{\geq n^{1/2}}$ instead of $\mathrm{ER}_h(n)$.} Therefore we can apply the correlation inequality as follows:
  \begin{align*}
   \Pr\Bigg(\bigwedge_{\substack{\mathfrak{R}\in \restr{\mathrm{ER}_h(n)}{\geq n^{1/2}} \\  \mathfrak{R} \cap \mathfrak{R}_j = \varnothing \, (j=1,\ldots,k) }}\bar{\mathfrak{R}}\Bigg) &\leq \Bigg(\prod_{\substack{\mathfrak{R}\in \restr{\mathrm{ER}_h(n)}{\geq n^{1/2}} \\  \mathfrak{R} \cap \mathfrak{R}_j = \varnothing \, (j=1,\ldots,k) }} \Pr(\bar{\mathfrak{R}})\Bigg) e^{2\Delta(n)} \\   
   &= \Bigg(\prod_{\substack{\mathfrak{R}\in \restr{\mathrm{ER}_h(n)}{\geq n^{1/2}} \\  \mathfrak{R} \cap \mathfrak{R}_j = \varnothing \, (j=1,\ldots,k) }} (1 - \Pr(\mathfrak{R}))\Bigg) e^{2\Delta(n)} \\
   &\leq \exp\Bigg(-\sum_{\substack{\mathfrak{R}\in \restr{\mathrm{ER}_h(n)}{\geq n^{1/2}} \\  \mathfrak{R} \cap \mathfrak{R}_j = \varnothing \, (j=1,\ldots,k) }} \Pr(\mathfrak{R})\Bigg) e^{2\Delta(n)},
  \end{align*}
  where $\Delta(n)$ can be taken to be
  \[ \Delta(n) = \sum_{\substack{\mathfrak{R},\mathfrak{R}'\in \mathrm{ER}_h(n) \\ 1\leq |\mathfrak{R}\cap\mathfrak{R'}|\leq h-2}} \Pr\left( \mathfrak{R} \wedge \mathfrak{R}'\right). \]
  However, since $A(x) \leq M_2^{-1} x^{1/(h-1)}$, we know from Lemma \ref{upll} that $\Delta(n)$ is bounded, thus $e^{2\Delta(n)}<K$ for some constant $K>0$. For the term inside $\exp$, from \eqref{lbsqrt},
  \begin{align*}
   \sum_{\substack{\mathfrak{R}\in \restr{\mathrm{ER}_h(n)}{\geq n^{1/2}} \\  \mathfrak{R} \cap \mathfrak{R}_j = \varnothing \, (j=1,\ldots,k) }} \Pr(\mathfrak{R}&) \geq \mathbb{E}(\restr{\rho_{\omega,h}(n)}{\geq n^{1/2}}) - \mathbb{E}\left(\sum_{\ell\in \bigcup\{\mathfrak{R}_1,\ldots,\mathfrak{R}_k\}} \rho_{\omega,h-1}(n-\ell)\right) \\
   &= (1-o(1))\mathbb{E}(\rho_{\omega,h}(n)) - \sum_{\ell\in \bigcup\{\mathfrak{R}_1,\ldots,\mathfrak{R}_k\}} \mathbb{E}(\rho_{\omega,h-1}(n-\ell)).
  \end{align*}
  Since $A(n) \ll n^{1/(h-1)}$, we have by Lemma \ref{exct37} that $\mathbb{E}(\rho_{\omega,h-1}(n)) = O(1)$, therefore the summation in the last equality above is bounded above by $k\cdot hJ$ for some fixed $J>0$. Since $h$,$J$ are fixed and $k\leq t\mathbb{E}(\rho_{\omega,h}(n))$, we finally arrive at
  \begin{equation}
   \Pr(\{\mathfrak{R}_1,\ldots,\mathfrak{R}_k\} \text{ is maximal}) \leq Ke^{-(1-\eps_t -o(1))\mathbb{E}(\rho_{\omega,h}(n))} \label{fact2}
  \end{equation}
  for all $k$-sets $\{\mathfrak{R}_1,\ldots,\mathfrak{R}_k\}$ considered, where $\eps_t := t\cdot hJ$, which goes to $0$ as $t\to 0^{+}$.
  
  Keeping in mind that, again, $k!>k^ke^{-k}$ for all $k\geq 1$, plugging \eqref{fact1} and \eqref{fact2} into what we originally wanted to estimate yields
  \begin{align} \Pr&\big(\exists \mathcal{C}\subseteq\restr{\mathrm{ER}_h(n;\omega)}{\geq n^{1/2}} \text{ maxdisfam}: |\mathcal{C}| \leq t\mathbb{E}(\rho_{\omega,h}(n)) \big) \nonumber \\
  &\leq Ke^{-(1-\eps_t -o(1))\mathbb{E}(\rho_{\omega,h}(n))} \sum_{1\leq k\leq t\mathbb{E}(\rho_{\omega,h}(n))} \frac{\mathbb{E}(\rho_{\omega,h}(n))^k}{k!} \nonumber \\
  &\leq Ke^{-(1-\eps'_t -o(1))\mathbb{E}(\rho_{\omega,h}(n))} \sum_{1\leq k\leq t\mathbb{E}(\rho_{\omega,h}(n))} \left(\frac{\mathbb{E}(\rho_{\omega,h}(n))}{k}\right)^k, \nonumber
  \end{align}
  where $\eps'_t := \eps_t + t$, still going to $0$ with $t$. As a function of $k$, one can show that the maximum of $(\mathbb{E}(\rho_{\omega,h}(n))/k)^k$ is attained when $k = \mathbb{E}(\rho_{\omega,h}(n))/e$. Thus, choosing $t < 1/e$, we have
  \begin{align*}
   \sum_{1\leq k\leq t\mathbb{E}(\rho_{\omega,h}(n))} \left(\frac{\mathbb{E}(\rho_{\omega,h}(n))}{k}\right)^k &\leq t\mathbb{E}(\rho_{\omega,h}(n))\cdot t^{-t\mathbb{E}(\rho_{\omega,h}(n))} \\
   &\leq t\mathbb{E}(\rho_{\omega,h}(n))\cdot e^{-t\log(t)\mathbb{E}(\rho_{\omega,h}(n))},
  \end{align*}
  leading us to, in view of $\mathbb{E}(\rho_{\omega,h}(n)) \ll n^{\frac{1}{h-1}}$ (Lemma \ref{exct37}),
  \begin{align*}
   \Pr\big(\exists \mathcal{C}\subseteq\restr{\mathrm{ER}_h(n;\omega)}{\geq n^{1/2}} \text{ m.d.f.}\footnotemark : |\mathcal{C}| \leq t\mathbb{E}(\rho_{\omega,h}(n)) \big) \ll n^{\frac{1}{h-1}}e^{-(1-\eps''_t -o(1))\mathbb{E}(\rho_{\omega,h}(n))},
  \end{align*}
  where\footnotetext{m.d.f. = maxdisfam.} $\eps''_t := \eps'_t - t\log(t)$, which once more goes to $0$ with $t$.
  
  Repeating the argument at the end of Part 1, we know that for some $\beta>0$ independent of $\A$ we have $\mathbb{E}(\rho_{\omega,h}(n)) > \beta M_1^{h} \log(n)$ for all large $n$. Therefore, taking $t>0$ small and \emph{choosing $M_1>1$} large enough so that $(1-\eps''_t)\beta M_1^{h} > 2+\frac{1}{h-1}$, we get
  \[ \Pr\big(\exists \mathcal{C}\subseteq\restr{\mathrm{ER}_h(n;\omega)}{\geq n^{1/2}} \text{ maxdisfam}: |\mathcal{C}| \leq t\mathbb{E}(\rho_{\omega,h}(n)) \big) \ll n^{-2+o(1)}, \]
  from which we apply the Borel--Cantelli lemma to conclude that $\restr{\rhohat_{\omega, h}(n)}{\geq n^{1/2}} \uptxt{a.s.}{\gg} A(n)^h/n$, thus concluding our proof.
 \end{proof}
 
 Notice that in order to yield the conclusion of our previous result we only had to deal with $M_1$, which was done at the very end. We will only deal directly with $M_2$ in Theorem \ref{p314}. Since $\rhohat_{\omega,h}(n) \ll \rho_{\omega,h}(n)$, to prove that $\rho_{\omega,h}(n)$ has a.s. the right rate of growth we just need to show that
 \[ \rho_{\omega,h}(n) \uptxt{a.s}{\ll} \rhohat_{\omega,h}(n), \]
 and this is what the next two lemmas will be aiming at. This is immediate for $h=2$, thus the assumption $h\geq 3$ will be implicit. From Lemma \ref{o1rhat} we know that if $\mathcal{C}_n \subseteq \mathrm{ER}_{\ell}(n;\omega)$ is a maximal disjoint family of exact $h$-representations of $n$ then
 \begin{equation}
  \rho_{\omega,h}(n) \uptxt{a.s.}{\ll} \sum_{k\in\bigcup\mathcal{C}_n} \rhohat_{\omega,h-1}(n-k). \label{estno1}
 \end{equation}
 In this lemma, we use the fact that $\mathbb{E}(\rho_{\omega,h-1}(n)) = O(1)$ to deduce that $\rhohat_{\omega,h-1}(n) \uptxt{a.s.}{=} O(\log(n)/\log\log(n))$. This fact, however, is not sufficient to conclude from \eqref{estno1} the statement we want.\footnote{In fact, this $O(\log(n)/\log\log(n))$ seems to be a strong barrier to this method. Not having $\mathbb{E}(\rho_{\omega,h-1}(n)) \ll n^{-\eps}$ is also the reason we avoided the methods from Tao \& Vu \cite{tao06} (particularly Theorem 1.37, p. 35).} We can, however, draw this conclusion for certain partitions of $\N$. Letting $0<\eps< h^{-2}$, consider
 \begin{equation*}
  \mathbf{P}_1 := \left\{ n\in\N : A(n) \leq n^{1/(h-1)-\eps} \right\} \text{ and }~ \mathbf{P}_2 := \N\setminus \mathbf{P}_1.
 \end{equation*} 
  
 Before studying these partitions, let us first split $\mathrm{ER}_h(n;\omega)$ in view of Lemma \ref{rbulk}. We know that $\A$ is OR$_{+}$, thus its lower Matuszewska index $\mlow(A)$ is positive. With that in mind, fix some $0 < \gamma < \mlow(A)$ and take $\delta >0$ with
 \begin{equation}
  \delta < \frac{\eps (h-1)}{1-\gamma (h-1)}. \label{dltchc}
 \end{equation}
 Notice that the RHS is positive, for since $A(x)\ll x^{1/(h-1)}$ one must have $\mlow(A) \leq 1/(h-1)$. Now, split $\rho_{\omega,h}$ into $\rho_{\omega,h}(n) = \restr{\rho_{\omega,h}(n)}{\geq n^{1-\delta}} + \rho_{\omega,h}(n)\flat_{n^{1-\delta}}$, with $\restr{\mathcal{C}_{n}}{\geq n^{1-\delta}}$ and $\mathcal{C}_{n}\flat_{n^{1-\delta}}$ being maxdisfams in each corresponding set. The next lemma shows that $\rhohat_{\omega,h}$ will almost surely dominate $\rho_{\omega,h}$ in $\mathbf{P}_1$.
 
 \begin{lem}\label{p1ph}
  Let ``$\ll_1$'' denote ``$\ll$ as $n\to +\infty$ through $\mathbf{P}_1$''. In the above notation, following Lemma \ref{pholds} (i.e. having $M_1 >1$ sufficiently large), if $\mathbf{P}_1$ is infinite then in $\mathcal{S}_{\A}$ the following holds:
  \[ \rho_{\omega,h}(n) \uptxt{a.s.}{\ll}_1 \rhohat_{\omega,h}(n). \]
 \end{lem}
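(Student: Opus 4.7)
The plan is to use the split $\rho_{\omega,h}(n) = \restr{\rho_{\omega,h}(n)}{\geq n^{1-\delta}} + \rho_{\omega,h}(n)\flat_{n^{1-\delta}}$ introduced just above the statement and bound each piece separately by $O(\rhohat_{\omega,h}(n))$ almost surely as $n\to+\infty$ through $\mathbf{P}_1$. The key reason to believe this works is that on $\mathbf{P}_1$ the pointwise bound $A(n)\leq n^{1/(h-1)-\eps}$ is considerably stronger than the global bound $A(n)\ll n^{1/(h-1)}$, so that $A(m)^{h-1}/m$ decays polynomially in $n$ uniformly for $m$ comparable to $n$. This polynomial decay is what will power the Borel--Cantelli arguments below, and condition \eqref{dltchc} is what will let us pick a single $\delta$ making both halves of the split work.

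For the lower-bounded part I would copy the estimate used in Lemma \ref{o1rhat}, writing
\[ \restr{\rho_{\omega,h}(n)}{\geq n^{1-\delta}} \;\leq\; \sum_{k\in\bigcup\restr{\mathcal{C}_n}{\geq n^{1-\delta}}} \restr{\rho_{\omega,h-1}(n-k)}{\geq n^{1-\delta}} \;\leq\; h\,\restr{\rhohat_{\omega,h}(n)}{\geq n^{1-\delta}} \cdot \max_{k}\restr{\rho_{\omega,h-1}(n-k)}{\geq n^{1-\delta}}, \]
and observing that every $k$ in the outer sum satisfies $n-k\geq (h-1)n^{1-\delta}$. On $\mathbf{P}_1$, for $m$ in this range we have $\mathbb{E}(\rho_{\omega,h-1}(m))\ll n^{\delta-\eps(h-1)}$, so whenever $\delta<\eps(h-1)$ (which is comfortably permitted inside \eqref{dltchc}) the disjointness lemma combined with a union bound over $m$ and Borel--Cantelli yields $\restr{\rhohat_{\omega,h-1}(m)}{\geq n^{1-\delta}}=O(1)$ a.s.\ uniformly in $m$. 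The non-exact correction of Lemma \ref{o1rhat}, whose error terms are $O(1)$ a.s.\ by that same lemma, then upgrades this to $\restr{\rho_{\omega,h-1}(m)}{\geq n^{1-\delta}}=O(1)$ a.s., from which $\restr{\rho_{\omega,h}(n)}{\geq n^{1-\delta}}\uptxt{a.s.}{\ll_{1}}\rhohat_{\omega,h}(n)$ follows immediately.

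For the small-element part I would aim at the stronger statement $\rho_{\omega,h}(n)\flat_{n^{1-\delta}}=O(1)$ almost surely, which is negligible against $\rhohat_{\omega,h}(n)\gg\log n$ given by Lemma \ref{pholds}. A direct computation along the lines of Lemma \ref{t37} gives
\[ \mathbb{E}(\rho_{\omega,h}(n)\flat_{n^{1-\delta}}) \;\ll\; A(n^{1-\delta})\,\frac{A(n)^{h-1}}{n}, \]
and the Matuszewska estimate $A(n^{1-\delta})\ll n^{-\delta\gamma}A(n)$ (valid for any $\gamma<\mlow(A)$) rewrites this as $\ll n^{-\delta\gamma}\,\mathbb{E}(\rho_{\omega,h}(n))$. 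Combining with the $\mathbf{P}_1$ bound $\mathbb{E}(\rho_{\omega,h}(n))\ll n^{1/(h-1)-h\eps}$ and applying the disjointness lemma to the events making up $\rho_{\omega,h}(n)\flat_{n^{1-\delta}}$, one obtains a tail bound of the form $n^{T(1/(h-1)-h\eps-\delta\gamma)}/T!$. Provided the exponent inside is negative, raising $T$ to a large enough constant makes this bound summable in $n$ through $\mathbf{P}_1$, and Borel--Cantelli together with the same non-exact correction as above delivers $\rho_{\omega,h}(n)\flat_{n^{1-\delta}}=O(1)$ a.s.

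The main obstacle I expect is reconciling the two constraints on $\delta$ imposed by the two halves of the argument: the restricted part wants $\delta<\eps(h-1)$ while the small-element part wants $\delta\gamma>1/(h-1)-h\eps$. The precise shape of the denominator $1-\gamma(h-1)$ in \eqref{dltchc}, together with the freedom to push $\gamma$ up to (but not reaching) $\mlow(A)\leq 1/(h-1)$, is exactly what carves out a non-empty admissible range of $\delta$; for functions $f$ near the lower end of the Main Theorem's range (where $\mlow(A)$ is close to $1/h$) this window can be narrow, and at that point one has to exploit that $\mathbb{E}(\rho_{\omega,h}(n))$ is actually much smaller than the worst-case bound $n^{1/(h-1)-h\eps}$ for such $f$. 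Verifying this algebraic compatibility is the technical crux of the proof, but involves no further probabilistic input.
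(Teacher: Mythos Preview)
Your overall strategy---splitting $\rho_{\omega,h}(n)$ at the threshold $n^{1-\delta}$ and bounding the two pieces separately---matches the paper exactly, and your treatment of the restricted part $\restr{\rho_{\omega,h}(n)}{\geq n^{1-\delta}}$ is essentially the paper's Part~2. (The paper uses the Matuszewska bound $A(n^{1-\delta}) \ll n^{-\delta\gamma}A(n)$ to land on the constraint \eqref{dltchc} directly, whereas your cruder monotonicity bound $A(m)\leq A(n)$ forces the slightly stronger $\delta<\eps(h-1)$; either is fine.)

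The genuine gap is in the small-element piece $\rho_{\omega,h}(n)\flat_{n^{1-\delta}}$. Aiming for the stronger conclusion ``$=O(1)$ a.s.'' forces, as you note, the constraint $\delta\gamma > 1/(h-1)-h\eps$, and this is \emph{never} compatible with $\delta < \eps(h-1)$ under the standing assumption $0<\eps<h^{-2}$: combining the two would require $\eps(h-1)(h+\gamma(h-1))>1$, but since $\gamma(h-1)<1$ the left side is below $\eps(h^2-1)<1$. Your proposed escape (exploit that $\mathbb{E}(\rho_{\omega,h}(n))$ is much smaller near the lower end of the range) misidentifies the obstruction: the failure occurs when $A(n)$ sits near the \emph{upper} boundary $n^{1/(h-1)-\eps}$ of $\mathbf{P}_1$, where $\mathbb{E}(\rho_{\omega,h}(n))$ genuinely is of polynomial size $n^{1/(h-1)-h\eps}$. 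So the target $O(1)$ is simply unattainable, and no purely algebraic reconciliation of the two constraints exists.

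The paper sidesteps this by not aiming for $O(1)$ at all. Instead of a fixed threshold in the disjointness lemma, it uses the \emph{growing} threshold $T\lceil \rhohat_{\omega,h}(n)/\log(n)\rceil$ to obtain $\rhohat_{\omega,h}(n)\flat_{n^{1-\delta}} \uptxt{a.s.}{\ll} \rhohat_{\omega,h}(n)/\log(n)$ (globally, with no reference to $\mathbf{P}_1$). Together with the $O(\log n/\log\log n)$ bound on $\rhohat_{\omega,h-1}$ from Lemma~\ref{o1rhat}, this yields $\rho_{\omega,h}(n)\flat_{n^{1-\delta}} \uptxt{a.s.}{\ll} \rhohat_{\omega,h}(n)/\log\log(n)$, which is all that is needed. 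The point is that with a relative threshold the quantity controlling the tail bound is the \emph{ratio} $n^{-\delta\gamma/2^{h-2}}\log(n)$, which decays like $n^{-\vartheta}$ for some $\vartheta>0$ regardless of how large $\mathbb{E}(\rho_{\omega,h}(n))$ itself is---so no second constraint on $\delta$ is generated by this half of the argument.
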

 \begin{proof}
  We divide the proof into two parts. First, we show that $\rho_{\omega,h}(n)\flat_{n^{1-\delta}}$ does not contribute to the growth rate of $\rho_{\omega,h}(n)$. \medskip
  
  \noindent
  $\bullet \textbf{ Part 1:}$ $\textit{Showing}$ $\rho_{\omega,h}(n)\flat_{n^{1-\delta}} \uptxt{a.s.}{=} O\left(\rhohat_{\omega,h}(n)/\log\log(n) \right)$.
  
  Recall that this quantity is counting those representations in which at least one member is less than $n^{1-\delta}$. From \eqref{clncut} in Lemma \ref{rbulk},
  \begin{equation*}
   \mathbb{E}(\rho_{\omega,h}(n)\flat_{n^{1-\delta}} ) \ll n^{-\delta \gamma/2^{h-2}} \mathbb{E}(\rho_{\omega,h}(n) ),
  \end{equation*}
  thus, proceeding as in Part 1 from Lemma \ref{pholds}, we apply the disjointness lemma to estimate $|\bigcup \mathcal{C}_n\flat_{n^{1-\delta}}|$. Keeping in mind that for $k\geq 1$ it holds $k!>k^ke^{-k}$, for any integer $T>0$ it follows that
  \begin{align*}
   \Pr\left( \rhohat_{\omega,h}(n)\flat_{n^{1-\delta}} \geq T\left\lceil\frac{\rhohat_{\omega,h}(n)}{\log(n)}\right\rceil \right) &\leq \frac{\left( n^{-\delta \gamma/2^{h-2}}\mathbb{E}(\rho_{\omega,h}(n)) \right)^{T\left\lceil \frac{\rhohat_{\omega,h}(n)}{\log(n)} \right\rceil}}{(T\lceil \rhohat_{\omega,h}(n)/\log(n) \rceil )!} \\
   &\leq \left(\frac{e}{T} \frac{\mathbb{E}(\rho_{\omega,h}(n))}{\rhohat_{\omega,h}(n)} \frac{n^{-\delta \gamma/2^{h-2}}}{1/\log(n)} \right)^{T\left\lceil \frac{\rhohat_{\omega,h}(n)}{\log(n)} \right\rceil}.
  \end{align*}
  
  Since $\rhohat_{\omega,h}(n) \uptxt{a.s.}{=} \Theta(\mathbb{E}(\rho_{\omega,h}(n)))$ by Lemma \ref{pholds} and $\mathbb{E}(\rho_{\omega,h}(n)) \gg \log(n)$ by Lemma \ref{exct37}, let $\vartheta>0$ such that $\log(n)n^{-\delta\gamma/2^{h-2}} \ll n^{-\vartheta}$ and consider $T$ large enough so as to cancel out all constant terms inside the parentheses. This will yield
  \begin{align*}
   \left(\frac{e}{T} \frac{\mathbb{E}(\rho_{\omega,h}(n))}{\rhohat_{\omega,h}(n)} \frac{n^{-\delta \gamma/2^{h-2}}}{1/\log(n)} \right)^{T\left\lceil \frac{\rhohat_{\omega,h}(n)}{\log(n)} \right\rceil} &\ll \left(\log(n) n^{-\delta \gamma/2^{h-2}}  \right)^{T\left\lceil \frac{\rhohat_{\omega,h}(n)}{\log(n)} \right\rceil} \\
   &\ll n^{-\vartheta T\left\lceil \frac{\rhohat_{\omega,h}(n)}{\log(n)} \right\rceil}.
  \end{align*}
  Therefore, picking $T$ large enough to also have $\vartheta T\lceil \rhohat_{\omega,h}(n)/\log(n) \rceil > 2$ from some point onward, we conclude that $\rhohat_{\omega,h}(n)\flat_{n^{1-\delta}} \uptxt{a.s.}{\ll} \rhohat_{\omega,h}(n)/\log(n)$ from the Borel--Cantelli lemma. Finally, the same reasoning behind \eqref{estno1} may be used to derive
  \begin{equation*}
   \rho_{\omega,h}(n)\flat_{n^{1-\delta}} \uptxt{a.s.}{\ll} \sum_{k\in\bigcup \mathcal{C}_n\flat_{n^{1-\delta}}} \rhohat_{\omega,h-1}(n-k)\flat_{n^{1-\delta}},
  \end{equation*}
  thus, since $\rhohat_{\omega,h-1}(n) \uptxt{a.s.}{=} O(\log(n)/\log\log(n))$ from Lemma \ref{o1rhat}, we get:
  \begin{align*}
   \rho_{\omega,h}(n)\flat_{n^{1-\delta}} &\uptxt{a.s.}{\ll} \left|\bigcup\mathcal{C}_n\flat_{n^{1-\delta}}\right| \cdot \max_{k\in \bigcup\mathcal{C}_n\flat_{n^{1-\delta}}} \bigg(\rhohat_{\omega,h-1}(n-k)\flat_{n^{1-\delta}}\bigg) \\
   &\uptxt{a.s.}{\ll} \frac{\rhohat_{\omega,h}(n)}{\log(n)} \cdot \frac{\log(n)}{\log\log(n)} = \frac{\rhohat_{\omega,h}(n)}{\log\log(n)},
  \end{align*}
  as required. \medskip
  
  \noindent
  $\bullet \textbf{ Part 2:}$ $\textit{Showing}$ $\restr{\rho_{\omega,h}(n)}{\geq n^{1-\delta}} \uptxt{a.s.}{\ll}_1 \rhohat_{\omega,h}(n)$.
  
  Recall that $\delta = \delta(\eps, \gamma)$, i.e. it depends on both $\eps$ and $\gamma$, where $0< \gamma< \mlow(A)$. By the definition of $\mlow(A)$, we know there is $m>0$ such that for large $x$, for every $\lambda \geq 1$ the following holds:
  \[ A(\lambda x) > m\lambda^{\gamma} A(x). \]
  Rearranging this, for large $x$ we have
  \[ \frac{A(x^{1-\delta})^{h-1}}{x^{1-\delta}} < \frac{1}{m^{h-1}}x^{\delta(1-\gamma (h-1))} \frac{A(x)^{h-1}}{x}, \]
  thus, in view of Lemma \ref{exct37},
  \begin{equation*}
   \mathbb{E}(\rho_{\omega,h-1}(n^{1-\delta})) \ll n^{\delta(1-\gamma(h-1))} \mathbb{E}(\rho_{\omega,h-1}(n)).
  \end{equation*}
  The partition $\mathbf{P}_1$ was chosen because $\mathbb{E}(\rho_{\omega,h-1}(n)) \ll_1 n^{-\eps(h-1)}$, and thus
  \[ \mathbb{E}(\rho_{\omega,h-1}(n^{1-\delta})) \ll_1 n^{\delta(1-\gamma(h-1)) -\eps(h-1)}. \]
  Therefore, by our choice of $\delta$ in \eqref{dltchc}, we have that for some $\vartheta >0$ it holds:
  \[ \mathbb{E}(\rho_{\omega,h-1}(k)) \ll k^{-\vartheta} \text{ as } k\to +\infty \text{ through } \mathbf{P}_1 \cup \{[n^{1-\delta}, n] : n \in \mathbf{P}_1 \}, \]
  
  Using the same strategy used in items \eqref{l51is}* and \eqref{l51iis}* at the proof of Lemma \ref{o1rhat}, our argument to bound $\restr{\rho_{\omega,h}(n)}{\geq n^{1-\delta}}$ will be an application of the disjointness lemma followed by Borel--Cantelli. From disjointness, for any integer $T>2/\vartheta$ we will have
  \begin{equation*}
   \Pr\left(\rhohat_{\omega,h-1}(k) \geq T \right) \leq \frac{\mathbb{E}(\rho_{\omega,h-1}(k))^{T}}{T!} \ll k^{-2}
  \end{equation*}
  as $k\to +\infty$ through $\mathbf{P}_1 \cup \{[n^{1-\delta}, n] : n \in \mathbf{P}_1\}$; by the Borel--Cantelli lemma, this implies $\rhohat_{\omega,h-1}(k) \uptxt{a.s.}{=} O(1)$ as $k$ varies throughout this domain.
  
  Now, just as in \eqref{estno1}, we have  
  \begin{equation}
   \restr{\rho_{\omega,h}(n)}{\geq n^{1-\delta}} \uptxt{a.s.}{\ll} \sum_{k\in\bigcup \restr{\mathcal{C}_n}{\geq n^{1-\delta}}} \restr{\rhohat_{\omega,h-1}(n-k)}{\geq n^{1-\delta}}. \label{c1e1p1}
  \end{equation}  
  Since the restrictions on the RHS imply that the corresponding sum is occurring only over $n-k \geq n^{1-\delta}$, and also since $\rhohat_{\omega,h-1}(k)$ is conveniently limited on the domain needed, it follows that
  \[ \restr{\rho_{\omega,h}(n)}{\geq n^{1-\delta}} \uptxt{a.s.}{\ll}_1 \left| \bigcup\restr{\mathcal{C}_{n}}{\geq n^{1-\delta}} \right|, \]
  which is obviously $O(\rhohat_{\omega,h}(n))$.
 \end{proof}

 It now only remains $\mathbf{P}_2$ for us to deal with. We will, however, not work with it in its entirety; instead of breaking the aforementioned ``$\log(n)/\log\log(n)$ barrier'', we will work around it. Our method is inspired by the idea of \emph{$r$-star matchings} described in Subsection 3.2 of Warnke \cite{war17}.
 
 First, to make the notation in the next lemma less charged, let
 \[ L(n) := \frac{\log(n)}{\log\log(n)}. \]
 Next, fix some $\widetilde{\eps} > 0$ and define
 \begin{equation*}
  \mathbf{P}_{21} := \left\{ n\in\mathbf{P}_2 : A(n) \leq n^{1/(h-1)} L(n)^{-\widetilde{\eps}} \right\} \text{ and }~ \mathbf{P}_{22} := \mathbf{P}_{2}\setminus \mathbf{P}_{21}.
 \end{equation*}
 We now use the same $\gamma < \mlow(A)$ from before to take
 \begin{equation}
  \kappa > 2^{h-1}/\gamma, \label{chckpp}
 \end{equation}
 and thus split $\rho_{\omega,h}$ into $\rho_{\omega,h}(n) = \restr{\rho_{\omega,h}(n)}{\geq nL(n)^{-\kappa}} + \rho_{\omega,h}(n)\flat_{\geq nL(n)^{-\kappa}}$, with $\restr{\mathcal{C}_{n}}{\geq nL(n)^{-\kappa}}$ and $\mathcal{C}_{n}\flat_{nL(n)^{-\kappa}}$ being maxdisfams in each corresponding set. Our next lemma will show that, on the occasion of $\mathbf{P}_{22}$ being finite, $\rhohat_{\omega,h}$ will almost surely dominate $\rho_{\omega,h}$ in $\mathbf{P}_{21}$, and thus in the whole $\mathbf{P}_2$. The finiteness of $\mathbf{P}_{22}$ is a key hypothesis for the second part of the proof.
 
 \begin{lem}\label{p2ph}
  Let ``$\ll_{2}$'' denote ``$\ll$ as $n\to +\infty$ through $\mathbf{P}_{2}$''. In the above notation, following Lemma \ref{pholds} (i.e. having $M_1 > 1$ sufficiently large), if $\mathbf{P}_{22}$ is finite then in $\mathcal{S}_{\A}$ the following holds:
  \[ \rho_{\omega,h}(n) \uptxt{a.s.}{\ll}_{2} \rhohat_{\omega,h}(n). \]
 \end{lem}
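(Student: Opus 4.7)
The plan is to mirror the two-part structure of Lemma \ref{p1ph}, splitting $\rho_{\omega,h}(n)$ at the logarithmic cutoff $nL(n)^{-\kappa}$ instead of the polynomial $n^{1-\delta}$ used there.

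First, for the $\flat$-piece $\rho_{\omega,h}(n)\flat_{nL(n)^{-\kappa}}$, I would use that the choice $\kappa > 2^{h-1}/\gamma$ is exactly what makes Lemma \ref{rbulk} with $\eps = L(n)^{-\kappa}$ give
\[
\mathbb{E}\bigl(\rho_{\omega,h}(n)\flat_{nL(n)^{-\kappa}}\bigr) \ll L(n)^{-\kappa\gamma/2^{h-2}}\,\mathbb{E}(\rho_{\omega,h}(n))
\]
with exponent strictly greater than $2$. Following Part 1 of Lemma \ref{p1ph} verbatim, I then apply the disjointness lemma with threshold of order $\rhohat_{\omega,h}(n)/\log(n)$ and obtain Borel--Cantelli summability from the polynomial lower bound $\mathbb{E}(\rho_{\omega,h}(n))\gg n^{1/(h-1)-h\eps}$ that holds on $\mathbf{P}_2$ by $A(n) > n^{1/(h-1)-\eps}$. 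Combining the resulting $\rhohat_{\omega,h}(n)\flat_{nL(n)^{-\kappa}} \uptxt{a.s.}{\ll} \rhohat_{\omega,h}(n)/\log(n)$ with $\rhohat_{\omega,h-1}(m)\uptxt{a.s.}{=} O(L(m))$ from Lemma \ref{o1rhat} gives $\rho_{\omega,h}(n)\flat_{nL(n)^{-\kappa}} \uptxt{a.s.}{=} O\bigl(\rhohat_{\omega,h}(n)/\log\log(n)\bigr)$.

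Second, for the $\restr{\cdot}{\geq}$-piece, I would use the hypothesis that $\mathbf{P}_{22}$ is finite: for all but finitely many $n \in \mathbf{P}_2$ one has $A(n) \leq n^{1/(h-1)} L(n)^{-\widetilde{\eps}}$, and combining this with the Matuszewska bound $A(\mu n) \ll \mu^{\gamma} A(n)$ for $\mu\in(0,1]$ yields, uniformly for $m = n-k$ with $k \geq nL(n)^{-\kappa}$,
\[
\mathbb{E}(\rho_{\omega,h-1}(m)) \ll L(n)^{\kappa(1-\gamma(h-1))-\widetilde{\eps}(h-1)}.
\]
If $\widetilde{\eps}$ is chosen at the very outset so that $\widetilde{\eps}(h-1) > \kappa(1-\gamma(h-1))$, the exponent is $\leq -C$ for some fixed $C>0$. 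The aim would then be to use disjointness plus Borel--Cantelli to deduce $\rhohat_{\omega,h-1}(m)\uptxt{a.s.}{=} O(1)$ uniformly on the $m$-range, whence $\restr{\rho_{\omega,h}(n)}{\geq nL(n)^{-\kappa}} \uptxt{a.s.}{\ll} \bigl|\bigcup \restr{\mathcal{C}_n}{\geq nL(n)^{-\kappa}}\bigr|\cdot O(1) \ll \rhohat_{\omega,h}(n)$.

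The main obstacle I expect is the Borel--Cantelli summability in this second step: the decay $L(n)^{-C}$ is only a negative power of $L(n)$, not of $n$, so the naive sum of $\Pr(\rhohat_{\omega,h-1}(m)\geq T) \leq L(n)^{-CT}$ over $m$ and $n$ does not converge for any fixed $T$---in sharp contrast with Part 2 of Lemma \ref{p1ph}, where $\mathbf{P}_1$ afforded polynomial decay in $n$. The resolution likely combines an adaptive threshold $T = T_n\asymp L(n)$ (which on its own only delivers $\rhohat_{\omega,h-1}(m)\uptxt{a.s.}{=} O(L(n))$, insufficient) with the $r$-star matching strategy of Warnke \cite{war17} flagged in the preamble---i.e.\ counting only those $(h-1)$-representations genuinely compatible with the restricted hypergraph rather than bounding $\rho_{h-1}$ pointwise. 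A direct second-moment or correlation-inequality argument applied to $\restr{\rho_{\omega,h}(n)}{\geq nL(n)^{-\kappa}}$ itself, analogous to Part 2 of Lemma \ref{pholds}, may also enter.
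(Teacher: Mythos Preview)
Your Part 1 is essentially the paper's argument with a different threshold ($\rhohat_{\omega,h}(n)/\log(n)$ instead of $\rhohat_{\omega,h}(n)/L(n)^{3/2}$); both work because on $\mathbf{P}_2$ the exponent in the disjointness bound grows like $n^{\alpha}$, so any base tending to zero already gives summability.

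Part 2, however, is only a diagnosis, not a proof. You correctly see that $\Pr(\rhohat_{\omega,h-1}(m)\geq T)\ll L(n)^{-CT}$ is not summable in $n$ for any fixed $T$, and that taking $T\asymp L(n)$ merely recovers $\rhohat_{\omega,h-1}(m)\ll L(n)$, which is what Lemma~\ref{o1rhat} already gives. The resolution in the paper is \emph{not} a second-moment or correlation-inequality argument on $\restr{\rho_{\omega,h}(n)}{\geq nL(n)^{-\kappa}}$; it is the following concrete implementation of the ``$r$-star'' idea. Fix a large constant $T$ and split the vertex set $\bigcup\restr{\mathcal{C}_n}{\geq nL(n)^{-\kappa}}$ into $\mathcal{A}^{(T)}_n$ (those $k$ with $\restr{\rho_{\omega,h-1}(n-k)}{\geq nL(n)^{-\kappa}}<T$) and $\mathcal{B}^{(T)}_n$ (the rest). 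The $\mathcal{A}^{(T)}_n$-sum is trivially $\leq T\cdot|\mathcal{A}^{(T)}_n|\ll \rhohat_{\omega,h}(n)$. For $\mathcal{B}^{(T)}_n$, pass to the aggregate $\widetilde{\mathcal{B}}^{(T)}_n:=\bigcup_{k\in\mathcal{B}^{(T)}_n}\mathrm{ER}_{h-1}(n-k;\omega)$ and its maxdisfam count $\widehat{\beta}^{(T)}(n)$. Since each bad $k$ contributes at least $T$ representations but the probability of $k$ being bad is $\ll L(n)^{-T\widetilde{\eps}(h-1)}$, one gets $\mathbb{E}(|\widetilde{\mathcal{B}}^{(T)}_n|)\ll L(n)^{1-T\widetilde{\eps}(h-1)}\cdot A(n)^h/n$. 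Now apply the disjointness lemma to $\widehat{\beta}^{(T)}(n)$ with threshold $\lceil (A(n)^h/n)L(n)^{-3}\rceil$: the base is $\ll L(n)^{4-T\widetilde{\eps}(h-1)}<1$ for $T$ large, and crucially the \emph{exponent} is $\gg n^{\alpha}L(n)^{-3}$ on $\mathbf{P}_2$, which is what yields Borel--Cantelli summability. This gives $|\mathcal{B}^{(T)}_n|\uptxt{a.s.}{\ll}_2 \widehat{\beta}^{(T)}(n)\,L(n)\ll (A(n)^h/n)L(n)^{-2}$, and then the $\mathcal{B}^{(T)}_n$-sum is $\ll |\mathcal{B}^{(T)}_n|\cdot L(n)=o(\rhohat_{\omega,h}(n))$.

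A secondary issue: your expectation bound via the Matuszewska inequality produces the extra factor $L(n)^{\kappa(1-\gamma(h-1))}$ and forces $\widetilde{\eps}(h-1)>\kappa(1-\gamma(h-1))$. This is unnecessary and in fact harmful: the paper bounds $\mathbb{E}(\restr{\rho_{\omega,h-1}(n-k)}{\geq nL(n)^{-\kappa}})\ll L(n)^{-\widetilde{\eps}(h-1)}$ directly from $A(m)\leq m^{1/(h-1)}L(m)^{-\widetilde{\eps}}$ (which, $\mathbf{P}_{22}$ being finite, holds for \emph{all} large $m$, not just $m\in\mathbf{P}_2$) together with $L(k_i)\sim L(n)$ on the restricted range. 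This keeps $\widetilde{\eps}$ unconstrained, which is essential because the deduction of the Main Theorem right after this lemma requires that $\widetilde{\eps}$ can be taken arbitrarily small.
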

 \begin{proof}
  Just like in Lemma \ref{p1ph}, we divide the proof into two parts, the first of which shows that $\rho_{\omega,h}(n)\flat_{nL(n)^{-\kappa}}$ does not contribute to the growth rate of $\rho_{\omega,h}(n)$. \medskip
  
  \noindent
  $\bullet \textbf{ Part 1:}$ $\textit{Showing}$ $\rho_{\omega,h}(n)\flat_{nL(n)^{-\kappa}} \uptxt{a.s.}{\ll}_{2} \rhohat_{\omega,h}(n)/L(n)^{1/2}$.
  
  Recall that this quantity is counting those representations in which at least one member is less than $nL(n)^{-\kappa}$. From \eqref{clncut} in Lemma \ref{rbulk},
  \begin{equation*}
   \mathbb{E}(\rho_{\omega,h}(n)\flat_{nL(n)^{-\kappa}} ) \ll L(n)^{-\kappa\gamma/2^{h-2}} \mathbb{E}(\rho_{\omega,h}(n) ),
  \end{equation*}
  From our choice of $\kappa$ in \eqref{chckpp}, the exponent of $L(n)$ may be taken to be $-2$. Proceeding as in Part 1 from Lemma \ref{p1ph}, we apply the disjointness lemma to estimate $|\bigcup \mathcal{C}_n\flat_{nL(n)^{-\kappa}}|$. Keeping in mind that for $k\geq 1$ it holds $k!>k^k e^{-k}$, for any integer $T>0$ it then follows that
  \begin{align*}
   \Pr\left( \rhohat_{\omega,h}(n)\flat_{nL(n)^{-\kappa}} \geq T\left\lceil\frac{\rhohat_{\omega,h}(n)}{L(n)^{3/2}}\right\rceil \right) &\leq \frac{\left( L(n)^{-2}\mathbb{E}(\rho_{\omega,h}(n)) \right)^{T\left\lceil \frac{\rhohat_{\omega,h}(n)}{L(n)^{3/2}} \right\rceil}}{(T\lceil \rhohat_{\omega,h}(n)/L(n)^{3/2} \rceil )!} \\
   &\leq \left(\frac{e}{T} \frac{\mathbb{E}(\rho_{\omega,h}(n))}{\rhohat_{\omega,h}(n)} \frac{L(n)^{-2}}{L(n)^{-3/2}} \right)^{T\left\lceil \frac{\rhohat_{\omega,h}(n)}{L(n)^{3/2}} \right\rceil}.
  \end{align*}
  By Lemma \ref{pholds} we have $\rhohat_{\omega,h}(n) \uptxt{a.s.}{=} \Theta(\mathbb{E}(\rho_{\omega,h}(n)))$. Hence, considering $T$ large enough to cancel out the constant terms inside the parentheses, we have
  \[ \left(\frac{e}{T} \frac{\mathbb{E}(\rho_{\omega,h}(n))}{\rhohat_{\omega,h}(n)} \frac{L(n)^{-2}}{L(n)^{-3/2}} \right)^{T\left\lceil \frac{\rhohat_{\omega,h}(n)}{L(n)^{3/2}} \right\rceil} \ll L(n)^{-\frac{1}{2}T\left\lceil \frac{\rhohat_{\omega,h}(n)}{L(n)^{3/2}} \right\rceil} \]
  From Lemma \ref{exct37} and the definition of $\mathbf{P}_2$, we know that $\mathbb{E}(\rho_{\omega,h}(n)) \gg_2 n^{\alpha}$ for some $\alpha >0$. One can then deduce that the sum of this probabilities for $n\in\mathbf{P}_2$ will converge, and thus from the Borel--Cantelli lemma we may conclude that $\rhohat_{\omega,h}(n)\flat_{nL(n)^{-\kappa}} \uptxt{a.s.}{\ll}_2 \rhohat_{\omega,h}(n)/L(n)^{3/2}$.
  
  Finally, once again the reasoning behind \eqref{estno1} may be used to deduce
  \begin{equation*}
   \rho_{\omega,h}(n)\flat_{nL(n)^{-\kappa}} \uptxt{a.s.}{\ll} \sum_{k\in\bigcup \mathcal{C}_n\flat_{nL(n)^{-\kappa}}} \rhohat_{\omega,h-1}(n-k)\flat_{nL(n)^{-\kappa}},
  \end{equation*}
  hence, since $\rhohat_{\omega,h-1}(n) \uptxt{a.s.}{=} O(L(n))$ from Lemma \ref{o1rhat}, we arrive at
  \begin{align*}
   \rho_{\omega,h}(n)\flat_{nL(n)^{-\kappa}} &\uptxt{a.s.}{\ll} \left|\bigcup\mathcal{C}_n\flat_{nL(n)^{-\kappa}}\right| \cdot \max_{k\in \bigcup\mathcal{C}_n\flat_{nL(n)^{-\kappa}}} \bigg(\rhohat_{\omega,h-1}(n-k)\flat_{nL(n)^{-\kappa}}\bigg) \\
   &\uptxt{a.s.}{\ll}_2 \frac{\rhohat_{\omega,h}(n)}{L(n)^{3/2}} \cdot L(n) = \frac{\rhohat_{\omega,h}(n)}{L(n)^{1/2}},
  \end{align*}
  as required. \medskip
  
  \noindent
  $\bullet \textbf{ Part 2:}$ $\textit{If}$ $\mathbf{P}_{22}$ $\textit{is finite, then}$ $\restr{\rho_{\omega,h}(n)}{\geq nL(n)^{-\kappa}} \uptxt{a.s.}{\ll}_{2} \rhohat_{\omega,h}(n)$.
  
  First, just as in \eqref{c1e1p1}, we have
  \begin{equation}
   \restr{\rho_{\omega,h}(n)}{\geq nL(n)^{-\kappa}} \uptxt{a.s.}{\ll} \sum_{k\in \bigcup\restr{\mathcal{C}_n}{\geq nL(n)^{-\kappa}}} \restr{\rho_{\omega,h-1}(n-k)}{\geq nL(n)^{-\kappa}} \label{estnoF}
  \end{equation}
  Given a big integer $T\geq 1$, split $\bigcup\restr{\mathcal{C}_n}{\geq nL(n)^{-\kappa}}$ into the following two sets:
  \begin{align*}
   \mathcal{A}^{(T)}_{n} &:= \left\{ k\in \bigcup\restr{\mathcal{C}_n}{\geq nL(n)^{-\kappa}} : \restr{\rho_{\omega,h-1}(n-k)}{\geq nL(n)^{-\kappa}} < T \right\} \\
   \mathcal{B}^{(T)}_{n} &:= \left\{ k\in \bigcup\restr{\mathcal{C}_n}{\geq nL(n)^{-\kappa}} : \restr{\rho_{\omega,h-1}(n-k)}{\geq nL(n)^{-\kappa}} \geq T \right\}
  \end{align*}
  Now, consider the set
  \[ \widetilde{\mathcal{B}}^{(T)}_{n} := \bigcup_{k\in\mathcal{B}^{(T)}_{n}} \mathrm{ER}_{h-1}(n-k; \omega), \]
  and the following maxdisfam-type counting function:  
  \[ \widehat{\beta}^{(T)}(n) := \max\left\{|\mathcal{D}|: \mathcal{D}\subseteq \widetilde{\mathcal{B}}^{(T)}_{n} \text{ is a maximal disjoint family}\right\} \]
  From Lemma \ref{o1rhat} we know $r_{\omega,h-1}(n) \uptxt{a.s.}{=} O(L(n))$. Thus, letting $\mathcal{D} \subseteq \widetilde{\mathcal{B}}^{(T)}_{n}$ be a maximal disjoint family, we have
  \begin{align}
   |\mathcal{B}^{(T)}_{n}| &\leq T^{-1} \cdot |\widetilde{\mathcal{B}}^{(T)}_{n}| \nonumber \\
   &\leq T^{-1} \cdot \sum_{k\in\bigcup\mathcal{D}} \rho_{\omega,h-1}(n-k) \nonumber \\
   &\uptxt{a.s.}{\ll} \widehat{\beta}^{(T)}(n) \cdot L(n). \label{btbj}
  \end{align}
  
  Knowing that $A(n) \leq n^{1/(h-1)}L(n)^{-\widetilde{\eps}}$ for all but finitely many $n$ in $\mathbf{P}_2$, and also that $L(nL(n)^{-\kappa})\sim L(n)$, in view of Lemma \ref{exct37} and Remark \ref{imprem} one can see that $\mathbb{E}(\restr{\rho_{\omega,h-1}(n-k)}{\geq nL(n)^{-\kappa}}) < \eta \cdot L(n)^{-\widetilde{\eps}(h-1)}$ for all $k$ when $n$ is sufficiently large, where $\eta > 0$ is some constant not depending on $\A$. Thus, for such large $n$, since $T! > T^T e^{-T}$, by the disjointness lemma:
  \begin{align*}
   \Pr\left(\restr{\rho_{\omega,h-1}(n-k)}{\geq nL(n)^{-\kappa}} \geq T \right) &\leq \frac{1}{T!} \left(\eta \cdot L(n)^{-\widetilde{\eps}(h-1)}\right)^{T} \\
   &\leq \left(\frac{\eta \cdot e}{T}\right)^{T} L(n)^{-T\widetilde{\eps}(h-1)}.
  \end{align*}
  Keeping in mind again that $\rho_{\omega,h-1}(n)\uptxt{a.s.}{\ll} L(n)$, we can estimate $|\widetilde{\mathcal{B}}^{(T)}_{n}|$ by
  \[ |\widetilde{\mathcal{B}}^{(T)}_{n}| \leq J\cdot L(n) \sum_{k\in \bigcup\restr{\mathcal{C}_n}{\geq nL(n)^{-\kappa}}} \mathcal{I}_{\{\restr{\rho_{\omega,h-1}(n-k)}{\geq nL(n)^{-\kappa}} \geq T\}}, \]
  for some constant $J$ and all large $n$. Hence
  \[ \mathbb{E}(|\widetilde{\mathcal{B}}^{(j)}_{n}|) \leq c_T\cdot L(n)^{1-T\widetilde{\eps}(h-1)}\cdot \frac{A(n)^{h}}{n}, \]
  where $c_T := J\cdot (\eta e/T)^{T}$. With this, we can then apply the disjointness lemma again, this time to estimate $\widehat{\beta}^{(T)}(n)$. We have
  \begin{align*}
   \Pr\left(\widehat{\beta}^{(T)}(n) \geq \left\lceil\frac{A(n)^h}{n}\frac{1}{L(n)^3}\right\rceil \right) &\leq \frac{(c_T\cdot L(n)^{1-T\widetilde{\eps}(h-1)}A(n)^h/n)^{ \left\lceil\frac{A(n)^h}{n}\frac{1}{L(n)^3}\right\rceil}}{\left\lceil A(n)^h/n \cdot L(n)^{-3}\right\rceil !} \\
   &\leq (c'_T \cdot L(n))^{(4-T\widetilde{\eps}(h-1)) \left\lceil\frac{A(n)^h}{n}\frac{1}{L(n)^3}\right\rceil},
  \end{align*}
  where $c'_T = c_T\cdot e$. Let $T$ be large enough so that $4-T\widetilde{\eps}(h-1) < 0$. Since the function in the exponent is at least $n^{\alpha}$ for some $\alpha>0$ in $\mathbf{P}_2$, it is not hard to see that the sum of the above probabilities for $n\in \mathbf{P}_2$ will converge. Therefore, from \eqref{btbj} and the Borel--Cantelli lemma we deduce
  \begin{equation}
   |\mathcal{B}^{(T)}_{n}| \uptxt{a.s.}{\ll}_{2} \frac{A(n)^h}{n}\frac{1}{L(n)^2} \label{happyending}
  \end{equation}
  
  Going back to \eqref{estnoF}, notice that from $\rho_{\omega,h-1}(n)\uptxt{a.s.}{\ll} L(n)$ we get
  \begin{align*}
  \sum_{k\in\mathcal{B}^{(T)}_n} \restr{\rhohat_{\omega,h-1}(n-k)}{\geq nL(n)^{-\kappa}} &\ll |\mathcal{B}^{(T)}_{n}|\cdot L(n) \\
  &\uptxt{a.s.}{\ll}_{2} \frac{A(n)^h}{n}\frac{1}{L(n)},
  \end{align*}
  thus, since $\bigcup\restr{\mathcal{C}_n}{\geq nL(n)^{-\kappa}} = \mathcal{A}^{(T)}_n \cup \mathcal{B}^{(T)}_n$,
  \[ \sum_{k\in \bigcup\restr{\mathcal{C}_n}{\geq nL(n)^{-\kappa}}} \restr{\rhohat_{\omega,h-1}(n-k)}{\geq nL(n)^{-\kappa}} \uptxt{a.s.}{\ll}_{2} T\cdot |\mathcal{A}^{(T)}_n| + \frac{A(n)^h}{n}\frac{1}{L(n)}. \]
  Finally, we know $|\mathcal{A}^{(T)}_n| = O(\rhohat_{\omega,h}(n))$, thus, since from Lemma \ref{pholds} we have $\frac{A(n)^h}{n}\frac{1}{L(n)} \uptxt{a.s.}{=} o(\rhohat_{\omega,h}(n))$, we conclude that $\restr{\rho_{\omega,h}(n)}{\geq nL(n)^{-\kappa}} \uptxt{a.s.}{\ll}_{2} \rhohat_{\omega,h}(n)$.
 \end{proof}
 
 It then follows immediately from \eqref{loll2} in Lemma \ref{o1rhat} that, when $\mathbf{P}_{22}$ is finite, $r_{\omega,h}(n)$ has a.s. the right rate of growth. Since $\widetilde{\eps}$ can be chosen arbitrarily small in the definition of $\mathbf{P}_{21}$, the case $\ell=0$ of the Main Theorem follows immediately.\footnote{More precisely, it follows from the fact that $\mathcal{S}_{\A}$ is a non-empty probability space.} The next result shall deal with the further cases. We actually show something a little more general: if one could prove that $r_{\omega,h}(n)$ has a.s. the right growth order regardless of $\mathbf{P}_{22}$ being finite or not, then, provided $M_1$,$M_2$ are large enough, $r_{\omega,h+\ell}(n)$ will also concentrate around its mean for all $\ell \geq 0$.
  
 \begin{thm}\label{p314}
  Suppose there is $M^{\dagger} \in\R_{+}$ such that, in the construction of $\A$, if $M_1,M_2>M^{\dagger}$ then in $\mathcal{S}_{\A}$ the following holds:
  \[ \rho_{\omega,h}(n) \uptxt{a.s.}{\ll} \rhohat_{\omega,h}(n). \]
  Then there must be $M^{\ddagger}\in\R_{+}$ such that $M_1,M_2>M^{\ddagger}$ implies
  \[ r_{\omega,h+\ell}(n) \uptxt{a.s.}{=} \Theta\left(\frac{A(n)^{h+\ell}}{n}\right),\quad \forall \ell \geq 0 \]
 \end{thm}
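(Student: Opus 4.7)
The plan is to induct on $\ell$. The case $\ell = 0$ will follow directly from the hypothesis combined with Lemmas~\ref{pholds} and~\ref{o1rhat}; the inductive step will exploit the convolution recursion
\[ r_{\omega,h+\ell}(n) = \sum_{k\leq n} r_{\omega,h+\ell-1}(k)\,\mathbbm{1}_\omega(n-k) \]
together with a Chernoff concentration argument. I would set $M^{\ddagger}$ equal to the maximum of $M^{\dagger}$ and the thresholds required by Lemmas~\ref{pholds} and~\ref{o1rhat}, and anticipate that no further enlargement will be necessary for the induction to go through.

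For the base case, Lemma~\ref{pholds} gives $\rhohat_{\omega,h}(n) \uptxt{a.s.}{=} \Theta(A(n)^h/n)$; combining with the hypothesis $\rho_{\omega,h}(n) \uptxt{a.s.}{\ll} \rhohat_{\omega,h}(n)$ and the trivial $\rho_{\omega,h}(n) \geq \rhohat_{\omega,h}(n)$ yields $\rho_{\omega,h}(n) \uptxt{a.s.}{=} \Theta(A(n)^h/n)$. Estimate~\eqref{loll2} in Lemma~\ref{o1rhat} bounds non-exact $h$-representations of $n$ by $O(\log(n)/\log\log(n))$ a.s., which is negligible against $A(n)^h/n \gg \log(n)$ (the latter from~\eqref{impbnd}). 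Hence $r_{\omega,h}(n) = h!\,\rho_{\omega,h}(n) + O(\log(n)/\log\log(n)) \uptxt{a.s.}{=} \Theta(A(n)^h/n)$, closing the base case.

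For the inductive step, assume $r_{\omega,h+\ell-1}(n) \uptxt{a.s.}{=} \Theta(A(n)^{h+\ell-1}/n)$ and split the recursion at $n/2$. The tail $\sum_{n/2<k\leq n} r_{\omega,h+\ell-1}(k)\mathbbm{1}_\omega(n-k)$ is dominated by $|\omega\cap[0,n/2]|\cdot\max_{n/2<k\leq n}r_{\omega,h+\ell-1}(k)$, which by Theorem~\ref{slln} and the induction hypothesis is a.s. $\ll A(n)\cdot A(n)^{h+\ell-1}/n = A(n)^{h+\ell}/n$. For the head, the induction hypothesis furnishes random $c(\omega), C(\omega), k_0(\omega)$ with $c(\omega) A(k)^{h+\ell-1}/k \leq r_{\omega,h+\ell-1}(k) \leq C(\omega)A(k)^{h+\ell-1}/k$ for $k \geq k_0(\omega)$, while the indices $k < k_0(\omega)$ contribute only an $\omega$-dependent $O(1)$. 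It thus reduces to concentrating the deterministic-coefficient sum
\[ T_n := \sum_{k\leq n/2} \frac{A(k)^{h+\ell-1}}{k}\,\mathbbm{1}_\omega(n-k), \]
whose summands are mutually independent (re-scaled Bernoulli trials, since the positions $n-k$ for $k \leq n/2$ are all distinct). The OR--PI lemma gives $\mathbb{E}(T_n) \asymp A(n)^{h+\ell}/n$ as in Lemma~\ref{t37}, and since $A(x) \gg x^{1/h}\log(x)^{1/h}$ forces $\mlow(A) \geq 1/h$, a direct calculation shows $\lambda_n := \max_{k\leq n/2} A(k)^{h+\ell-1}/k \ll A(n)^{h+\ell-1}/n$, whence $\mathbb{E}(T_n)/\lambda_n \asymp A(n)$. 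The Chernoff bound then yields $\Pr(|T_n/\mathbb{E}(T_n) - 1| \geq \varepsilon) \leq 2\exp(-c(\varepsilon)A(n))$, summable in $n$ since $A(n) \gg n^{1/h}$, and Borel--Cantelli closes the induction.

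The main obstacle is reconciling the a.s. induction hypothesis---whose constants and thresholds are $\omega$-dependent---with a concentration inequality requiring uniformity in $n$. The resolution is to absorb the induction hypothesis into $\omega$-dependent multiplicative and additive errors, then apply Chernoff to the resulting deterministic-coefficient sum $T_n$. Crucially, the Chernoff ratio $\mathbb{E}(T_n)/\lambda_n \asymp A(n)$ does not degrade with $\ell$, so the single threshold $M^{\ddagger}$ fixed at the base case suffices for all $\ell \geq 0$.
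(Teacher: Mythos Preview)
Your overall architecture---base case from the hypothesis plus Lemmas~\ref{pholds} and~\ref{o1rhat}, then induction via the convolution recursion and Chernoff---matches the paper's. The gap is in the Chernoff step, specifically in the claim
\[
\lambda_n := \max_{k\leq n/2}\frac{A(k)^{h+\ell-1}}{k} \;\ll\; \frac{A(n)^{h+\ell-1}}{n},
\]
which you justify by asserting that $A(x)\gg x^{1/h}\log(x)^{1/h}$ forces $\mlow(A)\geq 1/h$. This implication is false: a pointwise lower bound on $A$ does not control the lower Matuszewska index, which measures \emph{local} growth. Concretely, one can build an OR$_+$ function $f$ with $x^{1/h}\log(x)^{1/h}\ll f(x)\ll x^{1/(h-1)}$ by letting it alternate between long ``slow'' stretches of growth exponent $\alpha<1/h$ (starting near the upper envelope and drifting down to the lower one) and ``fast'' stretches that restore it to the upper envelope; this yields $\mlow(f)=\alpha<1/h$. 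For such $\A$, at an $n$ sitting at the end of a slow stretch one has $A(n)\asymp n^{1/h}\log(n)^{1/h}$, while at the start $k_0$ of that stretch $A(k_0)\asymp k_0^{1/(h-1)}$, and then $A(k_0)^{h+\ell-1}/k_0$ can be a positive power of $n$ larger than $A(n)^{h+\ell-1}/n$. Your subsequent claim $\mathbb{E}(T_n)/\lambda_n\asymp A(n)$ therefore collapses, and with it the assertion that the threshold $M^{\ddagger}$ need not be enlarged.

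The paper avoids this by bounding $\lambda_n$ \emph{without} any Matuszewska input: writing $A(k)^{h+\ell}/k = A(k)^{\ell+1}\cdot A(k)^{h-1}/k$ and using only monotonicity of $A$ together with the pointwise upper bound $A(k)\leq M_2^{-1}k^{1/(h-1)}$ gives $\lambda_n\leq 2M_2^{-(h-1)}A(n)^{\ell+1}$. The resulting Chernoff ratio is then only of order $M_2^{h-1}\cdot(A(n/2)/A(n))^{\ell}\cdot A(n/2)^{h}/n \gtrsim \log(n)$, \emph{not} $A(n)$, and making the constant in front of $\log(n)$ exceed $2$ is exactly what forces a further enlargement of $M_1,M_2$ (the requirement $M_1^{2}M_2>\text{const}$ at the end of the paper's proof). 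Moreover, because the factor $(A(n/2)/A(n))^{\ell}$ degrades in $\ell$ (its lower bound involves $(M_1M_2)^{-\ell}$ via~\eqref{relbc}), the paper cannot induct over all $\ell$ directly: it restricts the Chernoff induction to $0\leq\ell<h$ and then extends to all $\ell$ by the separate observation that if the statement holds for $t_1$ and $t_2$ it holds for $t_1+t_2$, via the convolution $r_{\omega,t_1+t_2}(n)=\sum_k r_{\omega,t_1}(k)r_{\omega,t_2}(n-k)$ and the OR--PI lemma. Your proposal is missing both the correct $\lambda_n$ bound and this two-stage structure.
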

 \begin{proof}
  Let us start with the observation that if $t_1,t_2\geq h$ are such that
  \[ r_{\omega,t_1}(n) \uptxt{a.s.}{=} \Theta\left(\frac{A(n)^{t_1}}{n}\right) \text{ and } r_{\omega,t_2}(n) \uptxt{a.s.}{=} \Theta\left(\frac{A(n)^{t_2}}{n}\right), \]
  then $r_{\omega,t_1 +t_2}(n) \uptxt{a.s.}{=} \Theta(A(n)^{t_1+t_2}/n)$. In fact, from the recursive formulas that can be derived from \eqref{forms}, it follows that
  \begin{align*} r_{\omega,t_1+t_2}(n) &= \sum_{k\leq n} r_{\omega,t_1}(k)r_{\omega,t_2}(n-k) \\
  &\uptxt{a.s.}{\asymp} \sum_{1\leq k\leq n-1} \frac{A(k)^{t_1}}{k}\frac{A(n-k)^{t_2}}{n-k} \\
  &= \sum_{1\leq k\leq n/2} \frac{A(k)^{t_1}}{k}\frac{A(n-k)^{t_2}}{n-k} + \sum_{n/2 < k\leq n-1} \frac{A(k)^{t_1}}{k}\frac{A(n-k)^{t_2}}{n-k},\end{align*}
  which may be dealt with in the same way we treated \eqref{keyobs}. Hence, for instance, if $M_1,M_2>M^{\dagger}$ then $r_{\omega,th}(n) \uptxt{a.s.}{=} \Theta(A(n)^{th}/n)$ for every $t\geq 1$, for the case $t=1$ follows from our hypothesis. With this in mind, to prove our theorem it is sufficient to show that $r_{\omega,h+\ell}(n) \uptxt{a.s.}{=} \Theta(A(n)^{h+\ell}/n)$ for $0\leq \ell < h$, thus the remaining cases will follow from this observation.
  
  We know this is true for $\ell = 0$, so we shall proceed by induction on $\ell$. Assuming it is valid for some $\ell \in \{0,1,\ldots,h-2\}$, we will deduce the case $\ell+1$. Start by noticing that
  \begin{align}
   r_{\omega,h+\ell+1}(n) &= \sum_{k\leq n} r_{\omega,h+\ell}(k)\mathbbm{1}_{\omega}(n-k) \nonumber \\
   &\uptxt{a.s.}{\asymp} \sum_{1\leq k\leq n} \frac{A(k)^{h+\ell}}{k}\mathbbm{1}_{\omega}(n-k) \nonumber \\
   &= \sum_{T<k\leq n-T} \frac{A(k)^{h+\ell}}{k}\mathbbm{1}_{\omega}(n-k) + O\left(\frac{A(n)^{h+\ell}}{n}\right), \label{finalboss}
  \end{align}
  where $T >0$ is some arbitrarily large constant to be determined. The main term from \eqref{finalboss} is a sum of independent random variables bounded by $\max_{T< k \leq n-T} \{A(k)^{h+\ell}/k\}$, therefore, letting
  \[ X_n^{(\ell)} := \sum_{T<k\leq n-T} \frac{A(k)^{h+\ell}}{k}\mathbbm{1}_{\omega}(n-k), \]
  we can apply the Chernoff bounds stated in Section \ref{sec1} to obtain:
  \begin{equation}
   \Pr\left(\left|\frac{X_n^{(\ell)}}{\mathbb{E}(X_n^{(\ell)})} -1\right| \geq \frac{1}{2}\right) \leq 2e^{-\frac{1}{16}\mathbb{E}(X_n^{(\ell)}) \left(\max\limits_{T< k \leq n-T} \frac{A(k)^{h+\ell}}{k}\right)^{-1}} \label{crazychern}
  \end{equation}
  Here enters our final step: to bound from below the absolute value of the exponent in \eqref{crazychern}. Since we know from the Lemma \ref{t37} that $\mathbb{E}(X_n^{(\ell)}) = \Theta(A(n)^{h+\ell+1}/n)$, it suffices to show this exponent is at least $2\log(n) + o(\log(n))$, then we may apply the Borel--Cantelli lemma and our theorem will follow.
  
  We start by bounding $\mathbb{E}(X_n^{(\ell)})$ from below. We have
  \begin{align*}
   \mathbb{E}(X_n^{(\ell)}) &= \sum_{T< k\leq n-T} \frac{A(k)^{h+\ell}}{k}\mathbb{E}(\mathbbm{1}_{\omega}(n-k)) \\
   &\geq \sum_{n/2< k\leq n-T} \frac{A(k)^{h+\ell}}{k}\frac{A(n-k)}{n-k+1} \\
   &\geq \frac{A(n/2)^{h+\ell}}{n} \sum_{n/2< k\leq n-T} \frac{A(n-k)}{n-k+1}
  \end{align*}
  Since $\A$ is OR$_{+}$, from Lemma \ref{pilem} we know there is some constant $d>0$ for which
  \[ \mathbb{E}(X_n^{(\ell)}) \geq d \frac{A(n)A(n/2)^{h+\ell}}{n} \quad (\text{$\forall$large $n$}). \]
  Note that $d$ does not depend on $\ell$. More importantly, it also does not depend on $T$, as can be seen from \eqref{finalboss}. Now it remains us to bound the ``$\max$'' term from above. Firstly, since $A(n)$ is non-decreasing, we have
  \[ \left(\max\limits_{T< k \leq n-T} \frac{A(k)^{h+\ell}}{k}\right) \leq A(n)^{\ell + 1}\cdot \left(\max\limits_{T< k \leq n-T} \frac{A(k)^{h-1}}{k}\right). \]
  Since $A(n) \leq M_2^{-1} n^{1/(h-1)}$ for large $n$, we may take $T$ large enough so that
  \[ \left(\max_{T< k \leq n-T} \frac{A(k)^{h-1}}{k}\right) < \frac{2}{M_2^{h-1}}. \]
  
  In view of these estimates, we then have, for all large $n$,
  \begin{equation}
   \frac{1}{16} \mathbb{E}(X_n^{(\ell)})\left(\max\limits_{T< k \leq n-T} \frac{A(k)^{h+\ell}}{k}\right)^{-1} > d\frac{M_2^{h-1}}{32} \frac{A(n/2)^\ell}{A(n)^{\ell}}\frac{A(n/2)^{h}}{n}. \label{crazychern2}
  \end{equation}
  There are two problematic terms in this lower bound: $A(n/2)^{\ell}/A(n)^{\ell}$ \& $A(n/2)^{h}/n$. For the first one we have to go back to our original OR$_{+}$ sequence $\C$. Since $\C$ is in particular OR, we know there exists some small $\vartheta > 0$ for which $C(n/2)/C(n) > \vartheta$ for all large $n$. We also know from \eqref{relbc} that for every $\eps > 0$ there is some $n_\eps \in \N$ such that, for all $n > n_{\eps} $,
  \begin{align*}
   C(n/2) &< (q_2 M_2 + \eps) A(n/2) \\
   C(n) &> \left(\frac{q_1}{M_1}-\eps\right) A(n),
  \end{align*}
  being $q_1,q_2$ the constants from the construction of $\C$. Putting these two equations together we get that
  \[ \frac{A(n/2)}{A(n)} > \frac{q_1}{q_2}\frac{\vartheta }{M_1 M_2} \]
  for all large $n$, thus, since $\ell$ ranges from $0$ to $h-2$, we have the first problematic term bounded from below by $(M_1 M_2)^{-(h-2)} (\frac{q_1}{q_2} \vartheta)^{h-2}$. The lower bound to the second term being considered, $A(n/2)^{h}/n$, comes from the fact that $A(n) \geq M_1 n^{1/h}\log(n)^{1/h}$ for large $n$. This implies
  \[ \frac{A(n/2)^{h}}{n} \geq \frac{M_1^h}{2}\log(n) + O(1). \]
  Finally, \eqref{crazychern2} then turns into
  \begin{equation}
   d\frac{M_2^{h-1}}{32} \frac{A(n/2)^\ell}{A(n)^{\ell}}\frac{A(n/2)^{h}}{n} > M_1^{2} M_2 \frac{d}{64} \left(\frac{q_1}{q_2}\vartheta \right)^{h-2}\log(n) + O(1). \label{crazychern3}
  \end{equation}
  
  In conclusion, having $M_1$,$M_2$ large enough so that
  \[ M_1^2 M_2 > \frac{2}{\frac{d}{64} \left(\frac{q_1}{q_2}\vartheta \right)^{h-2}} \]
  will imply that \eqref{crazychern2} is greater than $2\log(n) + o(\log(n))$, so we may finally apply the Borel--Cantelli lemma in \eqref{crazychern} to obtain, in view of \eqref{finalboss},
  \[ r_{\omega,h+\ell+1}(n) \uptxt{a.s.}{\asymp} \frac{A(n)^{h+\ell+1}}{n}, \]
  and this concludes our proof.
 \end{proof}
 
 Since the countable intersection of events with probability $1$ also has probability $1$, our Main Theorem follows from Lemma \ref{p1ph}, Lemma \ref{p2ph}, Theorem \ref{p314} and the fact that $\mathcal{S}_{\A}$ is non-empty.
  
 \section{Further remarks}\label{sec5}  
 \begin{rem}
  On the conditions of Theorem \ref{p314}, one can extend the result from Lemma \ref{o1rhat} by showing that
  \[ \binom{h+\ell}{2}\sum_{k\leq n/2} r_{\omega,h+\ell-2}(n-2k)\mathbbm{1}_{\omega}(k) \uptxt{a.s.}{\ll} A(n)^{\ell}\frac{\log(n)}{\log\log(n)}, \]
  where the LHS bounds the number of non-exact $(h+\ell)$-representations of $n$ from above. Since we have $r_{\omega,h+\ell}(n) \uptxt{a.s.}{\gg} A(n)^{\ell}\log(n)^{1+\ell/h}$, it then follows that $\rho_{\omega,h+\ell}(n) \uptxt{a.s.}{\asymp} r_{\omega,h+\ell}(n)$. This fact implies that the Main Theorem must also apply to exact representation functions. Originally, Erd\H{o}s and Tetali \cite{erdtet90} worked exclusively with $\rho_{\omega,h}$.
 \end{rem}
 
 \begin{rem}
  Given two sequences $\A,\mathscr{L}\subseteq \N$, one may define their \emph{composition} as $\mathscr{L}[\A] := \{\ell_{a_0},\ell_{a_1},\ell_{a_2},\ldots\}$. Since $|\mathscr{L}[\A]\cap [0,x]| = A(L(x)-1)$, we can define, in view of Step \ref{stepII} (Section \ref{sec3}), the \emph{space of $\mathscr{L}$-subsequences} $\mathscr{L}[\mathcal{S}_{\A}]$ as being $\langle \mathcal{S},(\alpha_n)_n \rangle$ with 
  \[ \alpha_n := \frac{A(L(n)-1)}{L(n)}\mathbbm{1}_{\mathscr{L}}(n). \]
  This is just a more direct way of considering $\mathscr{L}[\omega]$ with $\omega \in \mathcal{S}_\A$. Since $\A$ is OR$_{+}$, by the strong law of large numbers (Theorem \ref{slln}) we have $|\omega \cap [0,x]| \uptxt{a.s.}{\asymp} A(L(x))$; in addition, $A(L(n))^h \frac{r_{\mathscr{L},h}(n)}{s_{\mathscr{L},h}(n)}$ is then the natural candidate for $\mathbb{E}(r_{\omega,h}(n))$.
 
  It is a natural question to ask which kinds of regularity assumptions do we need to impose on $\mathscr{L}$ to be able to draw nice conclusions from $\mathscr{L}[\mathcal{S}_{\A}]$. In the case of Waring bases, for example, one can find subbases with similar properties to those described by our Main Theorem (see Vu \cite{vvu00wp}).
 \end{rem}


\appendix
\section{\texorpdfstring{$O$}{O}-regularity in sequences}\label{appA}
 In this appendix we motivate and characterize the concepts of OR, PI and OR$_+$ sequences. In spite of OR$_+$ sequences being an essential part of our proof, this section is only tangentially related to the main subject of our paper. In Propositions \ref{chrreg}, \ref{chrpi} and \ref{chrorpi} one finds equivalent definitions of OR, PI and OR$_{+}$ sequences, resp.. In Subsection \ref{subsecA5} we show an application of these concepts in the non-probabilistic context of additive bases, culminating on the sketch of an essentially elementary proof that primes that split completely in a given number field constitute a basis.
 
 \begin{ntt}
  Script letters $\A$, $\B$, $\mathscr{C}$... denote \emph{sequences}, which are infinite subset of $\N$. Roman capital letters $A$, $B$, $C$... denote the \emph{counting function} of the sequence represented by the corresponding script letter, as in $A(x) = |\A\cap [0,x]|$. Lowercase roman letters $a_0$, $a_1$, $a_2$... (resp. $b$, $c$...) denote the elements of $\A$ (resp. $\B$, $\mathscr{C}$...) in order, with $a_0$ (resp. $b_0$, $c_0$...) being its smallest element.
 \end{ntt}

\subsection{Generalities: from \texorpdfstring{$A(x)$}{A(x)} to \texorpdfstring{$s_{\A,h}(x)$}{s\_A,h(x)}}
 To motivate the study of certain regularity conditions on sequences, we need to state our goals clearly and get our priorities in order. First of all, one of the overarching goals in the study of additive bases is to \emph{characterize all bases}. This is a problem with clear number-theoretical grounding, as in this direction we find the study of the \emph{classical bases}: primes (Goldbach's conjecture), $k$th-powers (Waring's problem) and polygonal numbers (Cauchy--Fermat polygonal number theorem). A thorough overview of these topics is covered by Nathanson \cite{nathanson96}.
 
 One, however, might argue that this is more naturally understood as a \emph{combinatorial} problem, in whichever sense of the word. In this direction we find all the research springing from \emph{Sidon sequences}, such as the Erd\H{o}s--Tur\'{a}n conjecture for additive bases and the very Erd\H{o}s--Tetali theorem, as well as Erd\H{o}s--Fuchs-type estimates for representation functions. The classical reference for this more abstract treatment of sequences is Halberstam \& Roth \cite{halberstam83}, and the types of questions concerning us here are the ones outlined in the introduction of Chapter II, which we paraphrase:
 
 \begin{enumerate}[I.]
  \item What is the relationship between the asymptotic behaviour of the counting function of a sequence and its representation functions? \label{qI}
  
  \item Which properties of classical bases used to derive that these are in fact bases are actually applicable to wider classes of sequences? \label{qII}
  
  \item Which techniques employed in the study of classical bases are generalizable to a broader context? \label{qIII}
 \end{enumerate}

 In this appendix we only deal with the representation functions coming from the formal series (\ref{forms}). Given a sequence $\A\subseteq \N$, let $n\in\N$ and $h\geq 2$ an integer. The following recursive formulas can be immediately deduced:
 \begin{equation}
  \begin{split}
   r_{\A,h}(n) &= \sum_{k\leq n} r_{\A,h-\ell}(k)r_{\A,\ell}(n-k) \\
   s_{\A,h}(n) &= \sum_{k\leq n} r_{\A,h-\ell}(k)s_{\A,\ell}(n-k) = \sum_{k\leq n} s_{\A,h-\ell}(k)r_{\A,\ell}(n-k)
  \end{split} \label{p1}
 \end{equation}
 For purposes of induction the case $\ell=1$ is usually enough, and we denote $r_{\A,1}(n)$ simply by $\mathbbm{1}_{\A}(n)$. Subsection \ref{subsecA5} tries to provide an answer for questions \ref{qII} and \ref{qIII} via a framework which mixes both Schnirelmann's classical theory of sequences and Hardy--Littlewood's circle method, while the remainder is dedicated to exploring question \ref{qI} and, from it, possible ``classes of sequences'' as mentioned in question \ref{qII}.
 
 We start with the following lemma.
 
 \begin{lem}\label{ax2sax}
  For every sequence $\A\subseteq\N$,
  \begin{equation*}
   A(x/2)^h \leq s_{\A,h}(x) \leq A(x)^h, \quad \forall h\geq 1.
  \end{equation*}
 \end{lem}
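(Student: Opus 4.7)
The plan is to prove both inequalities by direct counting of ordered $h$-tuples in $\A^h$.

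For the upper bound $s_{\A,h}(x)\leq A(x)^h$, I would observe that any tuple $(k_1,\ldots,k_h)$ contributing to $s_{\A,h}(x)$ must satisfy $k_i\leq x$ for every $i$, since the coordinates are non-negative and their sum is at most $x$. Hence each coordinate admits at most $|\A\cap[0,x]|=A(x)$ choices, and the bound is immediate.

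For the lower bound, the natural move is to exhibit a sub-family of tuples that are automatically counted by $s_{\A,h}(x)$. The cleanest such family consists of tuples whose coordinates all lie in $\A\cap[0,\lfloor x/h\rfloor]$: any such tuple has sum at most $x$. This yields $A(\lfloor x/h\rfloor)^h$ admissible tuples. For $h=2$ this already matches the stated bound $A(x/2)^2$; for $h\geq 3$ one can additionally refine the argument by the convolution identity $s_{\A,h}(x)=\sum_{k\leq x}\mathbbm{1}_{\A}(k)\,s_{\A,h-1}(x-k)$, isolating the summands with $k\leq x/2$ and feeding in the inductive hypothesis on $s_{\A,h-1}$. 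In any case, for the downstream use of the lemma, only the scale of $x$ matters: the sequences it will be applied to are OR, so $A(cx)\asymp A(x)$ for every fixed $c>0$, and any lower bound of the shape $A(cx)^h$ is equally good.

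I do not expect a serious obstacle. The lemma is purely combinatorial, requiring neither probabilistic input nor regular-variation machinery; it serves as the bridge linking $A(x)$ to $s_{\A,h}(x)$. Once both sides of $s_{\A,h}(x)$ are sandwiched by $A$-type quantities, the $O$-regularity assumptions imported later (via Proposition \ref{chrreg}) will flatten the dyadic factor and yield the asymptotic equivalence $s_{\A,h}(x)\asymp A(x)^h$ that the appendix is aiming at.
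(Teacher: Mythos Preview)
Your upper-bound argument by direct counting is correct and amounts to the same thing as the paper's inductive version. For the lower bound you are right to hesitate: your direct count gives $A(x/h)^h$, and the inductive refinement you sketch---restrict to $k\le x/2$ in the convolution and use monotonicity of $s_{\A,h-1}$---is exactly the paper's argument. But that induction does \emph{not} close to $A(x/2)^h$: from $s_{\A,h}(n)\ge s_{\A,h-1}(n/2)\,A(n/2)$ the inductive hypothesis only supplies $s_{\A,h-1}(n/2)\ge A(n/4)^{h-1}$, so for $h=3$ one obtains $A(n/2)\,A(n/4)^2$, not $A(n/2)^3$. In fact the literal inequality $A(x/2)^h\le s_{\A,h}(x)$ fails for $h\ge 3$ and small $x$: with $\A=\N$, $h=10$, $x=2$ one has $A(1)^{10}=1024$ while $s_{\N,10}(2)=\binom{12}{10}=66$.

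So your caution is warranted and your final remark is the right resolution: the honest statement is $A(x/h)^h\le s_{\A,h}(x)\le A(x)^h$, and this is all that is ever used downstream, since every application in the appendix feeds these inequalities into an OR hypothesis (Proposition~\ref{chrreg}) or into the Matuszewska-index computations (Lemma~\ref{abab2}, Proposition~\ref{chrpi}), where any fixed scaling of $x$ is harmless. The paper's own proof carries the same slip in its last displayed line; you should simply state and prove the version with $x/h$.
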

 \begin{proof}
  The case $h=1$ is clear, hence we proceed by induction. By the recursive formulas in \eqref{p1},
  \begin{align*}
   s_{\A,h}(n) &= \sum_{k\leq n} s_{\A,h-1}(k)\mathbbm{1}_{\A}(n-k) \\
   &\leq s_{\A,h-1}(n)\sum_{k\leq n} \mathbbm{1}_{\A}(n-k) \\
   &= s_{\A,h-1}(n)\cdot A(n) \leq A(n)^h,
  \end{align*}
  and
  \begin{align*}
   s_{\A,h}(n) &\geq \sum_{n/2\leq k\leq n} s_{\A,h-1}(k)\mathbbm{1}_{\A}(n-k) \\
   &\geq s_{\A,h-1}(n/2)\sum_{n/2\leq k\leq n} \mathbbm{1}_{\A}(n-k)\\
   &= s_{\A,h-1}(n/2)\cdot A(n/2) \geq A(n/2)^h,
  \end{align*}
  as required.
 \end{proof}
 
 The reason why we initially focus on $s_{\A,h}(n)$ instead of $r_{\A,h}(n)$ is that, apart from being easier to deal with, it relates to $r_{\A,h}(n)$ as being a sort of an ``average'' when one tries to employ the rough approximation 
 \[ r_{\A,h}(n) \approx \frac{s_{\A,h}(n)}{n+1}. \]
 Much of what we propose comes from this intuition. To put more precisely, we seek for the conditions necessary for this intuition to make sense.

\subsection{OR sequences}\label{appA2}
 Let us introduce some bits of regular variation theory. An extensive treatment on this topic can be found in Bingham, Goldie \& Teugels \cite{bingham89}. We will only use the theory from Chapters 1 and 2.

 Take $f:[x_0,+\infty)\to \R_{+}$ a positive real function. We say that $f$ is
 \begin{itemize}
  \item \emph{Slowly varying} if $f(\lambda x) \sim f(x)$ for all $\lambda >0$;
  \item \emph{Regularly varying} if $f(\lambda x) \sim \lambda^\rho f(x)$, for all $\lambda>0$ and some $\rho\in\R$;
  \item \emph{$O$-regularly varying} if $f(\lambda x)\asymp f(x)$ for all $\lambda >0$.
 \end{itemize}

 The generality of these definitions lies on Karamata's characterization theorem,\footnote{Theorem 1.4.1, p. 17 of Bingham et al. \cite{bingham89}.} which states that if $f(\lambda x) \sim g(\lambda)f(x)$ with $g(\lambda)\in(0,+\infty)$ for all $\lambda >0$ and $f$ is measurable, then $f$ is regularly varying, i.e. there is $\rho\in\R$ such that $g(\lambda)=\lambda^\rho$. One can then promptly see how $O$-regular variation extends regular variation, and that is why we shall focus only on the former. We have mentioned in the introduction that a sequence $\A\subseteq \N$ is OR whenever $A(2x) = O(A(x))$. This is equivalent to $A$ being an $O$-regularly varying function. To be more precise, we define an \emph{OR sequence} to be a sequence satisfying any of the equivalent conditions of the following proposition.
   
 \begin{prop}[OR sequences] \label{chrreg} Let $\A\subseteq\N$ be a sequence. The following are equivalent:
 \begin{enumerate}[(i)]
  \item $A$ is $O$-regularly varying; \label{a2i}
  \item $A(2x) = O\left(A(x)\right)$; \label{a2ii}
  \item $s_{\A,h}(x) = \Theta(A(x)^h)$ for all $h\geq 1$; \label{a2iii}
  \item $s_{\A,h}(2x) = O(s_{\A,h}(x))$ for all or at least some $h\geq 1$.\footnote{By this we mean that if this holds for at least some $h\geq 1$, then it also holds for all $h\geq 1$. The same appears in items $\text{(iv)}$ of Propositions \ref{chrpi} and \ref{chrorpi}.} \label{a2iv}
 \end{enumerate}
 \end{prop}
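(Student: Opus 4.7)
My plan is to establish the four equivalences as a cycle, specifically (ii) $\Rightarrow$ (iii) $\Rightarrow$ (ii) and (ii) $\Leftrightarrow$ (iv), together with the independent equivalence (i) $\Leftrightarrow$ (ii). The only tools are the monotonicity of $A$ and the bracketing Lemma \ref{ax2sax}.

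For (i) $\Leftrightarrow$ (ii), since $A$ is non-decreasing I would iterate (ii): for any $\lambda \geq 1$, write $A(\lambda x) \leq A(2^{\lceil \log_2 \lambda\rceil} x) \leq C^{\lceil \log_2 \lambda\rceil} A(x)$ while $A(\lambda x) \geq A(x)$ for free; for $\lambda < 1$, applying the same argument with $1/\lambda$ gives $A(x) \ll A(\lambda x)$. This yields $A(\lambda x) \asymp A(x)$ for every $\lambda > 0$.

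For (ii) $\Rightarrow$ (iii), feeding (ii) (applied at $x/2$) into Lemma \ref{ax2sax} gives $A(x)^h \leq C^h A(x/2)^h \leq C^h s_{\A,h}(x) \leq C^h A(x)^h$, so $s_{\A,h}(x) = \Theta(A(x)^h)$. The same sandwich produces (ii) $\Rightarrow$ (iv): $s_{\A,h}(2x) \leq A(2x)^h \ll A(x/2)^h \leq s_{\A,h}(x)$. Conversely, if $s_{\A,h}(2x) \leq C s_{\A,h}(x)$ holds for a single $h$, iterating once gives $s_{\A,h}(4x) \leq C^2 s_{\A,h}(x)$, and Lemma \ref{ax2sax} yields $A(2x)^h \leq s_{\A,h}(4x) \leq C^2 s_{\A,h}(x) \leq C^2 A(x)^h$, whence (ii). This handles the ``for all or at least some $h$'' phrasing simultaneously.

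The only genuinely nontrivial step is (iii) $\Rightarrow$ (ii), and I would take $h=2$. Writing $s_{\A,2}(x) = \sum_{a \in \A \cap [0,x]} A(x-a)$ and splitting the sum at $a = x/2$, the low part contributes at most $A(x/2)\cdot A(x)$ and the high part at most $(A(x)-A(x/2))\cdot A(x/2)$, giving $s_{\A,2}(x) \leq A(x/2)(2A(x)-A(x/2))$. Combined with the hypothesis $s_{\A,2}(x) \geq cA(x)^2$, the substitution $t := A(x/2)/A(x) \in [0,1]$ yields $c \leq 2t - t^2$, hence $t \geq 1-\sqrt{1-c} > 0$. Rescaling $x \mapsto 2x$ turns this into $A(2x) \leq (1-\sqrt{1-c})^{-1} A(x)$, which is (ii). The bookkeeping of the split---extracting a pointwise doubling bound on $A$ from the integrated information in (iii)---is the main delicate point, but once the inequality is set up the rest is algebra.
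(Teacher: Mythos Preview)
Your proof is correct. The logical skeleton differs from the paper's in two places, and in both your route is arguably cleaner. For (iv)$\Rightarrow$(ii) the paper unfolds the recursion $s_{\A,h}(2n)=\sum_{k\leq 2n} r_{\A,h-1}(k)\,s_{\A,1}(2n-k)$ and peels off a factor of $A(n)$, whereas you simply iterate (iv) once and sandwich with Lemma~\ref{ax2sax} to read off $A(2x)^h\leq s_{\A,h}(4x)\ll s_{\A,h}(x)\leq A(x)^h$; this avoids the recursion entirely. For (iii)$\Rightarrow$(ii) the paper passes through (iv) (declaring (iii)$\Rightarrow$(iv) ``clear'', which on inspection seems to presuppose (ii)), while your $h=2$ splitting argument extracts the doubling bound directly: the inequality $s_{\A,2}(x)\leq A(x/2)\bigl(2A(x)-A(x/2)\bigr)$ combined with $s_{\A,2}(x)\geq cA(x)^2$ forces $A(x/2)/A(x)\geq 1-\sqrt{1-c}$, which is exactly (ii). This is a genuinely different and self-contained step, and it has the virtue of making transparent \emph{why} the $\Theta$-information in (iii) pins down the doubling behaviour of $A$. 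The remaining implications, (i)$\Leftrightarrow$(ii) by dyadic iteration and (ii)$\Rightarrow$(iii),(iv) straight from Lemma~\ref{ax2sax}, coincide with the paper's.
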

 \begin{proof}
  \eqref{a2iii}$\implies$\eqref{a2iv} is clear, and \eqref{a2i}$\implies$\eqref{a2iii} follows from Lemma \ref{ax2sax}.

  We show now that \eqref{a2iv}$\implies$\eqref{a2ii}. The case $h=1$ is trivial, so suppose $h\geq 2$. From Lemma \ref{ax2sax} it follows $s_{\A,h}(x) \leq A(x)^h \leq s_{\A,h}(2x)$, therefore \eqref{a2iv} implies $s_{\A,h}(x)\asymp A(x)^h$. By the recursive formulas \eqref{p1},
  \begin{align*}
   A(2n)^h &\ll s_{\A,h}(2n) \\
   &= \sum_{k\leq 2n} r_{\A,h-1}(k)s_{\A,1}(2n-k) \\
   &\leq \sum_{n<k\leq 2n} r_{\A,h-1}(k)s_{\A,1}(2n-k)\\
   &\ll A(n)\sum_{n<k\leq 2n} r_{\A,h-1}(k) \\
   &\ll A(n)\cdot s_{\A,h-1}(2n) \ll A(n)\cdot A(2n)^{h-1},
  \end{align*}
  thus $A(2x)\ll A(x)$.
  
  Finally, to show that \eqref{a2ii}$\implies$\eqref{a2i}, note that $A(x)$ is non-decreasing, thus \eqref{a2ii} is equivalent to having $A(x)\asymp A(2x)$. Hence, for all $k\in\Z$,
  \[ A(x) \asymp A(2^k x), \]
  which implies $A(x) \asymp A(\lambda x)$ for all $\lambda>0$.
 \end{proof}

 Keeping Lemma \ref{ax2sax} in mind, one can then say that OR sequences are the most well-behaved sequences in terms of growth order, for it is sufficient to have $A(x)$ in order to deduce the growth of $s_{\A,h}$.
  
 \subsection{PI sequences}\label{appA3} 
 In spite of that, just assuming $\A$ to be OR does not guarantee that $a_{n}$, when viewed as a function of $n$, is $O$-regularly varying. For this part we need another bit of regular variation theory. Still following Bingham et al. \cite{bingham89}, a positive real function $f:[x_0,+\infty)\to\R_+$ is said to be \emph{almost increasing} when there is $m>0$ such that
 \[ f(y) \geq mf(x), \quad \forall y\geq x\geq x_0. \]
 This is equivalent to saying that $f(x) \ll \inf_{y\geq x} f(y)$; \emph{almost decreasing} functions can be defined in an analogous way, namely $f(x) \gg \sup_{y\geq x} f(y)$. Consider then the \emph{upper} ($\mupp$) and \emph{lower Matuszewska index} ($\mlow$) of $f$:
 \begin{equation}
  \begin{split}
   \mupp(f) &:= \inf\,\{\gamma \in \R: x^{-\gamma}f(x) \text{ is almost decreasing}\}, \\
   \mlow(f) &:= \sup\{\gamma \in \R: x^{-\gamma}f(x) \text{ is almost increasing}\}.
  \end{split} \label{dmatusz}
 \end{equation}
 This definition is given in view of the almost-monotonicity theorem.\footnote{Theorem 2.2.2, p. 72 of Bingham et al. \cite{bingham89}.} These indices are preserved under the asymptotic sign ``$\asymp$'', and both are finite if and only if $f$ is $O$-regularly varying.\footnote{Theorem 2.1.7, p. 71 of Bingham et al. \cite{bingham89}.} Functions with $\mlow(f)>0$ are said to have \emph{positive increase}. Our motivation to study these indices comes from the next two lemmas. By ``$a_{\lfloor \cdot\rfloor}$'' we shall be denoting the positive real function that takes $x \mapsto a_{\lfloor x\rfloor}$.
  
 \begin{lem}\label{abab}
  For every sequence $\A\subseteq\N$,
  \[ \mlow(A) = \frac{1}{\mupp(a_{\lfloor \cdot\rfloor})}, \qquad \mupp(A) = \frac{1}{\mlow(a_{\lfloor \cdot \rfloor})}; \]
  adopting the conventions ``$1/0 = +\infty$'' and ``$1/(+\infty) = 0$''.
 \end{lem}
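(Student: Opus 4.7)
The plan is to exploit the fact that $A$ and the function $n\mapsto a_n$ are generalized inverses of each other: by construction one has $A(a_n)=n+1$ and $a_{A(x)-1}\leq x < a_{A(x)}$ for $x\geq a_0$. So the identities should follow from a clean ``reciprocal'' correspondence between the classes of exponents appearing in the definitions \eqref{dmatusz}. More concretely, I will prove the two implications
\begin{align*}
 \gamma>0 \text{ and } x^{-\gamma}A(x)\text{ almost increasing} &\implies n^{-1/\gamma}a_n \text{ almost decreasing}, \\
 \delta>0 \text{ and } n^{-\delta}a_n \text{ almost decreasing} &\implies x^{-1/\delta}A(x)\text{ almost increasing},
\end{align*}
and the two analogous statements obtained by swapping ``almost increasing'' and ``almost decreasing''. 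Granting these, the first identity in the lemma follows from
\[ \mlow(A) = \sup\{\gamma>0 : x^{-\gamma}A(x)\text{ a.i.}\} = \sup\{1/\delta : \delta>0,\, n^{-\delta}a_n \text{ a.d.}\} = \frac{1}{\mupp(a_{\lfloor\cdot\rfloor})}, \]
using that $\inf$ and $\sup$ interchange with $t\mapsto 1/t$ on $(0,+\infty)$; the second identity follows identically.

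For the first implication: if $A(y)\geq m(y/x)^\gamma A(x)$ for all $y\geq x\geq x_0$, then plugging $x=a_p,y=a_q$ with $q\geq p$ large enough gives $q+1\geq m(a_q/a_p)^\gamma (p+1)$, which rearranges to $a_q\leq m^{-1/\gamma}((q+1)/(p+1))^{1/\gamma}a_p$, i.e.\ $n^{-1/\gamma}a_n$ is almost decreasing (using $(n+1)^{1/\gamma}\asymp n^{1/\gamma}$). For the converse: given $a_q\leq M(q/p)^\delta a_p$ for $q\geq p$, set $p=A(x)$ and $q=A(y)-1$ with $y\geq x$ large enough to ensure $q\geq p$; then $a_{p}>x$ and $a_q\leq y$ give
\[ y \geq a_q \geq \tfrac{1}{M}(q/p)^{1/\delta\cdot\delta}\text{-type bound}\Longrightarrow A(y)-1 \leq M^{1/\delta}(y/x)^{1/\delta}A(x), \]
so $x^{-1/\delta}A(x)$ is almost increasing (absorbing the ``$-1$'' and ``$+1$'' terms into constants, which is harmless since $A(x)\to\infty$). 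The two implications for the almost-decreasing case are formally symmetric, with the indices $p,q$ shifted by one to match which of $a_{p-1}\leq x<a_p$ or $a_p\leq x<a_{p+1}$ is needed to flip the inequality the right way.

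Finally, the conventions ``$1/0=+\infty$'' and ``$1/(+\infty)=0$'' handle the degenerate cases painlessly: if the set $\{\delta>0:x^{-\delta}a_{\lfloor x\rfloor}\text{ a.d.}\}$ is empty (so $\mupp(a_{\lfloor\cdot\rfloor})=+\infty$), then by the reciprocal correspondence $\{\gamma>0:x^{-\gamma}A(x)\text{ a.i.}\}$ is also empty and $\mlow(A)=0$; if the first set equals all of $(0,+\infty)$ (so $\mupp(a_{\lfloor\cdot\rfloor})=0$), then $\mlow(A)=+\infty$. The main (albeit minor) obstacle is clerical: tracking the off-by-one shifts caused by $A(a_n)=n+1$ rather than $n$ and by the strict-vs-weak inequalities defining $A$, and making sure that the $a.i./a.d.$ translation holds with a single uniform constant on $[x_0,+\infty)$ rather than only eventually. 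This is handled by restricting to large $n$ (using that we only care about asymptotic behavior) and adjusting the constants $m,M$ by a bounded factor.
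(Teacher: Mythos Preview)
Your approach is essentially identical to the paper's: both translate almost-monotonicity of $x^{-\gamma}A(x)$ into almost-monotonicity of $n^{-1/\gamma}a_n$ via the inverse relation $A(a_n)=n+1$, with your direct ``second implication'' being precisely the contrapositive of the paper's second part.

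One clerical slip worth flagging in your converse direction: with your choice $p=A(x)$, $q=A(y)-1$ you get $a_p>x$ and $a_q\leq y$, which is the wrong orientation to extract a \emph{lower} bound on $A(y)/A(x)$ from the almost-decreasing hypothesis on $a_n$ (and indeed your displayed conclusion ``$A(y)-1 \leq M^{1/\delta}(y/x)^{1/\delta}A(x)$'' is an almost-decreasing statement, not almost-increasing). The fix is to swap the shifts: take $p=A(x)-1$ (so $a_p\leq x$) and $q=A(y)$ (so $a_q>y$); then $y<a_q\leq M(q/p)^{\delta}a_p\leq M\bigl(A(y)/(A(x)-1)\bigr)^{\delta}x$ rearranges to $A(y)\gg (y/x)^{1/\delta}A(x)$ as needed. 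This is exactly the off-by-one bookkeeping you anticipated, so the overall argument is sound.
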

 \begin{proof}
  We will only show that $\mlow(A) = 1/\mupp(a_{\lfloor \cdot\rfloor})$. The argument for the other equation is entirely analogous, but with opposite inequalities. First, suppose $\mlow(A)>0$. Taking $0< \gamma < \mlow(A)$, there exists $m>0$ and some $x_0 \in\R_{+}$ for which
  \[ \frac{A(y)}{y^{\gamma}} \geq m\frac{A(x)}{x^{\gamma}}, \quad \forall y\geq x\geq x_0. \]
  Hence, there must be some $n_0 \in \N$ such that
  \[ \frac{A(a_N)}{a_{N}^{\gamma}} \geq m\frac{A(a_n)}{a_n^{\gamma}}, \quad \forall N\geq n\geq n_0; \]
  but since $A(a_k) = k+1$,
  \[ \frac{a_N}{(N+1)^{1/\gamma}} \leq \frac{1}{m^{1/\gamma}} \frac{a_n}{(n+1)^{1/\gamma}}, \quad \forall N\geq n\geq n_0; \]
  which means $a_n/n^{1/\gamma}$ is almost decreasing, thus $\mupp(a_{\lfloor \cdot \rfloor}) \geq 1/\mlow(A)$.
   
  To see that strict inequality cannot hold, we now take $\gamma > \mlow(A)$. Note that now we are considering the possibility of having $\mlow(A) = 0$, for the next argument will also apply to the ``$0,+\infty$'' case. For this chosen $\gamma$, the function $A(x)/x^{\gamma}$ will not be almost increasing; that is, $\forall \eps>0,\forall M\in \R_{+}$ there are $y_{\eps} > x_{\eps} \geq M$ such that
  \[ \frac{A(y_{\eps})}{y_{\eps}^{\gamma}} \leq \eps \frac{A(x_\eps)}{x_{\eps}^{\gamma}}. \]
  Letting $n_\eps:= A(x_\eps)$ and $N_\eps := A(y_\eps)+1$, the above inequality implies
  \[ \frac{a_{N_\eps}}{(N_\eps-1)^{1/\gamma}} \geq \frac{1}{\eps^{1/\gamma}} \frac{a_{n_\eps}}{n_{\eps}^{1/\gamma}}, \]
  which means $a_n/n^{1/\gamma}$ cannot be almost decreasing, thus $\mupp(a_{\lfloor \cdot\rfloor}) < 1/\gamma$. Hence equality must hold when $\mlow(A) > 0$, and $\mlow(A)$ vanishes iff $\mupp(a_{\lfloor \cdot\rfloor}) = +\infty$.
 \end{proof}
 
 \begin{lem}\label{abab2}
  For every sequence $\A\subseteq\N$ and every $h\geq 1$,
  \begin{equation*}
    \mlow(s_{\A,h}) = h\cdot \mlow(A), \qquad \mupp(s_{\A,h}) = h\cdot \mupp(A).
  \end{equation*}
 \end{lem}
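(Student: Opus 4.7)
The plan is to sandwich the ratio $s_{\A,h}(\lambda x)/s_{\A,h}(x)$ between two ratios involving $A$ using Lemma~\ref{ax2sax} and then to read off the Matuszewska indices via the Karamata-style characterization (Theorem~2.1.7 of Bingham et al.~\cite{bingham89}): for any positive measurable $f$,
\[
\mupp(f)=\lim_{\lambda\to\infty}\frac{\log f^{*}(\lambda)}{\log\lambda},\qquad
\mlow(f)=\lim_{\lambda\to\infty}\frac{\log f_{*}(\lambda)}{\log\lambda},
\]
where $f^{*}(\lambda):=\limsup_{x\to\infty}f(\lambda x)/f(x)$ and $f_{*}(\lambda):=\liminf_{x\to\infty}f(\lambda x)/f(x)$.

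First, I would chain the two bounds of Lemma~\ref{ax2sax} to obtain, for every $\lambda>0$ and all sufficiently large $x$,
\[
\left(\frac{A(\lambda x/2)}{A(x)}\right)^{\!h}
\leq\frac{s_{\A,h}(\lambda x)}{s_{\A,h}(x)}
\leq\left(\frac{A(\lambda x)}{A(x/2)}\right)^{\!h}.
\]
Then I would take $\limsup_{x\to\infty}$ and $\liminf_{x\to\infty}$ across this sandwich. Reading the left-hand factor as $A((\lambda/2)x)/A(x)$ directly, making the change of variable $y=x/2$ in the right-hand factor so that $A(\lambda x)/A(x/2)=A(2\lambda y)/A(y)$, and using that $t\mapsto t^{h}$ commutes with $\limsup$ and $\liminf$ on non-negative arguments (an elementary exercise), this yields
\[
[A_{*}(\lambda/2)]^{h}\leq (s_{\A,h})_{*}(\lambda)\leq [A_{*}(2\lambda)]^{h},\qquad
[A^{*}(\lambda/2)]^{h}\leq (s_{\A,h})^{*}(\lambda)\leq [A^{*}(2\lambda)]^{h}.
\]

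Finally, I would take logarithms, divide by $\log\lambda$, and let $\lambda\to\infty$. Since $\log(c\lambda)/\log\lambda\to 1$ for any constant $c>0$, both outer bounds collapse to $h\cdot\mupp(A)$ (respectively $h\cdot\mlow(A)$), forcing $\mupp(s_{\A,h})=h\,\mupp(A)$ and $\mlow(s_{\A,h})=h\,\mlow(A)$. The main (and essentially only) obstacle is ensuring that the harmless factor-of-$2$ shifts appearing inside $A^{*}$ and $A_{*}$ vanish in the limit, which is exactly what the asymptotic $\log(c\lambda)/\log\lambda\to 1$ delivers; note in particular that no regularity assumption (such as $\A$ being OR) is needed, since the identity is an equality of Matuszewska indices in the extended real line and a direct approach via ``almost increasing/decreasing'' does require OR-type control on $A$.
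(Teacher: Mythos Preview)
Your proof is correct and takes a genuinely different route from the paper's. The paper argues directly from the almost-increasing/decreasing definition \eqref{dmatusz}: for each inequality it fixes a $\gamma$ on the wrong side of the claimed index, produces sequences $(x_k),(y_k)$ witnessing the failure of almost-monotonicity, and feeds them through Lemma~\ref{ax2sax} to reach a contradiction; this requires a separate treatment of the case $\mlow(A)=0$. You instead invoke the limit characterization $\mupp(f)=\lim_{\lambda\to\infty}\log f^{*}(\lambda)/\log\lambda$ (and its $\mlow$ analogue) and run a clean sandwich: Lemma~\ref{ax2sax} traps $(s_{\A,h})^{*}(\lambda)$ between $[A^{*}(\lambda/2)]^{h}$ and $[A^{*}(2\lambda)]^{h}$, and the constant factors of $2$ inside $A^{*}$ wash out after dividing $\log$ by $\log\lambda$. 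Your approach is shorter and handles all cases uniformly in the extended real line (the subadditivity of $\log f^{*}$ guarantees the limit exists; monotonicity of $A$ and $s_{\A,h}$ keeps $f^{*},f_{*}\geq 1$ for $\lambda\geq 1$, so no issues with $\log 0$). The paper's approach, on the other hand, stays entirely within the definitions already introduced in the text and does not need to import the $f^{*},f_{*}$ machinery from \cite{bingham89}.
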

 \begin{proof}
  As in Lemma \ref{abab}, we will only show the $\mlow$ case, for the other is analogous but with opposite inequalities and ``$+\infty$'' instead of ``$0$''. First, suppose $\mlow(A)>0$. Assume for the sake of contradiction that $\mlow(s_{\A,h}) < h\cdot \mlow(A)$. This means that there is $\mlow(s_{\A,h}) < \gamma < h\cdot\mlow(A)$ for which there is $(x_k)_{k\in\N}$, $(y_k)_{k\in\N}$ with $y_k > x_k \to +\infty$ as $k\to +\infty$ such that
  \[ \frac{s_{\A,h}(y_k)}{s_{\A,h}(x_k)} < \frac{1}{k}\left(\frac{y_k}{x_k}\right)^\gamma. \]
  From Lemma \ref{ax2sax} we have
  \[ \frac{A(y_k/2)^h}{A(x_k)^h} \leq \frac{s_{\A,h}(y_k)}{s_{\A,h}(x_k)}, \]
  but, since $\gamma/h < \mlow(A)$, we conclude that $y_k/2 < x_k$ for all large $k$. Hence
  \[ \frac{s_{\A,h}(y_k)}{s_{\A,h}(x_k)} < \frac{2^\gamma}{k} \xrightarrow{k\to +\infty} 0, \]
  a contradiction. Therefore $\mlow(A) >0$ implies $\mlow(s_{\A,h}) \geq h\cdot \mlow(A)$.
  
  To see that strict inequality cannot hold, notice that $A(x)$ is always an increasing function and $A(x) \leq x+1$, hence $0\leq \mlow(A)\leq 1$. Take $\gamma > h\cdot \mlow(A)$, considering the possibility of having $\mlow(A)=0$. Similarly to the previous case, there is $(x_k)_{k\in\N}$, $(y_k)_{k\in\N}$ with $y_k > x_k \to +\infty$ as $k\to +\infty$ for which
  \[ \frac{A(y_k)}{A(x_k)} < \frac{1}{k^{1/h}}\left(\frac{y_k}{x_k}\right)^{\gamma/h}. \]
  From Lemma \ref{ax2sax} we have
  \[ \frac{s_{\A,h}(y_k)}{s_{\A,h}(x_k/2)} \leq \frac{A(y_k)^h}{A(x_k)^h}, \]
  therefore we may deduce that $\mlow(s_{\A,h}) \leq \gamma$, as required.
 \end{proof}

 Thus, we define a \emph{PI sequence} to be a sequence satisfying any of the following equivalent conditions.

 \begin{prop}[PI sequences] \label{chrpi}
  Let $\A\subseteq\N$ be a sequence. The following are equivalent:
  \begin{enumerate}[(i)]
   \item $A$ has positive increase;$\phantom{\liminf\limits_{x\to +\infty}}$ \label{a4i}
   \item $a_{2n} = O\left(a_n\right)$;$\phantom{\liminf\limits_{x\to +\infty}}$ \label{a4ii}
   \item $\liminf\limits_{x\to +\infty} A(\lambda x)/A(x) > 1$ for some $\lambda >1$; \label{a4iii}
   \item $s_{\A,h}$ has positive increase for all or at least some $h\geq 1$.$\phantom{\liminf\limits_{x\to +\infty}}$ \label{a4iv}
  \end{enumerate}
 \end{prop}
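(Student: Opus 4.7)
The plan is to establish the three equivalences (i)$\iff$(ii), (i)$\iff$(iii), and (i)$\iff$(iv), with the two flanking ones being quick consequences of Lemmas \ref{abab} and \ref{abab2}, while (i)$\iff$(iii) requires a short iteration argument that is the main (but still routine) bit of work.

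For (i)$\iff$(iv), by Lemma \ref{abab2} we have $\mlow(s_{\A,h}) = h\cdot \mlow(A)$ for every $h \geq 1$, so $\mlow(A) > 0$ is equivalent to $\mlow(s_{\A,h}) > 0$ for any single $h$ and simultaneously for all $h$; this handles the ``all or at least some'' phrasing in one stroke. For (i)$\iff$(ii), Lemma \ref{abab} gives $\mlow(A) = 1/\mupp(a_{\lfloor\cdot\rfloor})$, so (i) is equivalent to $\mupp(a_{\lfloor\cdot\rfloor}) < +\infty$. Since $a_{\lfloor\cdot\rfloor}$ is non-decreasing, finite upper Matuszewska index is equivalent to a doubling bound $a_{\lfloor 2x\rfloor} = O(a_{\lfloor x\rfloor})$, i.e. $a_{2n} = O(a_n)$: one direction just unfolds the definition of $\mupp$ at $\lambda = 2$, while the other iterates $a_{2n} \leq C a_n$ to $a_{2^k n} \leq C^k a_n$ and concludes that $a_{\lfloor\cdot\rfloor}(x)/x^{\gamma}$ is almost decreasing whenever $2^{\gamma} > C$.

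For (i)$\iff$(iii), the forward direction is direct: if $0 < \gamma < \mlow(A)$, then $A(x)/x^{\gamma}$ is almost increasing with some constant $m > 0$, hence $A(\lambda x)/A(x) \geq m\lambda^{\gamma}$ for every $\lambda \geq 1$ and all sufficiently large $x$; choosing $\lambda > m^{-1/\gamma}$ yields $\liminf_{x\to +\infty} A(\lambda x)/A(x) > 1$. Conversely, fix $\lambda > 1$ and $c > 1$ with $A(\lambda x) \geq c A(x)$ for all $x \geq x_0$, and set $\gamma := \log c / \log \lambda$. Iterating gives $A(\lambda^k x) \geq c^k A(x)$, which rearranges to $A(\lambda^k x)/(\lambda^k x)^{\gamma} \geq A(x)/x^{\gamma}$. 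To promote this to an almost-increasing statement for arbitrary $y \geq x \geq x_0$, take the unique $k \geq 0$ with $\lambda^k x \leq y < \lambda^{k+1} x$ and use the monotonicity of $A$: $A(y)/y^{\gamma} \geq A(\lambda^k x)/(\lambda^{k+1}x)^{\gamma} \geq \lambda^{-\gamma} A(x)/x^{\gamma}$, so $\mlow(A) \geq \gamma > 0$. This final interpolation is the most delicate step, but it comes for free from the monotonicity of $A$.
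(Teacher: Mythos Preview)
Your proof is correct. The equivalences (i)$\iff$(ii) and (i)$\iff$(iv) are handled exactly as in the paper, via Lemmas \ref{abab} and \ref{abab2}; your additional unpacking of why $\mupp(a_{\lfloor\cdot\rfloor})<+\infty$ is equivalent to the doubling bound $a_{2n}=O(a_n)$ is accurate and makes explicit what the paper leaves to the reader.

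The genuine difference lies in (i)$\iff$(iii). The paper does not prove this equivalence directly: it shows (iv)$\Rightarrow$(iii) using Lemma \ref{ax2sax} to pass from $s_{\A,h}$ back to $A$, and then (iii)$\Rightarrow$(i) by the contrapositive, arguing through the elements $a_n$ (if $\mlow(A)=0$ then $a_{\lfloor(1+\eps)n_k\rfloor}/a_{n_k}$ is unbounded along some $n_k$, which forces $A(\lambda x)/A(x)\to 1$ along $a_{n_k}$). Your route is more direct on both sides: (i)$\Rightarrow$(iii) reads off the almost-increasing inequality and chooses $\lambda$ large, while (iii)$\Rightarrow$(i) iterates $A(\lambda x)\geq cA(x)$ along powers of $\lambda$ and interpolates using monotonicity of $A$ to obtain $\mlow(A)\geq \log c/\log\lambda$. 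This avoids both Lemma \ref{ax2sax} and any reference to the $a_n$, and is arguably the cleaner argument. The paper's approach, on the other hand, has the minor expository advantage of illustrating once more how $s_{\A,h}$ and $A$ control each other via Lemma \ref{ax2sax}.
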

 \begin{proof}
  \eqref{a4i}$\iff$\eqref{a4ii} follows from Lemma \ref{abab} and the observation that a function is OR exactly when both its Matuszewska indices are finite, whereas \eqref{a4i}$\iff$\eqref{a4iv} follows from Lemma \ref{abab2}.  
  
  For \eqref{a4iv}$\implies$\eqref{a4iii}, notice that if $\mathcal{M}_{*}(s_{\A,h}) > 0$ for some $h\geq 1$, then for $0<\gamma<\mathcal{M}_{*}(s_{\A,h})$ there is $m>0$ and $x_0\in\R_{+}$ such that, for all $\mu > 1$,
  \[ \frac{s_{\A,h}(\mu x)}{s_{\A,h}(x)} \geq m\mu^{\gamma}, \quad \forall x\geq x_0. \]
  From Lemma \ref{ax2sax}, we know that
  \[ \frac{A(\mu x)^h}{A(x/2)^h} \geq \frac{s_{\A,h}(\mu x)}{s_{\A,h}(x)}, \]
  hence
  \[ \frac{A(2\mu x)}{A(x)} \geq m^{1/h} \mu^{\gamma/h}, \quad \forall x\geq x_0/2. \]
  Thus, if $\mu$ is large enough, we have $\liminf\limits_{x\to +\infty} A(\lambda x)/A(x) > 1$ for $\lambda = 2\mu$.  
  
  Finally, to show \eqref{a4iii}$\implies$\eqref{a4i}, assume that \eqref{a4i} is false. Then for every $\eps > 0$ there is a sequence $(n_k)_{k\in\N}$ with $n_k = n_k(\eps) \to +\infty$ as $k\to +\infty$ for which
  \[ \frac{a_{\lfloor (1+\eps)n_k \rfloor}}{a_{n_k}} > k. \]
  This is the same as $ka_{n_k} < a_{\lfloor (1+\eps)n_k\rfloor}$. Since $A(a_{k}) = k+1$, it follows that
  \[ \frac{A(ka_{n_k})}{A(n_k)} < \frac{\lfloor (1+\eps)n_k\rfloor + 1}{n_k + 1} \leq 1+\eps + \frac{1-\eps}{n_k +1}; \]
  that is, for every real number $\lambda > 1$,
  \begin{equation*}
   \frac{A(\lambda n_k)}{A(n_k)} < 1 + \eps + o(1).
  \end{equation*}
  as $k\to +\infty$. Since for every $\eps > 0$ there is such $(n_k)_{k\in\N}$, we conclude that for every $\lambda > 1$ it holds that $\liminf_{x\to +\infty} A(\lambda x)/A(x) = 1$, the negation of which is exactly item \eqref{a4iii}.
 \end{proof}

 \begin{rem}[Criteria for non-bases]\label{nnbss}
  We describe shortly another motivation for PI sequences. In Section 7 of St\"{o}hr \cite{stoh55} it is presented a list of criteria for non-bases, i.e. criteria which if a sequence $\A\subseteq\N$ satisfies it, then it cannot be an additive basis. Among these, we highlight the following two:
  \begin{itemize}
   \item \emph{Criterion \#1}: $A(x) \not\gg x^{\eps}$ for every $\eps>0$.
 
   \item \emph{Criterion \#2}: $\limsup\limits_{n\to+\infty} a_{n+k}/a_n = +\infty$ for some $k\geq 1$.
  \end{itemize}
  When studying bases, it is then natural to focus only on sequences that avoid such criteria. A simple way of doing this would be to consider a condition that simultaneously avoid the above two statements. We may then consider the following questions:
  \begin{enumerate}[(1)]
   \item What is the smallest $f$ for which if $A(n) \gg f(n)$ then $\A$ does not satisfy criterion \#2? \label{quest1}
 
   \item What is the smallest $f$ for which if $\limsup_{n\to+\infty} a_{n+f(n)}/a_n < +\infty$ then $\A$ does not satisfy criterion \#1? \label{quest2}
  \end{enumerate}
 
  It is an exercise to show that, surprisingly enough, the answer to both questions is $f(n) = \Theta(n)$. For question (\ref{quest1}), it has been long known that a sequence $\A\subseteq \N$ with $A(n)=\Theta(n)$ is an additive bases if and only if it satisfies the trivial requirement $\{0,1\}\subseteq \A$; this is indeed equivalent to a celebrated theorem by Schnirelmann (cf. Subsection \ref{subsecA5}).
 
  For question (\ref{quest2}), on the other hand, we may see that taking $f(n) = \Theta(n)$ implies item \eqref{a4ii} of Proposition \ref{chrpi}, therefore we arrive back at PI sequences. This is a much more general condition than $A(n) = \Theta(n)$, but not all bases are necessarily PI (cf. Remark \ref{cnter}). An interesting line of research would be to identify which PI sequences are not bases.
 \end{rem}
 
\subsection{OR\texorpdfstring{$_+$}{\_+} sequences}\label{appA4}
 Finally we arrive at the central point of this appendix, which are sequences that are OR and PI at the same time. The reason behind considering these two conditions simultaneously comes from the following lemma, which is of central importance to this paper.

 \begin{lem}[OR--PI lemma] \label{pilem}
  Let $f:[x_0,+\infty)\to\R_+$ be a positive and locally integrable real function. The following are equivalent:
  \begin{enumerate}[(i)]
   \item $\displaystyle{\int_{x_0}^x \frac{f(t)}{t}\mathrm{d}t = \Theta(f(x))};$
   \item $\displaystyle{0<\mlow(f)\leq\mupp(f)<+\infty ; \phantom{\int_{\alpha}^x}}$
   \item $f$ is $O$-regularly varying and has positive increase.
  \end{enumerate}
 \end{lem}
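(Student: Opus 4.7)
The equivalence (ii) $\iff$ (iii) is immediate from the material cited in the excerpt: by Theorem 2.1.7 of Bingham et al.\ \cite{bingham89}, $f$ is $O$-regularly varying if and only if both Matuszewska indices $\mlow(f)$ and $\mupp(f)$ are finite; and positive increase is by definition $\mlow(f) > 0$. So the substantive content of the lemma is the equivalence of (i) and (iii), which I now plan.

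For (iii) $\implies$ (i), the plan is to separate the two estimates implicit in ``$\Theta$''. For the upper bound $\int_{x_0}^x f(t)/t\,\mathrm{d}t \ll f(x)$, I would use positive increase: pick some $0 < \gamma < \mlow(f)$, so that by the almost-increasing property of $f(t)/t^\gamma$ one has $f(t) \ll (t/x)^\gamma f(x)$ for all sufficiently large $t \leq x$; inserting this gives
\[ \int_{x_0}^x \frac{f(t)}{t}\,\mathrm{d}t \ll \frac{f(x)}{x^\gamma}\int_{x_0}^x t^{\gamma-1}\,\mathrm{d}t \ll \frac{f(x)}{\gamma}. \]
For the lower bound $\int_{x_0}^x f(t)/t\,\mathrm{d}t \gg f(x)$, only $O$-regular variation is needed: since $\mupp(f) < +\infty$, applying the almost-decreasing property of $f(t)/t^\gamma$ (for any $\gamma > \mupp(f)$) with the pair $(t,x)$ yields $f(t) \gg f(x)$ uniformly for $t \in [x/2,x]$, whence restricting the integral to this interval gives a contribution of size $\gg f(x)\log 2$. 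It is worth noting that positive increase is genuinely required for the upper bound: the trivial case $f \equiv 1$ shows that mere $O$-regular variation allows the integral to grow like $\log x$.

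For (i) $\implies$ (iii), set $F(x) := \int_{x_0}^x f(t)/t\,\mathrm{d}t$, which by hypothesis satisfies $F \asymp f$. Since $F$ is absolutely continuous with $F'(x) = f(x)/x$ a.e., and $f \asymp F$, we obtain $F'(x)/F(x) \asymp 1/x$. Integrating this estimate from any fixed large $y$ up to $x \geq y$ yields $\log(F(x)/F(y)) \asymp \log(x/y)$, so there exist constants $0 < \alpha \leq \beta < +\infty$ with
\[ \left(\tfrac{x}{y}\right)^{\alpha} \ll \frac{F(x)}{F(y)} \ll \left(\tfrac{x}{y}\right)^{\beta} \]
uniformly in large $y \leq x$. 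This is exactly the statement that $F$ has finite positive Matuszewska indices; since the Matuszewska indices are preserved under ``$\asymp$'', the same holds for $f$, giving (iii).

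I do not anticipate a serious obstacle here: both directions reduce to Karamata-style manipulations. The only subtle point is the asymmetry in (iii) $\implies$ (i) mentioned above — positive increase controls the upper bound while $O$-regular variation controls the lower — which is precisely why both hypotheses appear simultaneously in the statement.
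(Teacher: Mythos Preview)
Your argument is correct. The paper does not actually prove this lemma: its entire proof is the one-line citation ``Corollary 2.6.2, p.~96 of Bingham et al.\ \cite{bingham89}.'' So your approach is genuinely different in that you supply a self-contained argument rather than deferring to the reference. Your (iii)$\implies$(i) direction is the standard Karamata-type computation, and your (i)$\implies$(iii) via integrating $F'/F \asymp 1/x$ is a clean way to extract the power-type bounds on $F(x)/F(y)$ directly; this buys independence from the machinery in \cite{bingham89} at essentially no cost, since the argument is short. One small point worth making explicit in the write-up: in (i)$\implies$(iii) the hypothesis $F \asymp f$ gives constants $c_1 \le F/f \le c_2$ only for large $x$, so the integration of $F'/F$ should be carried out on $[y,x]$ with $y$ past that threshold---you have this implicitly (``from any fixed large $y$''), but it is the one place a careless reader might stumble.
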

 \begin{proof}
  Corollary 2.6.2, p. 96 of Bingham et al. \cite{bingham89}.
 \end{proof}

 We define an \emph{OR$_{+}$ sequence} to be a sequence satisfying any of the equivalent conditions of the following proposition. The ``$+$'' in OR$_{+}$ is used to indicate the positivity of both Matuszewska indices.
 
 \begin{prop}[OR$_{+}$ sequences] \label{chrorpi}
  Let $\A\subseteq\N$ be a sequence. The following are equivalent:
  \begin{enumerate}[(i)]
   \item $A$ is $O$-regularly varying and has positive increase;$\phantom{\displaystyle{\int_{1}^x}}$
   \item $\displaystyle{\int_{1}^x \frac{A(t)}{t}\mathrm{d}t = \Theta(A(x))}$;
   \item Both $A(2x) = O(A(x))$ and $a_{2n} = O(a_n)$; $\phantom{\displaystyle{\int_{1}^x}}$
   \item $\displaystyle{\int_{1}^x \frac{s_{\A,h}(t)}{t}\mathrm{d}t = \Theta(s_{\A,h}(x)})$ for all or at least some $h\geq 1$.
  \end{enumerate}
 \end{prop}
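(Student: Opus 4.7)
My plan is to derive each of the three equivalences (i)$\iff$(ii), (i)$\iff$(iii), (i)$\iff$(iv) by assembling previously-established tools rather than developing anything new: the OR--PI lemma (Lemma \ref{pilem}) applied to either $A$ or $s_{\A,h}$, the characterizations of OR and PI sequences (Propositions \ref{chrreg} and \ref{chrpi}), and the Matuszewska-index computation of Lemma \ref{abab2}. Since $A$ is non-decreasing and bounded on bounded intervals, local integrability is automatic; the same is true for $s_{\A,h}$, so Lemma \ref{pilem} may be invoked throughout without further verification.

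The equivalence (i)$\iff$(ii) is a direct application of Lemma \ref{pilem} to $f = A$: items (i) and (iii) of that lemma correspond to items (ii) and (i) of Proposition \ref{chrorpi}, respectively. For (i)$\iff$(iii), I would simply combine the characterizations already at hand: Proposition \ref{chrreg}(ii) says $A(2x) = O(A(x))$ is equivalent to $A$ being $O$-regularly varying, while Proposition \ref{chrpi}(ii) says $a_{2n} = O(a_n)$ is equivalent to $A$ having positive increase, so (iii) is precisely the conjunction packaged in (i).

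For (i)$\iff$(iv) I would use Lemma \ref{abab2}, which gives $\mlow(s_{\A,h}) = h \cdot \mlow(A)$ and $\mupp(s_{\A,h}) = h \cdot \mupp(A)$. Hence for every $h \geq 1$, the Matuszewska indices of $s_{\A,h}$ lie in $(0,+\infty)$ if and only if those of $A$ do; i.e., $A$ is OR with positive increase iff $s_{\A,h}$ is, independently of $h$. Applying Lemma \ref{pilem} to $f = s_{\A,h}$ then yields (iv) for every $h \geq 1$; conversely, assuming (iv) for some fixed $h$, the same lemma delivers the OR-plus-positive-increase property of $s_{\A,h}$, which by Lemma \ref{abab2} transfers back to $A$, giving (i). This also handles the ``for all or at least some $h$'' formulation symmetrically.

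There is no genuine obstacle in this proof: the proposition is essentially bookkeeping, with the real analytic content concentrated in Lemma \ref{pilem} (OR--PI) and the index calculation of Lemma \ref{abab2}. The only point that deserves a moment's thought is that the equivalence in (iv) is truly $h$-free, and this is exactly what the \emph{multiplicative} identity $\mlow(s_{\A,h}) = h\cdot \mlow(A)$ (and the corresponding upper version) buys us.
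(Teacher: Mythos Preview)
Your proposal is correct and matches the paper's own treatment: the paper states that the proof ``is just a straightforward combination of Proposition \ref{chrreg}, Proposition \ref{chrpi} and Lemma \ref{pilem}.'' The only cosmetic difference is that for (i)$\iff$(iv) you invoke Lemma \ref{abab2} directly, whereas the paper would cite items (iv) of Propositions \ref{chrreg} and \ref{chrpi} (which themselves rest on Lemma \ref{abab2}); either route is fine.
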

 
 The proof is just a straightforward combination of Proposition \ref{chrreg}, Proposition \ref{chrpi} and Lemma \ref{pilem}.
 
 \begin{rem}[Counterexamples] \label{cnter}
  Almost all common examples of additive bases are OR$_{+}$, such as the sequence of primes and sequences generated by polynomials, which even have regularly varying counting functions. Nonetheless, not all bases are common! For instance, the sequence
  \[ \mathscr{C} := [0,2^{2^{17}}] \cup \left(\bigcup_{k\geq 17} \left\{2^{2^k} + t(k-1) : 0\leq t\leq 2^{2^k}\right\} \cup [k\cdot 2^{2^k}, 2^{2^{k+1}}] \right) \]
  is an example of a $2$-basis, i.e. a basis of order $2$, which is not PI. It is a relatively simple exercise to show that $2\mathscr{C} = \N$; to see that it is not PI, let $n_k$ be such that $c_{n_k} = 2^{2^k}$. One can then show that $n_k \sim 2^{2^k}$ as $k\to +\infty$, and thus $c_{2n_k}/c_{n_k} = k-o(k)$, implying $c_{2n} \neq O(c_{n})$, which by the last lemma yields $\mlow(C) = 0$.
 
  On the other hand, the sequence 
  \[ \mathscr{D}:= \mathcal{P}_3\cup \left\{(2^{2^k})^3+t : 0\leq t\leq (2^{2^k})^2,~k\in\N \right\}, \]
  where $\mathcal{P}_3$ is the sequence of cubes, is not OR. It is a basis, for it contains $\mathcal{P}_3$, which is well-known to be a $9$-basis;\footnote{Wieferich--Kempner theorem (cf. Chapter 2 of Nathanson \cite{nathanson96}).} to see it is not OR, let $n_k = (2^{2^{k}})^3$. It is not hard to show that $D(n_k) = o(n_k^{2/3})$ as $k\to+\infty$, and then
  \[ \frac{D(2n_k)}{D(n_k)} \geq \frac{D(n_k + n_k^{2/3})}{D(n_k)} = 1 + \frac{n_k^{2/3}}{D(n_k)}, \]
  which diverges, implying $D(2x) \neq O(D(x))$ and yielding $\mupp(D) = +\infty$.
 \end{rem}
 
\subsection{An application to additive bases}\label{subsecA5}
 We finish this appendix by showing a relatively general version of what is sometimes loosely referred to in the literature as \emph{Schnirelmann's method}. This refers to the common element between the elementary proofs of the sequence of primes with $\{0,1\}$\footnote{Schnirelmann-Goldbach theorem (cf. Chapter 7 of Nathanson \cite{nathanson96}).} and the sequence of $k$-powers for all $k\geq 1$\footnote{Linnik's solution to Waring's problem (cf. Chapter 2 of Gel'fond \& Linnik \cite{gelfond66}).} constituting additive bases, with ``elementary'' meaning avoiding the complex analytical approach of the Hardy--Littlewood circle method. Although seemingly paradoxically, we believe that our idea is most concisely stated on the context of Vinogradov's form of the circle method. This part is of complete independent interest from the main subject of this paper, although some analogies can certainly be drawn as we shall point out. We now introduce the necessary concepts and notation.

 Fix $\A\subseteq\N$ a sequence. For $\alpha\in\R/\Z$, let $e(\alpha) := e^{2\pi i \alpha}$. This is well-defined, for the value of $e^{2\pi i x}$ for $x\in\R$ depends only on the fractional part of $x$. Consider then the following truncated generating-type function:
 \begin{equation}
  \mathcal{G}^{(n)}_{\A}(\alpha) := \sum_{k \leq n} \mathbbm{1}_{\A}(k) e(k\alpha). \label{genfe}
 \end{equation}
 The functions $\{e(k\alpha)\}_{k\in\Z}$ form an orthonormal basis of $L^2(\R/\Z)$. From this orthonormality relation it follows, similarly to the formal series in \eqref{forms}, 
 \begin{equation}
  r_{\A,h}(n) = \int_{\R/\Z} \mathcal{G}^{(n)}_{\A}(\alpha)^h e(-n\alpha) \mathrm{d}\alpha. \label{mthcrc}
 \end{equation}
 This is the starting point of Vinogradov's form of the circle method. Following mainly Chapters 5 and 8 of Nathanson \cite{nathanson96}, we describe something which may be thought of as a general framework of the method. In both Waring's and Goldbach's ternary representation problem, a similar sort of \emph{ansatz} is employed. The idea is to first consider the following ``averaged'' form of \eqref{genfe}: 
 \begin{equation}
  \widetilde{\mathcal{G}}^{(n)}_{\A}(\alpha) := \sum_{k \leq n} \frac{A(k)}{k+1}e(k\alpha). \label{gtilenfe}
 \end{equation}
 This is similar to what we did in the construction of $\mathcal{S}_\A$. Even though we are not necessarily requiring $\A$ to be OR$_+$, our goal is to show that the OR$_+$ condition fits naturally into this framework. Indeed, the next step is to approximate \eqref{mthcrc} by 
 \begin{equation}
  J_{\A,h}(n) := \int_{\R/\Z} \widetilde{\mathcal{G}}^{(n)}_{\A}(\alpha)^h e(-n\alpha) \mathrm{d}\alpha, \label{jdef}
 \end{equation}
 the so-called \emph{singular integral} in the context of the usual circle method. Again, the orthonormality of $\{e(k\alpha)\}_{k\in\Z}$ implies that 
 \begin{equation}
  J_{\A,h}(n) = \sum_{\substack{k_1+\ldots +k_h = n \\ k_1,\ldots,k_h \geq 0}} \frac{A(k_1)}{k_1+1}\cdots \frac{A(k_h)}{k_h + 1}. \label{orbtt}
 \end{equation}
 When $\A$ is OR$_+$ one can show that $J_{\A,h}(n) = \Theta(A(n)^h/n)$ for all $h\geq 1$; this is exactly equivalent to Lemma \ref{t37} in the main text, which we called the \emph{fundamental lemma}. Finally, one aims to arrive at
 \begin{equation}
  r_{\A,h}(n) = \mathfrak{S}_{\A,h}(n)J_{\A,h}(n) + \text{error}, \label{crcmth}
 \end{equation}
 where $\mathfrak{S}_{\A,h}$  is some arithmetic function and the error term is at most $o(r_{\A,h}(n))$. It is worth noting that \emph{every sequence} $\A\subseteq\N$ has such a representation for $r_{\A,h}$ for every $h\geq 1$; i.e. we can always choose $\mathfrak{S}_{\A,h}$ such that $r_{\A,h}$ satisfies \eqref{crcmth}, the trivial choice being $\mathfrak{S}_{\A,h}(n) = r_{\A,h}(n)/J_{\A,h}(n)$ when the denominator is greater than $0$, and $\mathfrak{S}_{\A,h}(n) = 1$ otherwise. Furthermore, by the way we defined the error term, it is always the case that
 \begin{equation}
  \mathfrak{S}_{\A,h}(n) \sim \frac{r_{\A,h}(n)}{J_{\A,h}(n)} \text{ as } n\to +\infty \text{ through } h\A. \label{sahroj}
 \end{equation}
 
 \begin{rem}[Relation to Main Theorem]
  One way to interpret the conclusion after Theorem \ref{p314} is that, for suitable $\A$, in $\mathcal{S}_{\A}$ one has $\mathfrak{S}_{\omega,h}(n) \uptxt{a.s.}{=} \Theta(1)$.
 \end{rem}
 
 In both Waring's and Goldbach's problem, the idea of the method centers around finding a good description to $\mathfrak{S}_{\A,h}$ (the so-called \emph{singular series}) and controlling the error term. For that aim, one partitions $\R/\Z$ into \emph{major}  and \emph{minor arcs} for each sufficiently large $n$, usually denoted by $\mathfrak{M} = \mathfrak{M}(n)$ and $\mathfrak{m} = \mathfrak{m}(n)$ resp.. This partition is constructed in a way such that the integral in \eqref{mthcrc} over the minor arcs constitutes part of the error term, while over the major arcs one seeks for a representation of $r_{\A,h}$ as in \eqref{crcmth}.
 
 The method as generally employed, however, draws heavily upon the number-theoretical properties of the sequences being considered, demonstrated by the use of Hua's lemma and Weyl's inequality in the case of Waring, and specific estimates of exponential sums over primes together with non-trivial estimates for the error term in Dirichlet's prime number theorem in the case of ternary Goldbach.
 
 If on the other hand we dispense with the precision achieved by the method in the case of highly number-theoretical sequences and focus only on the more relaxed property of just being an additive basis, we find that through OR$_+$ sequences it is possible to marry our description of the circle method with the elementary approach of Schnirelmann's method. To properly state this result, we only need two more things. The first one was already mentioned in Remark \ref{nnbss}.
 
 \theoremstyle{plain}
 \newtheorem*{st}{Schnirelmann's theorem}
 \begin{st}[Theorem 4, p. 8 of Halberstam \& Roth \cite{halberstam83}]
  If a sequence $\A\subseteq\N$ contains $\{0,1\}$ and satisfies $A(x)=\Theta(x)$, then $\A$ is an additive basis.
 \end{st}
 
 The second one is the notion of \emph{stability} in additive bases as described in Section 11.2 of Nathanson \cite{nathanson99}. A sequence $\A\subseteq\N$ is said to be a \emph{stable basis} when $\A$ is a basis and every subsequence $\B\subseteq\A$ with $\{0,1\}\subseteq \B$ and $B(x)= \Theta(A(x))$ is also a basis. Without further ado:
 
 \begin{thm}\label{schnmth}
  Let $\A\subseteq\N$ be an OR$_+$ sequence with $\{0,1\}\subseteq \A$. If there is $\eps > 0$ such that
  \begin{equation}
   \frac{1}{x}\sum_{n\leq x} \mathfrak{S}_{\A,h}(n)^{1+\eps} = O(1) \label{schn}
  \end{equation}
  in \eqref{crcmth} for some $h\geq 1$, then $\A$ is a stable additive basis.
 \end{thm}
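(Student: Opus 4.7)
The plan is to combine H\"older's inequality with Schnirelmann's theorem. Letting $N(x):=|h\A\cap[0,x]|$, the main goal will be to establish $N(x)\gg x$: since $\{0,1\}\subseteq h\A$, Schnirelmann's theorem will then give that $h\A$ is a basis, and hence so is $\A$.

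Since $\A$ is OR$_{+}$, Lemma \ref{t37} (see also \eqref{orbtt}) yields $J_{\A,h}(n)\asymp A(n)^h/n$. Combined with \eqref{sahroj} and the fact that for large $n$ the bound on the error term in \eqref{crcmth} forces $\mathfrak{S}_{\A,h}(n)=0$ whenever $n\notin h\A$, we get $\mathfrak{S}_{\A,h}(n)\asymp r_{\A,h}(n)\,n/A(n)^h$ for all large $n$. H\"older's inequality with exponents $1+\eps$ and $(1+\eps)/\eps$, supported on $h\A\cap[0,x]$, gives
\begin{equation*}
 \sum_{n\leq x}\mathfrak{S}_{\A,h}(n)\leq \left(\sum_{n\leq x}\mathfrak{S}_{\A,h}(n)^{1+\eps}\right)^{\!1/(1+\eps)}\!N(x)^{\eps/(1+\eps)}\ll x^{1/(1+\eps)}\,N(x)^{\eps/(1+\eps)},
\end{equation*}
so everything reduces to the matching lower bound $\sum_{n\leq x}\mathfrak{S}_{\A,h}(n)\gg x$, from which $N(x)\gg x$ follows by raising to the $(1+\eps)/\eps$-th power.

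To obtain this lower bound I would argue dyadically. By Proposition \ref{chrpi} applied to $s_{\A,h}$, there exist $\lambda>1$ and $c>0$ with $s_{\A,h}(x)>(1+c)\,s_{\A,h}(x/\lambda)$ for all large $x$; hence $s_{\A,h}(x)-s_{\A,h}(x/\lambda)\gg s_{\A,h}(x/\lambda)\asymp A(x)^h$ (using OR in the last step). On the block $x/\lambda<n\leq x$ the OR property further gives $n/A(n)^h\asymp x/A(x)^h$, so
\begin{equation*}
 \sum_{n\leq x}\mathfrak{S}_{\A,h}(n)\gg \frac{x}{A(x)^h}\bigl(s_{\A,h}(x)-s_{\A,h}(x/\lambda)\bigr)\gg x,
\end{equation*}
and plugging this back into H\"older yields $N(x)\gg x$ as desired. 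Stability then follows immediately: for any $\B\subseteq\A$ with $\{0,1\}\subseteq\B$ and $B(x)=\Theta(A(x))$, $\B$ is OR$_{+}$, $J_{\B,h}(n)\asymp B(n)^h/n\asymp J_{\A,h}(n)$, and $r_{\B,h}(n)\leq r_{\A,h}(n)$; therefore $\mathfrak{S}_{\B,h}(n)\ll\mathfrak{S}_{\A,h}(n)$, the condition \eqref{schn} is inherited by $\B$, and the argument above applies verbatim.

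The main obstacle is the lower bound on $\sum_{n\leq x}\mathfrak{S}_{\A,h}(n)$, where both halves of the OR$_{+}$ assumption play essential roles: OR flattens the weight $n/A(n)^h$ over a dyadic block, while PI ensures that $s_{\A,h}$ puts a positive proportion of its mass in that block. Once this is in place, the H\"older--Schnirelmann step is routine, and the passage from $\A$ to $\B$ is immediate from the monotonicity $r_{\B,h}\leq r_{\A,h}$.
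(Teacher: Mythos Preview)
Your argument is correct and follows essentially the same route as the paper: a H\"older step over $h\A\cap[0,x]$, a dyadic lower bound exploiting OR$_{+}$, and then Schnirelmann's theorem, with stability via $r_{\B,h}\leq r_{\A,h}$. The only cosmetic differences are that you keep $\mathfrak{S}_{\A,h}$ in the notation throughout (the paper first translates to $r_{\A,h}$ via \eqref{sahroj}) and that your block lower bound uses PI on $s_{\A,h}$ through Proposition~\ref{chrpi}\,(iv), whereas the paper uses PI on $A$ directly together with the tuple-counting inequality $(A(\lambda x)-A(x))^h\leq\sum_{hx<n\leq h\lambda x}r_{\A,h}(n)$.
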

 \begin{proof}
  From equations \eqref{orbtt} and \eqref{sahroj}, the OR$_+$ condition implies that \eqref{schn} is equivalent to $\sum_{n \leq x} (n\cdot r_{\A,h}(n)/A(n)^h)^{1+\eps} = O(x)$. Recall that $\A$ is, in particular, PI. By item \eqref{a4iii} of Proposition \ref{chrpi}, there is $\lambda > 1$ and $\delta > 0$ for which $A(\lambda x)/A(x) > 1+\delta$ for all large $x$, therefore
  \begin{align*}
   A(x)^{(1+\eps)h/\eps} &< \delta^{-(1+\eps)h/\eps} (A(\lambda x) - A(x))^{(1+\eps)h/\eps} \\
   &= \delta^{-(1+\eps)h/\eps} \left(\sum_{x < n \leq \lambda x} \mathbbm{1}_{\A}(n)\right)^{(1+\eps)h/\eps} \\
   &\leq \delta^{-(1+\eps)h/\eps} \left(\sum_{hx < n \leq h\lambda x} r_{\A,h}(n) \right)^{(1+\eps)/\eps}.
  \end{align*}
  We know that the sequence $\A$ is also, in particular, OR. Therefore, by H\"{o}lder's inequality,\footnote{Theorem A in Section 42, p. 175 of Halmos \cite{halmos74}.} since $1 = 1/(1+\eps) + \eps/(1+\eps)$,
  \begin{align*}
   \Bigg(&\sum_{hx < n \leq h\lambda x} r_{\A,h}(n) \Bigg)^{(1+\eps)/\eps} \leq |h\A\cap [hx, h\lambda x]| \left(\sum_{hx < n \leq h\lambda x} r_{\A,h}(n)^{1+\eps}\right)^{1/\eps} \\
   &\leq |h\A\cap [hx, h\lambda x]| \left(\frac{A(h\lambda x)^{h}}{hx} \right)^{(1+\eps)/\eps} \left(\sum_{hx < n \leq h\lambda x} \left(n\frac{r_{\A,h}(n)}{A(n)^h}\right)^{1+\eps} \right)^{1/\eps} \\
   &\ll |h\A\cap [hx, h\lambda x]| \frac{A(x)^{(1+\eps)h/\eps}}{x},
  \end{align*}
  hence $|h\A\cap [0, x]| = \Theta(x)$. By Schnirelmann's theorem, we deduce that $\A$ is a basis. To see that $\A$ is a stable basis, notice that if $\B\subseteq \A$ then $r_{\B,h}(n)\leq r_{\A,h}(n)$, thus $B(x) \asymp A(x)$ implies $\mathfrak{S}_{\B,h}(n) \ll \mathfrak{S}_{\A,h}(n)$. Hence, repeating the same argument for $\B$, since $\{0,1\}\subseteq \B$, we apply Schnirelmann's theorem again.
 \end{proof}
 
 A similar version for asymptotic bases can be deduced from the appropriate variant of Schnirelmann's theorem. A sequence $\A\subseteq\N$ is called an \emph{asymptotic basis} when there is $h\geq 2$ such that $\N\setminus h\A$ is finite, and from Theorem 11.6, p. 366 of Nathanson \cite{nathanson99} one knows that, if $A(n) = \Theta(n)$, then not being contained in some non-trivial arithmetic progression ensures that $\A$ is an asymptotic basis.
 
 \begin{rem}[Applications of Theorem \ref{schnmth}]
  The connection with elementary methods comes from \eqref{sahroj}, as both the proof of Schnirelmann--Goldbach's theorem and Linnik's elementary solution to Waring's problem is achieved by giving non-trivial upper bounds to $r_{\A,h}(n)/s_{\A,h}(n)$ for some large enough $h\geq 2$.
  
  Following Chapter 11 of Nathanson \cite{nathanson99}, let $f\in\Q[x]$ be an integer-valued polynomial (i.e. $f(k)\in \Z$ when $k\in\Z$) with positive leading coefficient. Define $\mathcal{P}_f := \{f(n) : n\in\N\}\cap \N$, the sequence generated by $f$. Since $|\mathcal{P}_f\cap[0,x]| = \Theta(x^{\deg(f)})$, we know that $\mathcal{P}_f$ is OR$_{+}$. To show that it is a stable basis, one proves that there is $h=h_f$ for which $\mathfrak{S}_{\mathcal{P}_f, h}(n) = O(1)$.\footnote{Theorem 11.8, p. 370 of Nathanson \cite{nathanson99}.}
 
  In the case of primes, the weak form of the Prime Number Theorem says that $\pi(x) = \Theta(x/\log(x))$, where $\pi(x) := |\mathbb{P}\cap[0,x]|$, the sequence of primes; hence $\mathbb{P}$ is OR$_{+}$. At the heart of Schnirelmann's proof that $\mathbb{P}\cup\{0,1\}$ forms a stable basis is the fact that $\mathfrak{S}_{\mathbb{P},2}(n) = O(\prod_{p\mid n}(1+p^{-1}))$,\footnote{Theorem 7.2, p. 186 of Nathanson \cite{nathanson96}.} and one can then show that $\mathbb{P}$ satisfies \eqref{schn} with $\eps = 1$. For the reader familiar with algebraic number theory, one can take $\mathbb{P}_K$ to be the set of primes that split completely in a number field $K/\Q$. From Chebotarev's density theorem\footnote{Theorem 13.4 in Chapter VII, p. 545 of Neukirch \cite{neukirchEN}.} one has $\pi_K(x) \asymp \pi(x)$, where $\pi_K(x) := |\mathbb{P}_K\cap [0,x]|$, therefore $\mathbb{P}_K\cup\{0,1\}$ is also a basis.
 \end{rem}
 
 \begin{rem}\label{contra2}
  As an interesting side note, it is possible to show that when $\A$ is OR$_{+}$, we have $\sum_{n\leq x} \mathfrak{S}_{\A,h}(n) = O(x)$ for all $h\geq 1$, which is the same of having $\eps = 0$ in \eqref{schn}. Since this fact is not essential to Theorem \ref{schnmth}, we chose to omit the proof, which is essentially a lengthy calculation.  
 \end{rem}

\bibliographystyle{amsplain}
\bibliography{$HOME/Acad/Writings/_latex/bibliotheca}%
\end{document}